\renewcommand{\phi}{\varphi}
\renewcommand{\rho}{\varrho}
\renewcommand{\epsilon}{\varepsilon}
\tikzset{commutative diagrams/.cd}
\newtheorem{Def}{Definition}[section]
\newenvironment{definition}{\begin{Def} \rm}{\end{Def}}
\newtheorem{lemma}[Def]{Lemma}
\newtheorem{proposition}[Def]{Proposition}
\newtheorem{corollary}[Def]{Corollary}
\newtheorem{theorem}[Def]{Theorem}
\newtheorem{example}[Def]{Example}
\newtheorem{remark}[Def]{Remark}
\newcommand{\komma}{,\hspace{0.3em}}
\newcommand{\id}{\text{\rm id}}
\renewcommand{\leq}{\leqslant}
\renewcommand{\geq}{\geqslant}
\newcommand{\Naturals}{{\mathbb N}}
\newcommand{\Reals}{{\mathbb R}}
\newcommand{\Complexes}{{\mathbb C}}
\newcommand{\Quaternions}{{\mathbb H}}
\newcommand{\notperp}{\mathbin{\not\perp}}
\DeclareMathOperator{\closure}{cl}
\renewcommand{\c}{^\perp}
\newcommand{\cc}{^{\perp\perp}}
\newcommand{\herm}[2]{\left( #1 , #2 \right)}
\newcommand{\lin}[1]{\langle #1\rangle}
\renewcommand{\P}{\mathbf P}
\newcommand{\scmu}{\mathbin{\raisebox{2pt}{$\scriptscriptstyle\diamond$}}}
\newcommand{\zerokernel}[1]{{#1}^{\circ}}
\newcommand{\adj}{^\ast}
\DeclareMathOperator{\kernel}{ker}
\DeclareMathOperator{\image}{im}
\begin{document}

\title{Adjointable maps between linear orthosets}

\author[1]{Jan Paseka}

\author[2]{Thomas Vetterlein}

\affil[1]{\footnotesize Department of Mathematics and Statistics,
Masaryk University \authorcr
Kotl\'a\v rsk\'a 2, 611\,37 Brno, Czech Republic \authorcr
{\tt paseka@math.muni.cz}}

\affil[2]{\footnotesize Institute for Mathematical Methods in Medicine and Data Based Modeling, \authorcr
Johannes Kepler University Linz \authorcr
Altenberger Stra\ss{}e 69, 4040 Linz, Austria \authorcr
{\tt Thomas.Vetterlein@jku.at}}

\date{\today}

\maketitle

\begin{abstract}\parindent0pt\parskip1ex

\noindent Given an (anisotropic) Hermitian space $H$, the collection $\P(H)$ of at most one-dimensional subspaces of $H$, equipped with the orthogonal relation $\perp$ and the zero linear subspace $\{0\}$, is a linear orthoset and up to orthoisomorphism any linear orthoset of rank $\geq 4$ arises in this way. We investigate in this paper the correspondence of structure-preserving maps between Hermitian spaces on the one hand and between the associated linear orthosets on the other hand. Our particular focus is on adjointable maps.

We show that, under a mild assumption, adjointable maps between linear orthosets are induced by quasilinear maps between Hermitian spaces and if the latter are linear, they are adjointable as well. Specialised versions of this correlation lead to Wigner-type theorems; we see, for instance, that orthoisomorphisms between the orthosets associated with at least $3$-dimensional Hermitian spaces are induced by quasiunitary maps.

In addition, we point out that orthomodular spaces of dimension $\geq 4$ can be characterised as irreducible Fr\' echet orthosets such that the inclusion map of any subspace is adjointable. Together with a transitivity condition, we may in this way describe the infinite-dimensional classical Hilbert spaces.

{\it Keywords:} Linear orthoset; Hermitian space; Hilbert space; projective geometry; orthogeometry; semilinear map; quasilinear map; quasiunitary map

{\it MSC:} 46C05, 06C15, 15A04, 51A10, 51F20

\mbox{}\vspace{-2ex}

\end{abstract}

\section{Introduction}
\label{sec:Introduction}

In a well-known way, a linear space $V$ can be reduced to a structure of an apparently simpler type: the projective space associated with $V$ is the collection of one-dimensional subspaces of $V$, equipped with the ternary relation expressing that a subspace is contained in the linear span of two others. Provided that the rank is at least $4$, the structure thus obtained allows the reconstruction of $V$. The question arises how maps between linear spaces are related to maps between the corresponding projective spaces. Answers are given by various versions of the Fundamental Theorem of Projective Geometry. A thorough discussion of this topic can be found in Faure and Fr\" olicher's monograph on ``Modern Projective Geometry'' \cite{FaFr}. One version, expressed in the terminology of \cite{FaFr}, goes as follows: any semilinear map gives rise to a morphism between the associated projective spaces \cite[Proposition 6.3.5]{FaFr}; and any non-degenerate morphism between projective spaces is induced by a semilinear map between the corresponding linear spaces \cite[Theorem 10.1.3]{FaFr}. We note that the two involved scalar skew fields are not in general isomorphic.

The present paper deals with the case when the linear spaces under consideration are equipped with an inner product. By a {\it Hermitian space}, we mean a linear space together with a symmetric and anisotropic sesquilinear form. Hermitian spaces could be described as orthogeometries, that is, as projective spaces that are additionally endowed with an orthogonality relation \cite[Chapter 14]{FaFr}. However, the relation of linear dependence is in this case redundant: the linear span of two vectors is determined by the orthogonality relation. To describe Hermitian spaces, we may hence focus exclusively on the concept of orthogonality.

Realising that Hermitian spaces, and in particular Hilbert spaces, can be reduced to their orthogonality relation, David Foulis and his collaborators once coined the notion of an orthogonality space: a structure based on nothing but a single binary relation, assumed to be symmetric and irreflexive. We deal here with a slight variant of this notion. We define {\it orthosets} in a similarly simple fashion but require them to possess an additional element $0$ that is orthogonal to all elements.  Orthosets fulfilling moreover a certain combinatorial condition are called {\it linear}. The following correspondence holds: any Hermitian space gives rise to a linear orthoset and, provided that the rank is at least $4$, any linear orthoset arises from an essentially uniquely determined Hermitian space \cite{PaVe3}.

We investigate in this paper the interrelations between the maps on both sides of this correspondence. That is, we consider maps between Hermitian spaces on the one hand and maps between orthosets on the other one. In contrast to their structural simplicity, it is not straightforward to decide which sort of maps between orthosets are actually relevant. One could consider orthogonality-preserving maps as a natural choice, cf.\ \cite{PaVe1,PaVe2}. However, the requirement to preserve orthogonality does not lead in any sense to the preservation of linear dependence. Instead, we focus on the condition of adjointability. A map $f \colon X \to Y$ between orthosets is said to be {\it adjointable} if there is a map $g \colon Y \to X$ such that, for any $x \in X$ and $y \in Y$, $f(x) \perp y$ is equivalent to $x \perp g(y)$. Adjointability turns out to be closely related to the linearity of maps. For instance, any linear map between finite-dimensional Hermitian spaces induces an adjointable map between the corresponding orthosets. Our issue is to formulate converse statements.

We proceed as follows. In the subsequent Section~\ref{sec:Linear-spaces}, we review the aforementioned situation for linear spaces and projective spaces, preparing the ground for what follows. In accordance with our definition of an orthoset, we will slightly deviate from the common definition; the projective space associated with a linear space will be assumed to consist of all subspaces spanned by a single vector. The additional zero element that occurs in this way does not affect the overall concept and yet simplifies matters considerably.

From Section~\ref{sec:Hermitian-spaces} on, we investigate Hermitian spaces and their corresponding linear orthosets. We show that adjointable maps between orthosets preserve, in a natural sense, linear dependence and we establish that any adjointable map whose image is not contained in a $2$-dimensional subspace is induced by a quasilinear map. Moreover, we show that a linear map between Hermitian spaces is, in the usual sense, adjointable if and only if so is the induced map between the orthosets.

It turns out that also orthogonality-preserving maps can be conveniently dealt with in the present framework. Indeed, a bijective map $f$ between orthosets that possesses $f^{-1}$ as an adjoint is the same as an orthoisomorphism, that is, a bijection preserving $\perp$ in both directions. Section~\ref{sec:Orthometric-correspondences} is devoted to the interrelation between orthoisomorphisms and quasiunitary maps. In this context, the work of Robert Piziak is of essential importance \cite{Piz1}. Piziak proved that if a quasilinear map between at least $2$-dimensional Hermitian spaces preserves orthogonality, then it actually preserves the Hermitian form up to a factor. We show that Piziak's result holds under weaker assumptions; in particular, it is sufficient to assume the map to be semilinear.

We are also interested in more general maps: partial orthometries are maps \linebreak $f \colon X \to Y$ between orthosets that establish an orthoisomorphism, not necessarily between all of $X$ and all of $Y$ but, between a subspace of $X$ and subspace of $Y$. In this case, however, it makes sense to restrict considerations to the narrower class of orthomodular spaces, which we discuss in Section~\ref{sec:Orthomodular-spaces-Dacey-spaces}.

We include at this place a characterisation of orthomodular spaces. It was shown in \cite{LiVe} that the key property for an orthoset $X$ to arise from an orthomodular space is that there is for every subspace $A$ a so-called Sasaki map from $X$ onto $A$. In the present framework, we may describe orthomodular spaces of dimension $\geq 4$ in a similar fashion, namely, as orthosets that are irreducible, fulfil a certain separation property, and are such that any inclusion map of a subspace is adjointable. Adding the condition that for any $x, y \in X$ there is an orthoautomorphism of $X$ sending $x$ to $y$ and keeping every $z \perp x,y$ fixed, we arrive, as in \cite{LiVe}, at a characterisation of the Hilbert spaces over $\Reals$, $\Complexes$, or $\Quaternions$.

Finally, in Section~\ref{sec:Partial-orthometries}, we consider partial orthometries on the one hand and partial quasiisometries between corresponding orthomodular spaces on the other hand. We note that generally, when we correlate maps between Hermitian spaces $H_1$ and $H_2$, and maps between their corresponding orthosets $\P(H_1)$ and $\P(H_2)$, we include a statement concerning the possible usage of linear rather than quasilinear maps. Namely, we may in several cases replace $H_2$ with a space $H_2'$ that shares with $H_1$ the scalar skew field and is equivalent with $H_2$ in the sense that $\P(H_2)$ and $\P(H_2')$ are orthoisomorphic. We achieve in this way that the representing map from $H_1$ to $H_2'$ may be chosen to be linear.

\section{Linear and projective spaces}
\label{sec:Linear-spaces}

The present paper focuses on the description of Hermitian spaces as orthosets and on the question how the respective structure-preserving maps are correlated. A key result needed in this context is Faure and Fr\" olicher's version of the Fundamental Theorem of Projective Geometry (Theorem~\ref{thm:Fundamental-Theorem-Faure-Froelicher} below). This is why we start in this section a review of the situation for linear spaces without inner product. We will on this occasion also fix our notation and we will recall the basic facts about linear spaces, projective spaces, and maps preserving linear dependence. For further details, we refer the reader to the monograph \cite{FaFr}.

\subsubsection*{Linear spaces}

By a {\it sfield}, we will henceforth mean a skew field (i.e., a division ring). We consider linear spaces over arbitrary sfields. We classify only the finite-dimensional spaces with regard to their dimension and we use the symbol $\infty$ to express that the space is infinite-dimensional.

Various types of linearity-preserving maps between linear spaces have been introduced in the literature; we focus on the following version. For a map $\sigma$ between sfields, we will, in accordance with the common practice, denote the image of an element $\alpha$ under $\sigma$ by $\alpha^\sigma$.


\begin{definition} \label{def:semilinear}
Let $V_1$ be a linear space over the sfield $F_1$ and $V_2$ a linear space over the sfield $F_2$. A map $\phi \colon V_1 \to V_2$ is called {\it semilinear} if (i) $\phi(u+v) = \phi(u) + \phi(v)$ for any $u, v \in V_1$ and (ii) there is a homomorphism $\sigma \colon F_1 \to F_2$ such that $\phi(\alpha u) = \alpha^\sigma \phi(u)$ for any $u \in V_1$ and $\alpha \in F_1$. If $\sigma$ is actually an isomorphism, we call $\phi$ {\it quasilinear}.

Moreover, the {\it rank} of a semilinear map $\phi \colon V_1 \to V_2$ is the dimension of the subspace of $V_2$ spanned by the image of $\phi$.
\end{definition}

Unless a semilinear map $\phi$ is the zero map, the homomorphism in condition (ii) of Definition~\ref{def:semilinear} is uniquely determined. We refer to it as the homomorphism {\it associated with $\phi$}.

\subsubsection*{Projective spaces}

We will employ in this paper the slightly modified version of the notion of a projective space proposed in \cite[Remark 6.2.13]{FaFr}. Instead of the collection of all one-dimensional subspaces of some linear space, we consider the collection of all at most one-dimensional subspaces. The harmless modification turns out to be technically convenient.

\begin{definition}
An {\it irreducible projective space with $0$} is a set $P$ equipped with an operation $\star \colon P \times P \to {\mathcal P}(P)$ and a constant $0$, subject to the subsequent conditions. The elements of $P$ distinct from $0$ are called {\it proper}.
\begin{itemize}

\item[\rm (P1)] For any $a, b \in P$, $\{0, a, b\} \subseteq a \star b$ and $a \star b$ contains a further element if and only if $a$ and $b$ are distinct proper elements.

\item[\rm (P2)] If $c$ and $d$ are distinct proper elements of $a \star b$, then $a \star b = c \star d$.

\item[\rm (P3)] Let $a, b, c, d \in P$ be pairwise distinct proper elements. Then $a \star b \;\cap\; c \star d = \{0\}$ if and only if $a \star c \;\cap\; b \star d = \{0\}$.

\end{itemize}
\end{definition}

It is obvious that an irreducible projective space with $0$ may be identified with an irreducible projective space defined in the usual way. The only difference is the presence of the additional element $0$. We shall shorten the cumbersome expression: in the sequel, we will call an irreducible projective space with $0$ simply a projective space.

We adapt the usual notions in the expected way. In particular, a subset $S$ of a projective space $P$ is a {\it subspace} if $0 \in S$ and $a, b \in S$ implies $a \star b \subseteq S$. In this case, $S$ is likewise viewed as a projective space. For $B \subseteq P$, we denote by $\closure B$ the subspace {\it spanned} by $B$, that is, the smallest subspace of $P$ containing $B$. If $P$ is spanned by a finite set, we call the smallest number $n$ such that $P$ is spanned by an $n$-element set the {\it rank} of $P$. If $P$ is not spanned by a finite set, we say that $P$ has infinite rank, or rank $\infty$.

We define structure-preserving maps between projective spaces in the obvious manner. We note that our choice is in accordance with the procedure in \cite{FaFr}; the subsequently defined projective homomorphisms can be identified with Faure and Fr\" olicher's morphisms \cite[Definition 6.2.1]{FaFr}.

\begin{definition} \label{def:homomorphism-of-projective-spaces}
A map $f \colon P_1 \to P_2$ between projective spaces is called a {\it projective homomorphism} if (i)~for any $a, b, c \in P_1$, $a \in b \star c$ implies $f(a) \in f(b) \star f(c)$ and (ii)~$f(0) = 0$. If in this case $f$ is bijective and also $f^{-1}$ is a projective homomorphism, we call $f$ a {\it projective isomorphism}.

Moreover, the {\it rank} of a projective homomorphism $f \colon P_1 \to P_2$ is the rank of the subspace of $P_2$ that is spanned by the image of $f$.
\end{definition}

Projective spaces are based on the concept of linear dependence and below we will consider structures based on an orthogonality relation. The reason for adding in Definition~\ref{def:homomorphism-of-projective-spaces} the attribute ``projective'' is to avoid confusion about which sort of structure we refer to.

\subsubsection*{Linear vs.\ projective spaces}

Disregarding the case of small dimensions, we may say that the transition from linear to projective spaces does not lead to a loss of structure. The coordinatisation theorem reads here as follows. We denote the linear span of vectors $u_1, \ldots, u_k$ by $\lin{u_1, \ldots, u_k}$.

\begin{theorem}
Let $V$ be a linear space over a sfield $F$. We define
\begin{equation} \label{fml:projective-space}
\P(V) \;=\; \{ \lin u \colon u \in H \}
\end{equation}
and for $u, v \in V$, we put
\begin{equation} \label{fml:line-in-linear-space}
\lin u \star \lin v \;=\; \{ \lin w \colon w \in \lin{u,v} \}.
\end{equation}
Then $\P(V)$, equipped with $\star$ and the zero subspace $\{0\}$, is a projective space. The dimension of $V$ coincides with the rank of $\P(V)$.

Conversely, let $P$ be a projective space of rank $\geq 4$. Then there is a linear space $V$ and a projective isomorphism between $P$ and $\P(V)$.
\end{theorem}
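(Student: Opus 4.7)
The plan is to treat the two halves of the theorem separately: verify the forward direction by unwinding the definitions, and for the converse appeal to the classical fundamental theorem of projective geometry as formulated in \cite{FaFr}.

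For the forward direction, I would verify (P1)--(P3) by direct translation to the linear-algebraic setting. (P1) reduces to the fact that $\lin{u,v}$ contains a further one-dimensional subspace (for instance $\lin{u+v}$) precisely when $u$ and $v$ are linearly independent, i.e.\ when $a=\lin u$ and $b=\lin v$ are distinct proper elements. (P2) amounts to the elementary observation that any two linearly independent vectors in a two-dimensional space span it. For (P3), given pairwise distinct proper elements $a=\lin u,\,b=\lin v,\,c=\lin x,\,d=\lin y$, both $\lin{u,v}$ and $\lin{x,y}$ are two-dimensional, so by the Grassmann formula
\[
\dim(\lin{u,v}+\lin{x,y})+\dim(\lin{u,v}\cap\lin{x,y})=4,
\]
and hence $(a\star b)\cap(c\star d)=\{0\}$ is equivalent to $\dim\lin{u,v,x,y}=4$; the same symmetric condition characterises $(a\star c)\cap(b\star d)=\{0\}$. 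The rank statement then follows from the obvious bijection between bases of $V$ and minimal spanning families of proper elements of $\P(V)$, valid in both the finite and the infinite case.

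The converse is the hard direction and is exactly the coordinatisation theorem of classical projective geometry. Given $P$ of rank $\geq 4$, the Veblen--Young/Desargues framework (which requires precisely the rank hypothesis $\geq 4$) permits the reconstruction of a sfield $F$ from the incidence structure of $P$, together with a linear space $V$ over $F$ and a projective isomorphism $P\to\P(V)$. This is proved in detail in \cite{FaFr}; the only modification of their statement is the harmless addition of the zero element, which corresponds on the linear side to the zero subspace. The main obstacle is of course this coordinatisation step itself, a deep classical argument that I would not attempt to reprove but rather invoke verbatim from \cite{FaFr}.
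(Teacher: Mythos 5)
Your proposal is correct and takes the same route the paper intends: the theorem is stated there without proof, the forward direction being a routine verification of (P1)--(P3) (your Grassmann-formula argument for (P3) and the span/basis correspondence for the rank statement are both sound) and the converse being precisely the classical coordinatisation theorem for Desarguesian projective spaces of rank $\geq 4$ as in \cite{FaFr}. Nothing further is needed.
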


Let us now fix a linear space $V_1$ over the sfield $F_1$ and a linear space $V_2$ over the sfield $F_2$. The question is how semilinear maps from $V_1$ to $V_2$ relate to projective homomorphisms from $\P(V_1)$ to $\P(V_2)$.

\begin{proposition} \label{prop:semilinear-induces-homomorphism}
For a semilinear map $\phi \colon V_1 \to V_2$, we may define the map
\begin{equation} \label{fml:map-induced-by-semilinear-map}
\P(\phi) \colon \P(V_1) \to \P(V_2) \komma \lin u \mapsto \lin{\phi(u)},
\end{equation}
and $\P(\phi)$ is a projective homomorphism.
\end{proposition}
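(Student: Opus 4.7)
The statement factors into two tasks: showing that $\P(\phi)$ is well-defined on projective points and then verifying the two conditions of Definition~\ref{def:homomorphism-of-projective-spaces}. I would organise the argument accordingly.

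First, for well-definedness, I would take $u, u' \in V_1$ with $\lin u = \lin{u'}$ and argue that $\lin{\phi(u)} = \lin{\phi(u')}$. If $u = 0$, both sides are $\{0\}$ because the additivity clause~(i) of Definition~\ref{def:semilinear} forces $\phi(0) = 0$. Otherwise $u' = \alpha u$ for some nonzero $\alpha \in F_1$, and clause~(ii) gives $\phi(u') = \alpha^\sigma \phi(u)$. Since $\sigma \colon F_1 \to F_2$ is a homomorphism of sfields, its kernel is a two-sided ideal in $F_1$ and hence trivial, so $\alpha^\sigma \neq 0$. This yields $\lin{\phi(u')} = \lin{\phi(u)}$, regardless of whether $\phi(u)$ itself happens to be $0$.

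Second, condition~(ii) of Definition~\ref{def:homomorphism-of-projective-spaces} is immediate from $\phi(0) = 0$. For condition~(i), I would start from $\lin a \in \lin b \star \lin c$, which by (\ref{fml:line-in-linear-space}) means $a \in \lin{b,c}$; writing $a = \beta b + \gamma c$ with $\beta, \gamma \in F_1$, two applications of clauses (i) and (ii) of Definition~\ref{def:semilinear} yield
\[
\phi(a) \;=\; \beta^\sigma \phi(b) + \gamma^\sigma \phi(c) \;\in\; \lin{\phi(b),\phi(c)},
\]
hence $\lin{\phi(a)} \in \lin{\phi(b)} \star \lin{\phi(c)} = \P(\phi)(\lin b) \star \P(\phi)(\lin c)$ by (\ref{fml:line-in-linear-space}) again.

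There is no genuine obstacle here; the only point that deserves care is that $\lin{\phi(b),\phi(c)}$ may well collapse to a subspace of dimension less than two, so one should not try to identify $\P(\phi)(\lin b) \star \P(\phi)(\lin c)$ with a concrete line but rather work directly from definition (\ref{fml:line-in-linear-space}). The injectivity-of-$\sigma$ observation in the well-definedness step is the one place where the hypothesis that $\sigma$ is a sfield homomorphism (not merely additive) is used in an essential way.
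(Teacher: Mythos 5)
Your proof is correct and is exactly the direct verification the paper has in mind; the paper itself dismisses the statement with ``This is evident'' and supplies no details. Your two checks (well-definedness via injectivity of the sfield homomorphism $\sigma$, and conditions (i)--(ii) of Definition~\ref{def:homomorphism-of-projective-spaces} via clauses (i)--(ii) of Definition~\ref{def:semilinear}) are precisely what that remark leaves to the reader.
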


\begin{proof}
This is evident.
\end{proof}

Given a semilinear map $\phi$, we call $\P(\phi)$ as defined by~(\ref{fml:map-induced-by-semilinear-map}) the map {\it induced by $\phi$}.

\begin{theorem} \label{thm:Fundamental-Theorem-Faure-Froelicher}
Let $f \colon \P(V_1) \to \P(V_2)$ be a projective homomorphism of rank $\geq 3$. Then $f$ is induced by a semilinear map from $V_1$ to $V_2$.
\end{theorem}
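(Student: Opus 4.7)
The plan is to construct, on a complement of the ``kernel'' of $f$, a semilinear map whose induced projective map recovers $f$, and to extend by zero. First, I would set $N = \{u \in V_1 : f(\lin u) = \{0\}\}$. Using that $f$ preserves the ternary collinearity relation, together with (P1), one checks easily that $N$ is a linear subspace of $V_1$: if $f(\lin u) = \{0\} = f(\lin v)$, then for any $w \in \lin{u,v}$ we have $\lin w \in \lin u \star \lin v$, hence $f(\lin w) \in f(\lin u) \star f(\lin v) = \{0\}$. Fixing any algebraic complement $W$ of $N$, the restriction of $f$ to $\P(W)$ is non-degenerate, so it suffices to build a semilinear map on $W$ and extend it by $0$ on $N$. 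From this point on, I would therefore assume $f(\lin u) \neq \{0\}$ for every nonzero $u \in V_1$.

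Next, I would fix a projective frame. Because the rank of $f$ is at least $3$, there exist linearly independent $u_1, u_2, u_3 \in V_1$ whose images $f(\lin{u_i})$ are three non-collinear proper points of $\P(V_2)$. I choose nonzero representatives $v_i \in f(\lin{u_i})$; these will be the values $\phi(u_i)$. For a generic $u \in V_1$ with $\lin u \neq \lin{u_1}$ and $u + u_1 \neq 0$, the line $\lin u \star \lin{u_1}$ in $\P(V_1)$ maps into the line $f(\lin u) \star \lin{v_1}$ in $\P(V_2)$, and in particular $f(\lin{u+u_1})$ lies on the latter; I define $\phi(u)$ to be the unique vector in $f(\lin u)$ with $\phi(u) + v_1 \in f(\lin{u+u_1})$. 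Degenerate cases (when $u = -u_1$, or $\lin u = \lin{u_1}$) are covered by routing the same construction through $u_2$ or $u_3$, and one checks that the resulting value of $\phi(u)$ is independent of the auxiliary base vector used.

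Additivity $\phi(u+v) = \phi(u) + \phi(v)$ is then verified by a Pasch-type configuration argument, using (P3) applied to the quadrilateral $\lin{u_1}, \lin u, \lin v, \lin{u+v}$: the two diagonal lines meet in the prescribed image point, which forces the sum of the chosen representatives to lie on the image diagonal. Generic configurations yield the identity directly; the degenerate cases (e.g.\ when $u, v, u_1$ are coplanar) are handled by perturbing within the frame $u_1, u_2, u_3$, which again requires rank at least $3$. Once $\phi$ is additive, defining $\sigma \colon F_1 \to F_2$ by $\phi(\alpha u_1) = \alpha^\sigma v_1$ gives an additive map whose multiplicativity follows by running the construction twice along the single line through $\lin{u_1}$.

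The main obstacle, as I see it, is showing that the \emph{same} sfield homomorphism $\sigma$ implements the scalar action of $\phi$ on every other nonzero vector. For $u$ not in $\lin{u_1}$ one compares the two dilation maps by bridging through a third independent direction $u_2$ (or $u_3$): additivity of $\phi$ across the three-vector configuration $u_1, u, u_2$ forces the scalars to agree. This bridging step is exactly where the assumption rank $\geq 3$ is essential; if the image of $f$ lay on a single line, there would be no third direction to couple the scalar actions on distinct lines, and the statement would fail. The remaining steps — checking that $\sigma$ is in fact a homomorphism (not merely additive and multiplicative) and that $\P(\phi) = f$ on all of $\P(V_1)$ — are then straightforward bookkeeping.
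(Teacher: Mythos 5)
The paper itself does not prove this statement; it is quoted from Faure and Fr\"olicher \cite[Theorem 10.1.3]{FaFr}, so your sketch has to be measured against the standard proof of that cited result. Your overall strategy --- split off the kernel $N$, fix a frame $u_1,u_2,u_3$ with independent images, normalise representatives by the condition $\phi(u)+v_1\in f(\lin{u+u_1})$, prove additivity via a (P3)/Pasch configuration, and couple the scalar actions on distinct lines through a third independent direction --- is exactly the classical construction underlying that proof, and your identification of where rank $\geq 3$ is indispensable is correct.

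There is, however, a genuine gap at the very first step of the construction, and it is precisely the point that makes the morphism version of the theorem harder than the bijective one. For $\phi(u)$ to be well defined you need $f(\lin{u_1})$, $f(\lin u)$ and $f(\lin{u+u_1})$ to be three pairwise distinct proper points: if $f(\lin{u+u_1})$ coincides with $f(\lin u)$ or with $f(\lin{u_1})$, then no vector $x\in f(\lin u)$ with $x+v_1\in f(\lin{u+u_1})$ exists, and if $f(\lin u)=f(\lin{u_1})$ the auxiliary line degenerates. You treat only degeneracies of the domain configuration ($u=-u_1$, $\lin u=\lin{u_1}$), not degeneracies produced by $f$ itself. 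A projective homomorphism is merely required to satisfy $a\in b\star c\Rightarrow f(a)\in f(b)\star f(c)$, so nothing in your set-up excludes $f$ collapsing a whole line of $\P(W)$ to a single proper point even though $N\cap W=\{0\}$: each fibre $f^{-1}(\{0,p\})$ is a subspace containing the kernel, and what you need is that it is spanned by the kernel together with at most one further point. That statement is a consequence of the theorem you are proving, so assuming it is circular, and deriving it from (P1)--(P3) and the rank hypothesis is where the bulk of Faure and Fr\"olicher's argument is spent. The same issue resurfaces in your additivity step, where the two image lines of the Pasch configuration must meet in exactly one proper point. Until this non-collapsing property is established, the definition of $\phi$ does not get off the ground.
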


\begin{proof}
See \cite[Theorem 10.1.3]{FaFr}.
\end{proof}

Semilinear maps inducing a certain projective homomorphism are uniquely determined up to a factor.

\begin{lemma} \label{lem:uniqueness-of-semilinear-maps}
Let $\phi \colon V_1 \to V_2$ be a semilinear map of rank $\geq 2$. Let $\psi \colon V_1 \to V_2$ be a further semilinear map. Then $\P(\phi) = \P(\psi)$ if and only if there is a $\kappa \in F_2 \setminus \{0\}$ such that $\psi = \kappa \phi$.

In this case, $\phi$ is quasilinear if and only if so is $\psi$.
\end{lemma}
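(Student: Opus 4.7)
The "if" direction is routine: if $\psi = \kappa\phi$ with $\kappa \neq 0$, then for each $u \in V_1$, $\psi(u) = \kappa\phi(u)$ spans the same line as $\phi(u)$ (or both vanish), so $\P(\psi) = \P(\phi)$.

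For the converse, suppose $\P(\phi) = \P(\psi)$. First I would extract pointwise scalars: for every $u \in V_1$ with $\phi(u) \neq 0$, the equality $\lin{\phi(u)} = \lin{\psi(u)}$ produces a unique $\kappa_u \in F_2 \setminus \{0\}$ with $\psi(u) = \kappa_u \phi(u)$; and if $\phi(u) = 0$, then $\psi(u) = 0$ too, so any scalar works. The goal is to show that $\kappa_u$ can be chosen independent of $u$. This is where the rank hypothesis enters. Pick $u_1, u_2 \in V_1$ with $\phi(u_1)$ and $\phi(u_2)$ linearly independent in $V_2$; such vectors exist because $\phi$ has rank $\geq 2$. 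Applying additivity of both $\phi$ and $\psi$ to $u_1 + u_2$ yields
\[
\kappa_{u_1+u_2}\bigl(\phi(u_1) + \phi(u_2)\bigr) \;=\; \kappa_{u_1}\phi(u_1) + \kappa_{u_2}\phi(u_2),
\]
and linear independence of $\phi(u_1), \phi(u_2)$ forces $\kappa_{u_1} = \kappa_{u_1+u_2} = \kappa_{u_2}$. Call the common value $\kappa$.

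Next I would show $\kappa_u = \kappa$ for every $u$ with $\phi(u) \neq 0$, by a case distinction. If $\phi(u)$ is linearly independent from $\phi(u_1)$, apply the argument above to the pair $u, u_1$ to get $\kappa_u = \kappa_{u_1} = \kappa$. Otherwise $\phi(u)$ is a nonzero scalar multiple of $\phi(u_1)$, hence linearly independent from $\phi(u_2)$, and the same argument applied to $u, u_2$ gives $\kappa_u = \kappa_{u_2} = \kappa$. Thus $\psi = \kappa\phi$ identically.

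For the final clause, let $\sigma$ be the homomorphism associated with $\phi$ and compute
\[
\psi(\alpha u) \;=\; \kappa\phi(\alpha u) \;=\; \kappa\alpha^\sigma\phi(u) \;=\; \bigl(\kappa\alpha^\sigma\kappa^{-1}\bigr)\psi(u)
\]
for $u \in V_1$ and $\alpha \in F_1$, so the homomorphism associated with $\psi$ is $\tau \colon \alpha \mapsto \kappa\alpha^\sigma\kappa^{-1}$. Since conjugation by $\kappa$ is an automorphism of $F_2$, $\tau$ is an isomorphism if and only if $\sigma$ is, giving the quasilinearity equivalence. The only mildly delicate point is the case split in the additivity argument, which is the reason the hypothesis \emph{rank} $\geq 2$ (rather than merely $\phi \neq 0$) is needed; everything else is essentially bookkeeping.
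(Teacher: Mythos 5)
Your proof is correct. For the main assertion the paper simply cites Faure--Fr\"olicher (Theorem 6.3.6 of their monograph) and gives no argument of its own, so your contribution there is a self-contained direct proof; it is the standard one and it works. The key points all check out: the pointwise scalars $\kappa_u$ are well defined because $\lin{\phi(u)}=\lin{\psi(u)}$ and scalars act on the left; the additivity comparison on a pair $u_1,u_2$ with $\phi(u_1),\phi(u_2)$ linearly independent (which exist precisely because the rank is $\geq 2$) forces $\kappa_{u_1}=\kappa_{u_1+u_2}=\kappa_{u_2}$; and the two-case propagation to an arbitrary $u$ with $\phi(u)\neq 0$ (independent of $\phi(u_1)$, or else a multiple of $\phi(u_1)$ and hence independent of $\phi(u_2)$) closes the argument. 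You also correctly handle $\phi(u)=0$, where $\P(\phi)=\P(\psi)$ forces $\psi(u)=0$. For the additional clause your computation is verbatim the paper's: the homomorphism associated with $\kappa\phi$ is $\alpha\mapsto\kappa\alpha^\sigma\kappa^{-1}$, and conjugation by $\kappa$ being an automorphism of $F_2$ gives the equivalence of quasilinearity. The only thing your write-up buys over the paper is independence from the external reference; the only thing it costs is length.
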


\begin{proof}
For the main assertion, see \cite[Theorem 6.3.6]{FaFr}.

To show the additional statement, let $\sigma$ be the sfield homomorphism associated with $\phi$ and let $\psi = \kappa \phi$, where $\kappa \in F_2 \setminus \{0\}$. Then $\psi$ is $\sigma'$-linear, where $\alpha^{\sigma'} = \kappa \alpha^\sigma \kappa^{-1}$. We observe that $\sigma$ is an isomorphism if and only if so is $\sigma'$.
\end{proof}

We wonder under which circumstances a projective homomorphism is induced by a quasilinear rather than just a semilinear map. The following facts hold by \cite[10.1.4, 6.5.2, 6.5.5]{FaFr}; we include the short direct proofs.

\begin{lemma} \label{lem:Fundamental-Theorem-Faure-Froelicher-quasilinearity}
Let $\phi \colon V_1 \to V_2$ be a semilinear map. Assume that, for some $2$-dimensional subspaces $S_1$ of \/ $V_1$ and $S_2$ of \/ $V_2$, $\P(\phi)$ maps $\P(S_1)$ onto $\P(S_2)$. Then $\phi$ is quasilinear.
\end{lemma}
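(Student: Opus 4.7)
The statement says that if the semilinear map $\phi$ carries some $2$-dimensional subspace $S_1$ projectively onto a $2$-dimensional subspace $S_2$, then its associated sfield homomorphism $\sigma \colon F_1 \to F_2$ is in fact an isomorphism. Since $\sigma$ preserves the multiplicative identity, its kernel is a proper two-sided ideal of the sfield $F_1$ and hence trivial, so $\sigma$ is automatically injective. The whole task therefore reduces to showing that $\sigma$ is surjective.

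My first step would be to produce good bases for the two $2$-dimensional subspaces. Since $\P(S_2)$ contains at least three distinct proper elements (for any sfield, a $2$-dimensional space has the lines $\lin{v_1}$, $\lin{v_2}$, $\lin{v_1+v_2}$ pairwise distinct), and each of these lies in the image of $\P(\phi) \restriction \P(S_1)$, I can pick proper elements $\lin{u_1}, \lin{u_2} \in \P(S_1)$ whose images under $\P(\phi)$ are two different proper elements of $\P(S_2)$. Then $\phi(u_1)$ and $\phi(u_2)$ are nonzero and generate distinct lines, hence are linearly independent in $S_2$; because $\phi$ is semilinear, this forces $u_1, u_2$ to be linearly independent in the $2$-dimensional space $S_1$, so $\{u_1,u_2\}$ is a basis of $S_1$ and $\{\phi(u_1), \phi(u_2)\}$ is a basis of $S_2$.

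For surjectivity, fix an arbitrary $\beta \in F_2$ and consider the vector $v = \phi(u_1) + \beta \phi(u_2) \in S_2$, which is nonzero. By assumption there exists a proper $\lin u \in \P(S_1)$ with $\P(\phi)(\lin u) = \lin v$, so some $\kappa \in F_2 \setminus \{0\}$ satisfies
\begin{equation*}
\phi(u) \;=\; \kappa\,\phi(u_1) + \kappa\beta\,\phi(u_2).
\end{equation*}
Writing $u = \alpha_1 u_1 + \alpha_2 u_2$ with $\alpha_1, \alpha_2 \in F_1$ and applying semilinearity gives $\phi(u) = \alpha_1^\sigma \phi(u_1) + \alpha_2^\sigma \phi(u_2)$. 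Comparing coefficients in the basis $\{\phi(u_1), \phi(u_2)\}$ yields $\alpha_1^\sigma = \kappa$ and $\alpha_2^\sigma = \kappa\beta$. As $\kappa \neq 0$ we have $\alpha_1 \neq 0$, and therefore
\begin{equation*}
\beta \;=\; \kappa^{-1}(\kappa\beta) \;=\; (\alpha_1^\sigma)^{-1} \alpha_2^\sigma \;=\; (\alpha_1^{-1}\alpha_2)^\sigma,
\end{equation*}
so $\beta \in \image \sigma$. Since $\beta$ was arbitrary, $\sigma$ is surjective, hence an isomorphism, and $\phi$ is quasilinear.

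The argument is essentially a direct coordinate computation, so I do not anticipate a serious obstacle; the only point that requires a moment of care is the initial selection of $u_1, u_2$, where one has to exploit the fact that a $2$-dimensional projective space has at least three proper points (which holds over every sfield, including $\mathbb F_2$) in order to be sure that $\phi(u_1)$ and $\phi(u_2)$ can be arranged to be linearly independent rather than merely nonzero.
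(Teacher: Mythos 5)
Your proof is correct and follows essentially the same route as the paper: pick a basis of $S_1$ whose image is a basis of $S_2$, use surjectivity of $\P(\phi)|_{\P(S_1)}$ to write an arbitrary $\beta \in F_2$ as a ratio of $\sigma$-images of coordinates, and conclude that $\sigma$ is onto. The only difference is that you justify the choice of basis more carefully (the paper simply asserts that the image of any basis of $S_1$ is a basis of $S_2$), which is a harmless refinement.
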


\begin{proof}
Let $\sigma \colon F_1 \to F_2$ be the homomorphism associated with $\phi$. We have to show that $\sigma$ is surjective. Let $\{ u, v \}$ be a basis of $S_1$. Then $\{ \phi(u), \phi(v) \}$ is a basis of $S_2$. Let $\delta \in F_2$. Then there are $\alpha, \beta \in F_1$ and $\gamma \in F_2 \setminus \{0\}$ such that $\phi(\alpha u + \beta v) = \gamma(\delta \phi(u) + \phi(v))$. It follows $\alpha^\sigma = \gamma \delta$ and $\beta^\sigma = \gamma$, hence $\delta = (\beta^{-1} \alpha)^\sigma$.
\end{proof}

\begin{theorem} \label{thm:Fundamental-Theorem-Faure-Froelicher-quasilinearity}
Let $f \colon \P(V_1) \to \P(V_2)$ be a projective homomorphism of rank $\geq 3$. The following are equivalent:
\begin{itemize}

\item[\rm (a)] $f$ is induced by a quasilinear map.

\item[\rm (b)] For any $u, v \in V_1$,
\begin{equation} \label{fml:Fundamental-Theorem-Faure-Froelicher-quasilinearity}
f(\lin u \star \lin v) \;=\; f(\lin u) \star f(\lin v).
\end{equation}
\item[\rm (c)] There are non-zero vectors $u, v \in V_1$ such that $f(\lin u) \neq f(\lin v)$ and~{\rm (\ref{fml:Fundamental-Theorem-Faure-Froelicher-quasilinearity})} holds.

\end{itemize}
\end{theorem}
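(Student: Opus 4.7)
The plan is to prove the cycle (a) $\Rightarrow$ (b) $\Rightarrow$ (c) $\Rightarrow$ (a). The first two implications are essentially book-keeping; the real content is in (c) $\Rightarrow$ (a), where I intend to combine the Fundamental Theorem (Theorem~\ref{thm:Fundamental-Theorem-Faure-Froelicher}) with Lemma~\ref{lem:Fundamental-Theorem-Faure-Froelicher-quasilinearity}.

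For (a) $\Rightarrow$ (b), suppose $f = \P(\phi)$ for a quasilinear $\phi \colon V_1 \to V_2$, with associated sfield isomorphism $\sigma$. By definition~(\ref{fml:line-in-linear-space}) and~(\ref{fml:map-induced-by-semilinear-map}), $f(\lin u \star \lin v) = \{\lin{\phi(w)} \colon w \in \lin{u,v}\}$. Additivity together with surjectivity of $\sigma$ gives $\phi(\lin{u,v}) = \lin{\phi(u),\phi(v)}$: the inclusion $\subseteq$ holds for any semilinear map, while for $\supseteq$ an arbitrary $\gamma \phi(u) + \delta \phi(v) \in \lin{\phi(u),\phi(v)}$ can be written as $\phi(\alpha u + \beta v)$ for $\alpha, \beta \in F_1$ with $\alpha^\sigma = \gamma$, $\beta^\sigma = \delta$. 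Hence $f(\lin u \star \lin v) = \lin{\phi(u)} \star \lin{\phi(v)} = f(\lin u) \star f(\lin v)$.

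The implication (b) $\Rightarrow$ (c) is immediate from the rank hypothesis: since the image of $f$ spans a subspace of rank $\geq 3$, there exist non-zero $u, v \in V_1$ such that $f(\lin u) \neq f(\lin v)$, and~(\ref{fml:Fundamental-Theorem-Faure-Froelicher-quasilinearity}) holds by assumption.

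For (c) $\Rightarrow$ (a), Theorem~\ref{thm:Fundamental-Theorem-Faure-Froelicher} provides a semilinear map $\phi \colon V_1 \to V_2$ such that $f = \P(\phi)$. Choose $u, v$ witnessing (c). Since $\lin{\phi(u)} = f(\lin u) \neq f(\lin v) = \lin{\phi(v)}$ and both are proper, $\phi(u)$ and $\phi(v)$ are linearly independent; similarly $u$ and $v$ are linearly independent, so $S_1 = \lin{u,v}$ and $S_2 = \lin{\phi(u),\phi(v)}$ are both $2$-dimensional. Then $\P(S_1) = \lin u \star \lin v$ and $\P(S_2) = \lin{\phi(u)} \star \lin{\phi(v)}$, and (c) reads $\P(\phi)(\P(S_1)) = \P(S_2)$. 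Lemma~\ref{lem:Fundamental-Theorem-Faure-Froelicher-quasilinearity} now yields that $\phi$ is quasilinear, completing the cycle.

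The only delicate point is verifying that hypothesis (c), which asserts equality of two sets only on a single projective line, is strong enough to trigger Lemma~\ref{lem:Fundamental-Theorem-Faure-Froelicher-quasilinearity}; once the two $2$-dimensional subspaces are identified on either side, the lemma does all the work. No obstacle is expected beyond carefully matching the projective lines $\lin u \star \lin v$ with the projectivisations $\P(S_i)$.
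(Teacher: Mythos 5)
Your cycle (a) $\Rightarrow$ (b) $\Rightarrow$ (c) $\Rightarrow$ (a) is exactly the paper's proof: the paper also invokes Theorem~\ref{thm:Fundamental-Theorem-Faure-Froelicher} to get a semilinear $\phi$ with $f=\P(\phi)$ and then Lemma~\ref{lem:Fundamental-Theorem-Faure-Froelicher-quasilinearity} for (c) $\Rightarrow$ (a), leaving (a) $\Rightarrow$ (b) as ``readily checked''; your argument via surjectivity of $\sigma$ is the correct way to fill that in.

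However, the ``delicate point'' you flag and then wave away is a genuine gap. In (c) $\Rightarrow$ (a) you assert that $f(\lin u)\neq f(\lin v)$ ``and both are proper'', but properness is not part of hypothesis (c) and does not follow from it: since $f(\lin u)\neq f(\lin v)$ at least one of the two is proper, but the other may be $\{0\}$, in which case $S_2=\lin{\phi(u),\phi(v)}$ is only $1$-dimensional and Lemma~\ref{lem:Fundamental-Theorem-Faure-Froelicher-quasilinearity} cannot be applied. This is not a presentational quibble --- (c) as literally stated does not imply (a). Take $F_1=F_2=\mathbb F_p(t)$ with the non-surjective endomorphism $\sigma(\alpha)=\alpha^p$, $V_1=F_1^4$, $V_2=F_1^3$, and $\phi(x_1,\dots,x_4)=(x_1^\sigma,x_2^\sigma,x_3^\sigma)$. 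This $\phi$ is semilinear of rank $3$ and not quasilinear, so by Lemma~\ref{lem:uniqueness-of-semilinear-maps} no quasilinear map induces $f=\P(\phi)$; yet with $u=e_1$, $v=e_4$ one computes $f(\lin u\star\lin v)=\{\{0\},\lin{\phi(u)}\}=f(\lin u)\star f(\lin v)$ and $f(\lin u)\neq f(\lin v)$, so (c) holds. The paper's own one-line proof of (c) $\Rightarrow$ (a) glosses over the same point, so this is really a defect of the statement: (c) should additionally require $f(\lin u)$ and $f(\lin v)$ to be proper (equivalently, distinct and both nonzero). With that strengthening your argument is complete --- $u,v$ and $\phi(u),\phi(v)$ are then each linearly independent, (\ref{fml:Fundamental-Theorem-Faure-Froelicher-quasilinearity}) says $\P(\phi)$ maps $\P(S_1)$ onto $\P(S_2)$, and the Lemma applies --- and your (b) $\Rightarrow$ (c) step already produces such witnesses (rank $\geq 3$ gives two distinct proper elements in the image), as do all later applications of the theorem, so nothing downstream is harmed.
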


\begin{proof}
By Theorem~\ref{thm:Fundamental-Theorem-Faure-Froelicher}, $f = \P(\phi)$ for some semilinear map $\phi$. We readily check that (a) implies (b), and (b) implies trivially (c). Furthermore, (c) implies (a) by Lemma~\ref{lem:Fundamental-Theorem-Faure-Froelicher-quasilinearity}.
\end{proof}

\begin{corollary} \label{cor:Fundamental-Theorem-Faure-Froelicher-1}
Let $f \colon \P(V_1) \to \P(V_2)$ be a projective homomorphism. Assume that there are at least $3$-dimensional subspaces $Z_1$ of $V_1$ and $Z_2$ of $V_2$ such that $f$ establishes a projective isomorphism between $\P(Z_1)$ and $\P(Z_2)$. Then $f$ is induced by a quasilinear map.
\end{corollary}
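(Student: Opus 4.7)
My plan is to deduce this from Theorem~\ref{thm:Fundamental-Theorem-Faure-Froelicher-quasilinearity} by verifying its hypotheses. Since by assumption $f$ maps $\P(Z_1)$ onto $\P(Z_2)$, and $Z_2$ has dimension at least $3$, the image of $f$ spans a subspace of $\P(V_2)$ of rank at least $3$. Hence $f$ is a projective homomorphism of rank $\geq 3$, and the theorem is applicable provided we can check, say, condition (c).

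To that end, pick any two linearly independent vectors $u, v \in Z_1$, which is possible because $\dim Z_1 \geq 3 \geq 2$. Since $f$ restricts to a bijection $\P(Z_1) \to \P(Z_2)$, we have $f(\lin u) \neq f(\lin v)$. It remains to show $f(\lin u \star \lin v) = f(\lin u) \star f(\lin v)$. The inclusion from left to right is immediate from $f$ being a projective homomorphism. For the reverse inclusion, note that the line $f(\lin u) \star f(\lin v)$ lies entirely in $\P(Z_2)$, since $\P(Z_2)$ is a subspace of $\P(V_2)$ containing $f(\lin u)$ and $f(\lin v)$. Given any $c$ in this line, since $f^{-1}\colon \P(Z_2) \to \P(Z_1)$ is itself a projective homomorphism, we obtain $f^{-1}(c) \in f^{-1}(f(\lin u)) \star f^{-1}(f(\lin v)) = \lin u \star \lin v$, so that $c \in f(\lin u \star \lin v)$.

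Thus condition (c) of Theorem~\ref{thm:Fundamental-Theorem-Faure-Froelicher-quasilinearity} is satisfied, and the desired conclusion follows. There is no real obstacle here; the work has been done in the preceding lemma and theorem, and the only point requiring care is the observation that a projective isomorphism preserves lines in both directions, which is what lets the line identity be lifted from $\P(Z_1)$ to $\P(V_1)$.
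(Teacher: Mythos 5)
Your proposal is correct and follows exactly the route the paper intends: the paper's proof is the one-line remark that the corollary is ``clear from Theorem~\ref{thm:Fundamental-Theorem-Faure-Froelicher-quasilinearity}'', and you have simply spelled out the verification of its hypotheses (rank $\geq 3$ via $\P(Z_2)$, and condition (c) via two independent vectors of $Z_1$ together with the fact that the inverse of the restricted isomorphism is again a projective homomorphism). Nothing is missing.
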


\begin{proof}
This is clear from Theorem~\ref{thm:Fundamental-Theorem-Faure-Froelicher-quasilinearity}.
\end{proof}

The following corollary contains the (common version of the) Fundamental Theorem of Projective Geometry; see, e.g., {\rm \cite[Section III.1]{Bae}}. 

\begin{corollary} \label{cor:Fundamental-Theorem-Faure-Froelicher-2}
Let $V_1$ and $V_2$ be at least $3$-dimensional and let $f \colon \P(V_1) \to \P(V_2)$ a bijection such that~{\rm (\ref{fml:Fundamental-Theorem-Faure-Froelicher-quasilinearity})} holds for any $u, v \in V_1$. Then $f$ is induced by a quasilinear bijection.
\end{corollary}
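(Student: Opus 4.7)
The plan is to reduce the corollary to Theorem~\ref{thm:Fundamental-Theorem-Faure-Froelicher-quasilinearity} and then upgrade the resulting quasilinear inducer to a bijection. The only ingredient missing from direct application of that theorem is the verification that $f$ is a projective homomorphism. Preservation of collinearity is immediate, since $a \in b \star c$ forces $f(a) \in f(b \star c) = f(b) \star f(c)$ by the hypothesis, so it suffices to check $f(0) = 0$. I would do this by picking a non-zero $v \in V_1$ with $f(\lin v) \neq 0$, which exists since $f$ is a bijection: the hypothesis together with $\lin 0 \star \lin v = \{0, \lin v\}$ gives $f(0) \star f(\lin v) = \{f(0), f(\lin v)\}$, a two-element set, whereas if $f(0)$ were a proper element distinct from $f(\lin v)$, property (P1) would force a further element into $f(0) \star f(\lin v)$.

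Once $f$ is a projective homomorphism, it has rank $\dim V_2 \geq 3$, so Theorem~\ref{thm:Fundamental-Theorem-Faure-Froelicher-quasilinearity} produces a quasilinear map $\phi \colon V_1 \to V_2$ with $f = \P(\phi)$. To conclude, I would verify that $\phi$ itself is bijective. For injectivity, $\phi(u) = 0$ yields $f(\lin u) = 0 = f(0)$, whence $\lin u = 0$ and thus $u = 0$ by injectivity of $f$; additivity of $\phi$ then upgrades trivial kernel to injectivity. For surjectivity, note that the image of $\phi$ is an $F_2$-subspace of $V_2$: the associated sfield homomorphism $\sigma$ is surjective (this is precisely what quasilinearity adds to semilinearity), so $\beta \phi(u) = \phi(\sigma^{-1}(\beta)\, u) \in \image \phi$ for every $\beta \in F_2$. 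Since $f = \P(\phi)$ is surjective onto $\P(V_2)$, the image of $\phi$ contains a scalar representative of every line of $V_2$ and therefore, being an $F_2$-subspace, coincides with $V_2$.

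No step is technically deep; the substantive work is already done by the fundamental theorem and its quasilinear refinement. The tasks here are the bookkeeping identification of $f(0) = 0$ from the raw set-equality hypothesis, and the observation that quasilinearity of $\phi$, rather than mere semilinearity, is exactly what is needed to promote a projective surjection to a linear surjection.
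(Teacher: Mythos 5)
Your proposal is correct and follows essentially the same route as the paper: establish $f(0)=0$ to see that $f$ is a projective homomorphism, invoke Theorem~\ref{thm:Fundamental-Theorem-Faure-Froelicher-quasilinearity} to obtain a quasilinear inducer $\phi$, and deduce bijectivity of $\phi$ from that of $\P(\phi)$. The only differences are cosmetic: the paper gets $f(0)=0$ from $f(\lin 0\star\lin 0)=f(\lin 0)\star f(\lin 0)$ rather than from $\lin 0\star\lin v$, and it leaves the final step ``$\P(\phi)$ bijective $\Rightarrow$ $\phi$ bijective'' implicit where you correctly spell it out.
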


\begin{proof}
We have that $f(\lin 0) = \lin 0$, because $\{ f(\lin 0) \} = f(\{\lin 0\}) = f(\lin 0 \star \lin 0) = f(\lin 0) \star f(\lin 0) = \{f(\lin 0), \lin 0\}$. Hence $f$ is a projective homomorphism and by Theorem~\ref{thm:Fundamental-Theorem-Faure-Froelicher-quasilinearity}, $f$ is induced by a quasilinear map $\phi$. As $\P(\phi)$ is bijective, so is $\phi$.
\end{proof}

We can clearly make a quasilinear map into a linear one by replacing the scalar sfield of the codomain space with an isomorphic one. We provide the details for later reference.

\begin{lemma} \label{lem:quasilinear-to-linear}
Let $\phi \colon V_1 \to V_2$ be a quasilinear map and let $f = \P(\phi)$. Then there is a linear space $V_2'$ over $F_1$ and a projective isomorphism $t \colon \P(V_2) \to \P(V_2')$ such that $t \circ f$ is induced by a linear map.
\end{lemma}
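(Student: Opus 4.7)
The plan is to build $V_2'$ by retaining the additive group and underlying set of $V_2$ but twisting the scalar action through the isomorphism $\sigma \colon F_1 \to F_2$ associated with $\phi$. Concretely, I would let $V_2' = V_2$ as an abelian group and define a new scalar multiplication $\scmu \colon F_1 \times V_2 \to V_2$ by $\alpha \scmu v \;=\; \alpha^\sigma v$, where the right-hand side is the original $F_2$-action on $V_2$. Because $\sigma$ is a sfield isomorphism, the vector space axioms for $V_2'$ over $F_1$ follow immediately from those of $V_2$ over $F_2$.

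The next step is to observe that the projective spaces $\P(V_2)$ and $\P(V_2')$ coincide, not just up to isomorphism but on the nose, so that $t$ can be taken to be the identity map. For a non-zero $u \in V_2$, the one-dimensional $F_2$-subspace $\{\alpha u : \alpha \in F_2\}$ of $V_2$ equals the one-dimensional $F_1$-subspace $\{\alpha^\sigma u : \alpha \in F_1\}$ of $V_2'$, since $\sigma$ is a bijection from $F_1$ onto $F_2$. The same argument shows that for $u, v \in V_2$, the $F_2$-linear span $\lin{u,v}$ agrees with the $F_1$-linear span in $V_2'$, so the operation $\star$ defined via (\ref{fml:line-in-linear-space}) is the same on both sides. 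Hence $\P(V_2) = \P(V_2')$ as projective spaces, and the identity map $t$ is trivially a projective isomorphism.

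It remains to check that $\phi$, viewed as a map $V_1 \to V_2'$, is linear with respect to the scalar field $F_1$ common to both spaces. Additivity is inherited from the semilinearity of $\phi$. For the scalar action, given $\alpha \in F_1$ and $u \in V_1$, we compute
\[
\phi(\alpha u) \;=\; \alpha^\sigma \phi(u) \;=\; \alpha \scmu \phi(u),
\]
which is exactly $F_1$-linearity in $V_2'$. Consequently, $t \circ f = f = \P(\phi)$ is induced by the linear map $\phi \colon V_1 \to V_2'$, as required.

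There is no genuine obstacle here; the entire argument is a bookkeeping exercise in restricting/transporting the scalar action along $\sigma^{-1}$. The only point requiring mild care is to confirm that the underlying sets of one-dimensional subspaces and of two-element spans literally coincide in $V_2$ and $V_2'$, so that a separate construction of $t$ (other than the identity) is unnecessary. Once this is observed, linearity of $\phi$ over $F_1$ is immediate from the definition of the twisted action.
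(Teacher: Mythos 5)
Your proposal is correct and follows essentially the same route as the paper: both twist the scalar action of $V_2$ along $\sigma$ to obtain $V_2'$ and note that $\phi$ becomes $F_1$-linear into $V_2'$. The only cosmetic difference is that you take $t$ to be the identity after observing $\P(V_2)=\P(V_2')$ on the nose, while the paper writes $t=\P(\tau)$ for the $\sigma^{-1}$-linear identity map $\tau\colon V_2\to V_2'$, which amounts to the same thing.
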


\begin{proof}
Let $\sigma \colon F_1 \to F_2$ be the isomorphism associated with $\phi$. We redefine the linear structure of $V_2$ as follows: the addition of vectors remains unmodified and the scalar multiplication is defined by $\alpha \scmu u = \alpha^\sigma u$ for $\alpha \in F_1$ and $u \in V_2$. Let us call the resulting linear space $V_2'$.

We observe that $\tau \colon V_2 \to V_2' \komma u \mapsto u$ is $\sigma^{-1}$-linear and consequently $\tau \circ \phi$ is linear. Moreover, $t = \P(\tau)$ is a projective isomorphism. Finally, $t \circ f = \P(\tau \circ \phi)$.
\end{proof}

\section{Hermitian spaces and linear orthosets}
\label{sec:Hermitian-spaces}

So far we have considered semilinear maps on the one hand and projective homomorphisms on the other hand. We shall now consider the case when the linear spaces are equipped with an inner product. The question arises whether the correspondence described in Proposition~\ref{prop:semilinear-induces-homomorphism} and Theorem~\ref{thm:Fundamental-Theorem-Faure-Froelicher} for linear spaces possesses a version for Hermitian spaces. We shall see that this is indeed the case, provided that the structure-preserving maps are suitably chosen.

\subsubsection*{Hermitian spaces}

By a {\it $\ast$-sfield}, we mean a sfield $F$ equipped with an involutory antiautomorphism $\ast \colon F \to F$. Let $H$ be a linear space over the $\ast$-sfield $F$. A \emph{Hermitian form} on $H$ is a symmetric $\ast$-sesquilinear form, that is, a map $\herm{\cdot}{\cdot} \colon H \times H \to F$ such that, for any $u, v, w \in H$ and $\alpha, \beta \in F$,
\begin{align*}
& \herm{\alpha u + \beta v}{w} \;=\; \alpha \herm{u}{w} + \beta \herm{v}{w}, \\
& \herm{w}{\alpha u + \beta v} \;=\;
                               \herm{w}{u} \alpha^\ast + \herm{w}{v} \beta^\ast, \\
& \herm{u}{v} \;=\; \herm{v}{u}^\ast.
\end{align*}
We additionally assume all Hermitian forms to be \emph{anisotropic}, that is, $\herm{u}{u} = 0$ implies $u = 0$. Endowed with an (anisotropic) Hermitian form, $H$ is referred to as a \emph{Hermitian space} over $F$.

A Hermitian space $H$ comes with a natural orthogonality relation: for two vectors $u, v \in H$, we put $u \perp v$ if $\herm u v = 0$. We tacitly assume that there are at most countably many mutually orthogonal vectors.

\begin{lemma} \label{lem:orthogonal-basis}
Any finite-dimensional Hermitian space possesses an orthogonal basis.
\end{lemma}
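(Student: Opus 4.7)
The plan is to proceed by induction on the dimension $n$ of the Hermitian space $H$. The cases $n = 0$ and $n = 1$ are trivial: the empty set and, respectively, any singleton consisting of a non-zero vector, form an orthogonal basis.

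For the inductive step, assume the result holds for Hermitian spaces of dimension $n-1$, and let $\dim H = n \geq 2$. I would pick any non-zero vector $u \in H$; by anisotropy, $\herm u u \neq 0$, so this scalar is invertible in $F$. The key construction is the orthogonal projection onto the line $\lin u$: for any $v \in H$, define $\alpha = \herm v u \cdot \herm u u^{-1}$ and set $w = v - \alpha u$. A direct computation using the sesquilinearity shows $\herm w u = \herm v u - \alpha \herm u u = 0$, so $w \in u\c$, where $u\c = \{x \in H : \herm x u = 0\}$. Hence $H = \lin u + u\c$. Moreover, $\lin u \cap u\c = \{0\}$, for if $\beta u \in u\c$ then $0 = \herm{\beta u}{u} = \beta \herm u u$, forcing $\beta = 0$ by anisotropy. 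Therefore $H = \lin u \oplus u\c$, and $\dim u\c = n-1$.

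The subspace $u\c$, equipped with the restriction of $\herm{\cdot}{\cdot}$, is itself an anisotropic Hermitian space over $F$. By the induction hypothesis, it admits an orthogonal basis $\{u_2, \ldots, u_n\}$. Adjoining $u_1 = u$ yields an orthogonal basis $\{u_1, u_2, \ldots, u_n\}$ of $H$, completing the induction.

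No step here is truly difficult; the only point that genuinely uses the hypotheses is the invocation of anisotropy to ensure both that $\herm u u \neq 0$ (so that the projection coefficient $\alpha$ is well-defined) and that the intersection $\lin u \cap u\c$ is trivial. Everything else is a standard Gram–Schmidt-style argument adapted to the possibly non-commutative, involutive scalar sfield.
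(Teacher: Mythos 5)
Your proof is correct and rests on the same idea as the paper's: Gram--Schmidt orthogonalisation, with anisotropy guaranteeing that $\herm{u}{u}$ is invertible so the projection coefficients exist. The paper merely packages it as an explicit recursion $e_k = b_k - \sum_{i<k} \herm{b_k}{e_i}\herm{e_i}{e_i}^{-1} e_i$ on a fixed basis rather than as your induction on dimension via the splitting $H = \lin{u} \oplus u\c$, which is an inessential difference in organisation.
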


\begin{proof}
Let $\{ b_1, \ldots, b_n \}$ be a basis of the Hermitian space $H$. For $k = 1, \ldots, n$, we successively define $e_k = b_k - \sum_{i=1}^{k-1} \herm{b_k}{e_i} \herm{e_i}{e_i}^{-1} e_i$. We readily check that $\{ e_1, \ldots, e_n \}$ is an orthogonal basis.
\end{proof}

For any $A \subseteq H$, $A\c = \{ u \in H \colon u \perp v \text{ for all } v \in A \}$ is a linear subspace of $H$. The assignment $A \mapsto A\cc$ defines a closure operation on $H$ and a subspace $S$ of $H$ such that $S = S\cc$ is called {\it orthoclosed}. We note that, for a finite set $A \subseteq H$, $A\cc$ is the linear span of $A$. Furthermore, a subspace $S$ of $H$ is called {\it splitting} if $H$ is the direct sum of $S$ and $S\c$. Any splitting subspace is orthoclosed but the converse does not hold in general.

\begin{lemma} \label{lem:splitting-subspaces}
Let $S$ be a finite-dimensional subspace of a Hermitian space $H$. Then $S$ is splitting.
\end{lemma}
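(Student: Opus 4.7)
The plan is to carry out a Gram-Schmidt-style orthogonal projection, using Lemma~\ref{lem:orthogonal-basis}. Since $S$ is finite-dimensional, it is itself a Hermitian space, so we may fix an orthogonal basis $e_1, \ldots, e_n$ of $S$. By anisotropy, $\herm{e_i}{e_i} \neq 0$ for each $i$, so the inverses $\herm{e_i}{e_i}^{-1}$ exist in $F$.

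For an arbitrary $u \in H$, I would set
\[
s \;=\; \sum_{i=1}^n \herm{u}{e_i} \herm{e_i}{e_i}^{-1} e_i \;\in\; S,
\]
and show $u - s \in S\c$. Using left-linearity of the form together with $\herm{e_i}{e_j} = 0$ for $i \neq j$, all cross-terms vanish and one computes $\herm{u-s}{e_j} = \herm{u}{e_j} - \herm{u}{e_j}\herm{e_j}{e_j}^{-1}\herm{e_j}{e_j} = 0$ for every $j$. Since $\{e_1, \ldots, e_n\}$ spans $S$, this gives $u - s \perp S$, hence $u = s + (u-s) \in S + S\c$.

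For $S \cap S\c = \{0\}$, observe that any $v$ in this intersection satisfies $\herm v v = 0$, so anisotropy forces $v = 0$. This simultaneously yields the directness of the sum and the uniqueness of the decomposition, so $H = S \oplus S\c$, as required.

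There is essentially no obstacle beyond bookkeeping: the only things to watch are (i) that anisotropy is used twice (once to invert $\herm{e_i}{e_i}$, once to kill $S \cap S\c$) and (ii) that the sesquilinearity convention of Section~\ref{sec:Hermitian-spaces} is linear in the first slot, so the coefficients in the projection formula must be written as $\herm{u}{e_i} \herm{e_i}{e_i}^{-1}$ on the left of $e_i$, exactly as in the proof of Lemma~\ref{lem:orthogonal-basis}.
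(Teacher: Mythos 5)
Your proof is correct and is essentially the paper's own argument: both take the orthogonal basis from Lemma~\ref{lem:orthogonal-basis}, form the projection $\sum_i \herm{u}{e_i}\herm{e_i}{e_i}^{-1}e_i$, and check that the remainder lies in $S\c$. The only difference is that you spell out the directness of the sum via anisotropy, which the paper leaves implicit.
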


\begin{proof}
Let $u \in H$. By Lemma \ref{lem:orthogonal-basis}, $S$ has an orthogonal basis $\{e_1, \ldots, e_n \}$. Let $u_S = \sum_{i=1}^{n} \herm{u}{e_i} \herm{e_i}{e_i}^{-1} e_i$ and $u_{S\c} = u - u_S$. Then $u_S \in S$ and $u_{S\c} \in S\c$, and we have $u = u_S + u_{S\c}$.
\end{proof}

A reason to equip a linear space with a non-degenerate inner product might be to provide an identification of the space with its dual. In the present context the situation is as follows.

Let $H$ be a Hermitian space over $F$. Its algebraic dual space is the (right) linear space consisting of all linear maps from $H$ to $F$. For the present purposes, this space is in general too large. We put
\[ H^\ast \;=\; \{ \rho \colon H \to F \;\colon\; \text{$\rho$ is linear} \text{ and } \text{$\kernel \rho$ is orthoclosed} \} \]
and we refer to $H^\ast$ simply as the {\it dual space} of $H$. The following lemma implies that $H^\ast$ is a subspace of the algebraic dual space.

\begin{lemma} \label{lem:dual-space}
For any Hermitian space $H$, we have $H^\ast = \{ \herm{\cdot}{u} \colon u \in H \}$ and
\begin{equation} \label{fml:Hstar}
H \to H^\ast \komma u \mapsto \herm{\cdot}{u}
\end{equation}
is a bijection.
\end{lemma}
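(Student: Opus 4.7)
The plan is to verify that the assignment (\ref{fml:Hstar}) is well-defined, injective, and has image exactly $H^\ast$. The first two points are routine; the surjectivity onto $H^\ast$ is the substantial content and explains why the definition of $H^\ast$ imposes the orthoclosedness condition on the kernel.

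For well-definedness, I note that $\herm{\cdot}{u}$ is linear by sesquilinearity, and its kernel equals $\lin u\c$. Since any orthogonal complement satisfies $A\c = A\ccc$, the kernel is orthoclosed, so $\herm{\cdot}{u} \in H^\ast$. Injectivity of the map $u \mapsto \herm{\cdot}{u}$ follows from anisotropy: if $\herm{\cdot}{u_1} = \herm{\cdot}{u_2}$, then $\herm{u_1-u_2}{u_1-u_2}=0$, hence $u_1 = u_2$.

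The interesting step is surjectivity. Given $\rho \in H^\ast$, I may assume $\rho \neq 0$ (otherwise $\rho = \herm{\cdot}{0}$). Let $K = \kernel \rho$. Since $\rho$ is a non-zero linear functional, $K$ has codimension $1$ in $H$; by assumption, $K$ is orthoclosed, so $K\cc = K \neq H$, which forces $K\c \neq \{0\}$. Pick any non-zero $u \in K\c$. By Lemma~\ref{lem:splitting-subspaces} the $1$-dimensional subspace $\lin u$ is splitting, so $\lin u\c$ has codimension $1$ in $H$. Since $u \in K\c$ implies $K \subseteq \lin u\c$, a dimension count gives $K = \lin u\c$, and consequently $K\c = \lin u$. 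Now I choose the right scalar: set $v = u\beta$ with $\beta = \rho(u)\adj \herm{u}{u}^{-1}$, so that $\herm{u}{v} = \herm{u}{u}\beta\adj = \rho(u)$. The functionals $\rho$ and $\herm{\cdot}{v}$ agree on $K$ (both vanish) and on $u$ (by construction), hence on $H = \lin u \oplus K$.

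The main obstacle is genuinely the surjectivity: one must extract a representing vector from the bare information that the kernel is orthoclosed. The key leverage comes from combining the codimension-$1$ property of $\kernel\rho$ with Lemma~\ref{lem:splitting-subspaces}, which forces the orthogonal complement of the kernel to be exactly $1$-dimensional, so a single vector suffices. Anisotropy then enables the scalar adjustment in the final step.
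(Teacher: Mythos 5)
Your proof is correct and follows essentially the same route as the paper: both extract a nonzero vector orthogonal to $\kernel \rho$ (which exists precisely because the kernel is orthoclosed and proper) and rescale it so that the associated functional $\herm{\cdot}{v}$ represents $\rho$. The paper verifies the agreement in one line by observing that $y - \rho(y)x \in \kernel\rho$ for the chosen $x \perp \kernel\rho$, so your additional steps establishing $K = \lin{u}\c$ and invoking the splitting of $\lin{u}$ are sound but not needed.
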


\begin{proof}
We clearly have $\herm{\cdot}{u} \in H^\ast$ for any $u \in H$. Conversely, let $\rho \in H^\ast$ and assume $\rho \neq 0$. Then there is an $x \perp \kernel \rho$ such that $\rho(x) = 1$. As $y - \rho(y) x \in \kernel \rho$ for any $y \in H$, we have $\herm{y}{x} = \herm{y - \rho(y) x + \rho(y) x}{\,x} = \rho(y) \herm{x}{x}$ and thus $\rho = \herm{\cdot}{\herm{x}{x}^{-1} x}$.
\end{proof}

\begin{example}
1. For a finite-dimensional Hermitian space $H$, all subspaces are orthoclosed. Hence $H^\ast$ coincides with the algebraic dual space.

2. Let $H$ be a complex Hilbert space. Then a linear form $\rho \colon H \to \Complexes$ is in $H^\ast$ if and only if $\rho$ is bounded (i.e., continuous).
\end{example}

We shall now specify the maps between Hermitian spaces that we consider as relevant. The preservation of linear dependence is not a sufficient condition. The preservation of the Hermitian form is in turn a quite restrictive condition. Let $\phi \colon H_1 \to H_2$ be a linear map between Hermitian spaces. We will require that the algebraic adjoint of $\phi$ restricts to a map from $H_2^\ast$ to $H_1^\ast$.

\begin{definition} \label{def:adjoint-of-linear-map}
Let $\phi \colon H_1 \to H_2$ be a linear map between Hermitian spaces over the same $\ast$-sfield. We call $\phi$ {\it adjointable} if $\rho \circ \phi \in H_1^\ast$ for any $\rho \in H_2^\ast$. In this case, the {\it adjoint} of $\phi$, denoted by $\phi\adj$, is the map $\rho \mapsto \rho \circ \phi$ under the identification~(\ref{fml:Hstar}) of the Hermitian spaces with their duals.
\end{definition}

In other words, for $\phi \colon H_1 \to H_2$ to possess the adjoint $\phi\adj \colon H_2 \to H_1$ means that
\begin{equation} \label{fml:adjoint-of-linear-map}
\herm{\phi(u)}{v} \;=\; \herm{u}{\phi\adj(v)}
\quad\text{for any $u \in H_1$ and $v \in H_2$.}
\end{equation}
We observe from~(\ref{fml:adjoint-of-linear-map}) that $\phi\adj$ is linear. Besides, the symmetry of the Hermitian forms implies that $\phi\adj$ is adjointable as well, having the adjoint $\phi$.

\begin{example} \label{ex:adjointable-maps}
1. Let $H_1$ and $H_2$ be finite-dimensional Hermitian spaces. Then any linear map $\phi \colon H_1 \to H_2$ is adjointable.

2. Let $H_1$ and $H_2$ be complex Hilbert spaces. Then a linear map $\phi \colon H_1 \to H_2$ is adjointable if and only if $\phi$ is bounded. The ``only if'' part of this equivalence is a consequence of the closed graph theorem; see, e.g., {\rm \cite[Proposition 2.3.11]{Ped}}.
\end{example}

We note that the notion of an adjoint map is available also for maps that are quasilinear rather than linear. In the present context, however, the larger generality does not seem to be much of an advantage.

\subsubsection*{Orthosets}

We proceed by considering the projective structure associated with a Hermitian space $H$. It might seem natural to associate with $H$ an orthogeometry, consisting of the projective space $\P(H)$ and the orthogonality relation $\perp$ \cite[Section 14.1]{FaFr}. However, the following lemma shows that, given the orthogonality relation, the relation of linear dependence is already uniquely determined. We may hence work with a structure that is based solely on an orthogonality relation.

\begin{lemma} \label{lem:perp-determines-star}
For vectors $u$, $v$ of a Hermitian space, $\{ u, v \}\cc = \lin{u, v}$. Consequently,
\[ \{ \lin u, \lin v \}\cc \;=\; \lin u \star \lin v. \]
\end{lemma}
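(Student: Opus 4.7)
The plan is to prove both equalities by double inclusion, leveraging the earlier remark that finite-dimensional subspaces are orthoclosed (Lemma~\ref{lem:splitting-subspaces}) and unpacking the orthogonality relation on $\P(H)$ in terms of that on $H$.

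For the first equality, I would start with the inclusion $\lin{u,v} \subseteq \{u,v\}\cc$. Observe that for any set $A \subseteq H$, the orthogonal complement $A\c$ is by definition a linear subspace of $H$, and hence so is $A\cc$. Since $A \subseteq A\cc$ in general, $\{u,v\}\cc$ contains both $u$ and $v$ and, being a subspace, contains all their linear combinations, giving $\lin{u,v} \subseteq \{u,v\}\cc$. For the reverse inclusion, apply Lemma~\ref{lem:splitting-subspaces} to the finite-dimensional subspace $\lin{u,v}$: it is splitting and therefore orthoclosed, i.e.\ $\lin{u,v}\cc = \lin{u,v}$. Monotonicity of $\cdot\cc$ with respect to $\subseteq$ (which follows directly from the antitonicity of $\cdot\c$) combined with $\{u,v\} \subseteq \lin{u,v}$ yields $\{u,v\}\cc \subseteq \lin{u,v}\cc = \lin{u,v}$.

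For the consequence, I would unfold the induced orthogonality relation on $\P(H)$, under which $\lin x \perp \lin y$ precisely when $\herm{x}{y} = 0$, i.e.\ $x \perp y$ in $H$. Hence
\[ \{\lin u, \lin v\}\c \;=\; \{\lin w \colon w \in \{u,v\}\c\}, \]
and applying $\cdot\c$ once more,
\[ \{\lin u, \lin v\}\cc \;=\; \{\lin w \colon w \in \{u,v\}\cc\} \;=\; \{\lin w \colon w \in \lin{u,v}\}, \]
using the first part of the lemma. By the definition~(\ref{fml:line-in-linear-space}) of $\star$, the right-hand side equals $\lin u \star \lin v$, completing the proof.

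I expect no serious obstacle; the argument is a routine verification. The only point requiring some care is the passage between orthogonality on $H$ and on $\P(H)$ in the second part, which must be done by explicitly choosing representative vectors and noting that $\perp$ descends cleanly from $H$ to $\P(H)$. The substantive input, namely that finite-dimensional subspaces coincide with their biorthogonal, is fully supplied by Lemma~\ref{lem:splitting-subspaces}.
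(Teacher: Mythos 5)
Your proof is correct and uses the same key input as the paper — Lemma~\ref{lem:splitting-subspaces} to conclude that the finite-dimensional subspace $\lin{u,v}$ is splitting, hence orthoclosed — with the routine inclusions and the transfer of $\perp$ from $H$ to $\P(H)$ spelled out in more detail than the paper's one-line argument. No issues.
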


\begin{proof}
By Lemma~\ref{lem:splitting-subspaces}, the $2$-dimensional subspace $\lin{u,v}$ is splitting and hence orthoclosed.
\end{proof}

\begin{definition}
An {\it orthoset} is a set $X$ equipped with a binary relation $\perp$ and a constant $0$ such that the following holds:
\begin{itemize}

\item[\rm (O1)] $x \perp y$ implies $y \perp x$ for any $x, y \in X$,

\item[\rm (O2)] $x \perp x$ if and only if $x = 0$,

\item[\rm (O3)] $0 \perp x$ for any $x \in X$.

\end{itemize}
Elements $x, y \in X$ such that $x \perp y$ are called {\it orthogonal}. The element $0$ is called {\it falsity} and the elements distinct from $0$ are called {\it proper}.

Finally, a {\it $\perp$-set} is a subset of $X$ consisting of mutually orthogonal proper elements. If, for some $n \in \Naturals$, $X$ contains an $n$-element $\perp$-set and any other $\perp$-set contained in $X$ has at most size $n$, we call $n$ the {\it rank} of $X$. If there is no such $n$, we say that $X$ has infinite rank, or rank $\infty$.
\end{definition}

Given an orthoset $X$, we make the following additional definitions. For a subset $A$ of an orthoset $X$, we put $A\c = \{ x \in X \colon x \perp y \text{ for every $y \in A$} \}$. We call $A$ {\it orthoclosed} if $A\cc = A$. In this case, $A$, equipped with the orthogonality relation inherited from $X$, is likewise an orthoset, which we call a {\it subspace} of $X$.

The following kind of maps will play the role of structure-preserving maps between orthosets. For a further discussion, we refer to \cite{PaVe4}. For an alternative approach, which puts the focus on the preservation of the orthogonality relation, see \cite{PaVe1,PaVe2}.

In what follows, the image of (whatever kind of) a map $f$ is denoted by $\image f$.

\begin{definition}
A $f \colon X \to Y$ between orthosets is called {\it adjointable} if there is a map $g \colon Y \to X$ such that, for any $x \in X$ and $y \in Y$,
\[ f(x) \perp y \text{ if and only if } x \perp g(y). \]
In this case, $g$ is called an {\it adjoint} of $f$.

Moreover, the {\it rank} of $f$ is the rank of the subspace $(\image f)\cc$ of $Y$.
\end{definition}

Clearly, if $g$ is an adjoint of $f$, then also $f$ is an adjoint of $g$, and hence we occasionally refer to $f$ and $g$ as an {\it adjoint pair}.

Our discussion is focused on {\it irredundant} orthosets. Irredundancy means that any elements $x$ and $y$ such that $\{x\}\c = \{y\}\c$ coincide.

\begin{lemma} \label{lem:unique-adjoint}
Let $f \colon X \to Y$ be an adjointable map and assume that $X$ is irredundant. Then $f$ possesses a unique adjoint.
\end{lemma}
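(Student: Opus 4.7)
The plan is to argue by irredundancy, reducing the pointwise equality of two purported adjoints to an equality of orthogonal complements of single elements.

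First I would suppose $g_1, g_2 \colon Y \to X$ are both adjoints of $f$, and fix an arbitrary $y \in Y$. The goal is to show $g_1(y) = g_2(y)$. By the assumption of irredundancy of $X$, it is enough to verify that $\{g_1(y)\}\c = \{g_2(y)\}\c$.

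For this, take any $x \in X$. Using the symmetry axiom (O1), $x \in \{g_i(y)\}\c$ is the same as $x \perp g_i(y)$, for $i = 1, 2$. Applying the defining property of an adjoint to $g_1$ gives $x \perp g_1(y) \iff f(x) \perp y$; applying the same property to $g_2$ gives $x \perp g_2(y) \iff f(x) \perp y$. Chaining these equivalences yields $x \in \{g_1(y)\}\c \iff x \in \{g_2(y)\}\c$, hence the two orthocomplements coincide. Irredundancy then forces $g_1(y) = g_2(y)$, and since $y$ was arbitrary, $g_1 = g_2$.

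There is no real obstacle here; the only point that needs care is the use of symmetry of $\perp$ to pass between the formulation ``$f(x) \perp y$ iff $x \perp g(y)$'' that defines an adjoint and the formulation ``$x \in \{g(y)\}\c$'' that feeds into the irredundancy hypothesis. Everything else is a direct unwinding of definitions.
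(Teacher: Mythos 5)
Your proof is correct and is essentially the paper's own argument: the paper likewise observes that $\{g(y)\}\c = \{x \in X \colon f(x) \perp y\}$ is determined by $f$ alone and then invokes irredundancy. No issues.
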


\begin{proof}
For any adjoint $g \colon Y \to X$ of $f$, the orthocomplement of $g(y)$, $y \in Y$, is uniquely determined by $f$: we have $\{g(y)\}\c = \{ x \colon f(x) \perp y \}$.
\end{proof}

If a map $f$ between orthosets has a unique adjoint, we will usually denote it by $f\adj$.

Adjointable maps preserve the membership of an element in the closure of a subset. Maps between closure spaces with this property are called {\it continuous}; see, e.g., \cite[Section~2.1]{Ern}.

\begin{lemma} \label{lem:continuity-of-adjointable-maps}
Let $f \colon X \to Y$ be an adjointable map between orthosets. Then the following holds.
\begin{itemize}

\item[\rm (i)] $f(0) = 0$.

\item[\rm (ii)] For any $A \subseteq X$, we have $f(A\cc) \subseteq f(A)\cc$. In fact, $f(A\cc)\cc = f(A)\cc$.

\item[\rm (iii)] If $A \subseteq Y$ is orthoclosed, so is $f^{-1}(A)$.

\end{itemize}
\end{lemma}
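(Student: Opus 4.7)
My plan is to use nothing beyond the defining equivalence of an adjoint pair $(f,g)$, namely $f(x)\perp y \Leftrightarrow x \perp g(y)$, together with the axioms (O1)--(O3). Uniqueness of the adjoint (Lemma~\ref{lem:unique-adjoint}) is not needed, so I simply fix some adjoint $g\colon Y \to X$ of $f$ at the outset.

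For (i), I would argue as follows. By (O3), $0\perp g(y)$ holds for every $y\in Y$, so by adjointness $f(0)\perp y$ for every $y\in Y$. Specialising to $y=f(0)$ and invoking (O2) forces $f(0)=0$.

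For (ii), the single substantive observation is that $g$ maps $f(A)\c$ into $A\c$: if $z\perp f(a)$ for every $a\in A$, then by adjointness $a\perp g(z)$ for every $a\in A$, i.e.\ $g(z)\in A\c$. Given this, for $x\in A\cc$ and $z\in f(A)\c$ we have $x\perp g(z)$, whence $f(x)\perp z$ by adjointness; this yields $f(A\cc)\subseteq f(A)\cc$. The additional equality $f(A\cc)\cc=f(A)\cc$ is then routine bookkeeping: $A\subseteq A\cc$ gives $f(A)\subseteq f(A\cc)$ and hence $f(A)\cc\subseteq f(A\cc)\cc$, while the first inclusion together with the fact that $f(A)\cc$ is orthoclosed gives $f(A\cc)\cc\subseteq f(A)\cc$.

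Part (iii) is handled by the symmetric observation that $g$ maps $A\c$ into $f^{-1}(A)\c$: if $z\in A\c$ and $x'\in f^{-1}(A)$, then $f(x')\in A$ gives $f(x')\perp z$, hence $x'\perp g(z)$ by adjointness. Now take $x\in f^{-1}(A)\cc$. Since $g(z)\in f^{-1}(A)\c$ for every $z\in A\c$, we obtain $x\perp g(z)$ and therefore $f(x)\perp z$ for all such $z$, i.e.\ $f(x)\in A\cc=A$; hence $x\in f^{-1}(A)$, proving that $f^{-1}(A)$ is orthoclosed. I do not foresee a real obstacle here; the three proofs are essentially just three instances of the same trick of translating a statement on one side of the adjoint equivalence to the other, and the main thing to keep track of is simply which element is being pushed through $f$ and which through $g$.
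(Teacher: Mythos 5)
Your argument is correct in all three parts: each step uses only the adjointness equivalence $f(x)\perp y \Leftrightarrow x\perp g(y)$ together with (O1)--(O3), and the key observations --- that $g$ carries $f(A)^\perp$ into $A^\perp$ for (ii), and $A^\perp$ into $f^{-1}(A)^\perp$ for (iii) --- are exactly what is needed; the bookkeeping for the equality $f(A^{\perp\perp})^{\perp\perp}=f(A)^{\perp\perp}$ is also sound. The paper itself does not prove this lemma but merely cites \cite[Lemma 3.5]{PaVe4}, so your proposal supplies a self-contained proof where the paper defers to an external reference; there is no in-paper argument to diverge from, and your route is the natural one.
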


\begin{proof}
See \cite[Lemma 3.5]{PaVe4}.
\end{proof}

The continuity of an adjointable map between orthosets implies a property that might be interpreted as the preservation of linear dependence: if $f \colon X \to Y$ is adjointable, we have by Lemma~\ref{lem:continuity-of-adjointable-maps}(ii) for any $x_1, x_2 \in X$,
\begin{equation} \label{fml:preserving-linear-dependence}
f(\{ x_1, x_2 \}\cc) \;\subseteq\; \{ f(x_1), f(x_2) \}\cc.
\end{equation}

In order to express the structural coincidence of two orthosets, we will use the following notion.

\begin{definition}
We call a bijection $f \colon X \to Y$ between orthosets an {\it orthoisomor\-phism} if, for any $x, y \in X$, $x \perp y$ holds if and only if $f(x) \perp f(y)$.
\end{definition}

\begin{proposition} \label{prop:orthoisomorphism}
Let $f \colon X \to Y$ be a bijection between orthosets. Then $f$ is an orthoisomorphism if and only if $f$ and $f^{-1}$ form an adjoint pair.
\end{proposition}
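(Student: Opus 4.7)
The plan is to verify both implications by direct substitution, leveraging the fact that $f$ is bijective so that elements of $Y$ may be written as $f(x')$ for some $x' \in X$, and elements of $X$ as $f^{-1}(y')$ for some $y' \in Y$.

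For the forward implication, I assume $f$ is an orthoisomorphism and show that $f^{-1}$ serves as an adjoint of $f$. Fix $x \in X$ and $y \in Y$, and write $y = f(x')$ where $x' = f^{-1}(y)$. Applying the orthoisomorphism condition to $x$ and $x'$, we obtain
\[ f(x) \perp y \;=\; f(x') \;\iff\; x \perp x' \;=\; f^{-1}(y), \]
which is exactly the adjointability condition. Since the relation between $f$ and $f^{-1}$ is symmetric under the definition of adjoint pair (as noted immediately after the definition of adjointable map in the paper), $f^{-1}$ has $f$ as its adjoint automatically.

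For the reverse implication, I assume the biconditional
\[ f(x) \perp y \;\iff\; x \perp f^{-1}(y) \]
holds for all $x \in X$, $y \in Y$, and I want to conclude $x_1 \perp x_2 \iff f(x_1) \perp f(x_2)$ for all $x_1, x_2 \in X$. Given $x_1, x_2 \in X$, I substitute $x = x_1$ and $y = f(x_2)$ into the adjointability relation, obtaining
\[ f(x_1) \perp f(x_2) \;\iff\; x_1 \perp f^{-1}(f(x_2)) \;=\; x_1 \perp x_2, \]
which is precisely the orthoisomorphism property.

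There is no real obstacle here; the argument is a textbook substitution and relies only on the bijectivity of $f$ together with the definitions of orthoisomorphism and adjoint pair. One small point to keep in mind is that the definition requires the adjointability condition for \emph{all} $x \in X$ and $y \in Y$, which is why bijectivity of $f$ (giving surjectivity onto $Y$) is essential in both directions for translating freely between the two formulations.
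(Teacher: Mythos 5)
Your proof is correct and follows essentially the same route as the paper's: both directions are the same substitution argument, using bijectivity of $f$ to rewrite $y \in Y$ as $f(x')$ and thereby translate between the orthoisomorphism condition and the adjointability condition. The paper compresses this into a single chain of equivalences, but the content is identical.
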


\begin{proof}
The map $f$ is an orthoisomorphism iff, for any $x, y \in X$, $x \perp y$ is equivalent to $f(x) \perp f(y)$ iff, for any $x \in X$ and $z \in Y$, $f(x) \perp z$ is equivalent to $x \perp f^{-1}(z)$.
\end{proof}

\subsubsection*{Hermitian spaces vs.\ linear orthosets}

Provided that dimensions are at least $4$, Hermitian spaces correspond to orthosets of a certain kind, which we now specify.

\begin{definition}
An orthoset $X$ is called {\it linear} if, for any distinct proper elements $x, y \in X$, there is a proper element $z \in X$ such that $\{x,y\}\c = \{x,z\}\c$ and exactly one of $y$ and $z$ is orthogonal to $x$.
\end{definition}

Note that any linear orthoset is irredundant. Indeed, if $x \neq y$ and $x \notperp y$, there is a proper $z \perp x$ such that $\{x,y\}\c = \{x,z\}\c$. But then $z \perp y$ would imply $z \in \{x,y\}\c$ and hence $z \perp z$. Thus $z \in \{x\}\c$ but $z \notin \{y\}\c$.

\begin{theorem} \label{thm:Hermitian-spaces-linear-orthosets}
Let $H$ be a Hermitian space. We equip $\P(H)$ with the relation $\perp$ given by
\[ \lin u \perp \lin v \quad\text{if}\quad u \perp v \]
for $u, v \in H$, and with the zero subspace $\{0\}$. Then $\P(H)$ is a linear orthoset, whose rank coincides with the dimension of $H$.

Conversely, let $X$ be a linear orthoset of rank $\geq 4$. Then there is a Hermitian space $H$ and an orthoisomorphism between $X$ and $\P(H)$.
\end{theorem}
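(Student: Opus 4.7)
The plan is to prove the forward direction by direct verification and to invoke the main result of \cite{PaVe3} for the converse.

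For the forward direction, the orthoset axioms (O1)--(O3) follow immediately from the symmetry and anisotropy of the Hermitian form, together with $\herm{0}{v} = 0$. To check the linearity axiom, let $\lin u$ and $\lin v$ be distinct proper elements of $\P(H)$. By Lemma~\ref{lem:splitting-subspaces} the two-dimensional subspace $\lin{u,v}$ is splitting, and I will exhibit the required $z$ in two cases. If $u \notperp v$, set $z := v - \herm{v}{u}\,\herm{u}{u}^{-1}\,u$; then $z$ is non-zero, $z \perp u$, and $\lin{u,z} = \lin{u,v}$. If $u \perp v$, set $z := u + v$; then $z$ is non-zero, $\herm{z}{u} = \herm{u}{u} \neq 0$ so $z \notperp u$, and again $\lin{u,z} = \lin{u,v}$. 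In each case, Lemma~\ref{lem:perp-determines-star} yields $\{\lin u, \lin v\}\c = \{\lin u, \lin z\}\c$, and exactly one of $\lin v, \lin z$ is orthogonal to $\lin u$. The rank statement is a direct consequence of Lemma~\ref{lem:orthogonal-basis}: orthogonal bases of finite-dimensional subspaces give rise to $\perp$-sets of matching cardinality in $\P(H)$, while any $\perp$-set in $\P(H)$ corresponds to a linearly independent family of mutually orthogonal non-zero vectors in $H$.

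The converse is the substantive direction and constitutes the main obstacle. Given a linear orthoset $X$ of rank $\geq 4$, the strategy is first to reconstruct a projective structure on $X$ by setting $x \star y := \{x,y\}\cc$ for proper $x, y \in X$; the linearity axiom supplies the required third point on every line, so that $(X, \star, 0)$ becomes a projective space in the sense of Section~\ref{sec:Linear-spaces}. The coordinatization theorem for projective spaces of rank $\geq 4$ then yields a linear space $V$ and a projective isomorphism $X \cong \P(V)$. Transporting the orthogonality relation along this isomorphism produces an orthocomplementation on the lattice of orthoclosed subspaces of $V$, which by a polarity argument of Birkhoff--von Neumann type is induced by a symmetric sesquilinear form on $V$, necessarily anisotropic by axiom (O2). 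This program is executed in detail in \cite{PaVe3}, and I appeal to the result there. The hardest step is this last one: recovering the Hermitian form from the abstract orthocomplementation relies on the rank hypothesis $\geq 4$ to place the polarity within the scope of classical coordinatization theory, and makes essential use of both the linearity axiom, which guarantees enough orthogonal pairs inside each two-dimensional section to determine the form up to a scalar, and of axiom (O2), which secures anisotropy.
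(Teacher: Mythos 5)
Your proposal is correct and matches the paper's approach: the paper's entire proof is a citation of \cite{PaVe3}, which is exactly what you invoke for the substantive converse direction, and your direct verification of the forward direction (well-definedness, the two-case construction of $z$ via Gram--Schmidt, and the rank count using Lemma~\ref{lem:orthogonal-basis}) is sound. The only cosmetic point is that the equality $\{\lin u,\lin v\}\c=\{\lin u,\lin z\}\c$ follows already from $\lin{u,v}=\lin{u,z}$ by sesquilinearity, without needing Lemma~\ref{lem:perp-determines-star}.
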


\begin{proof}
See \cite[Theorem 4.5]{PaVe3}.
\end{proof}

We turn to the question how adjointable maps between Hermitian spaces relate to adjointable maps between the corresponding orthosets.

\begin{proposition} \label{prop:adjointable-linear-map}
Let $\phi \colon H_1 \to H_2$ be an adjointable linear map between Hermitian spaces. Then $\P(\phi)$ is adjointable and its adjoint is $\P(\phi)\adj = \P(\phi\adj)$.
\end{proposition}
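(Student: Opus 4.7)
The plan is to verify directly, from the defining identity of the linear adjoint, that $\P(\phi\adj)$ serves as an orthoset-adjoint of $\P(\phi)$. There is no subtle geometry involved; everything reduces to unfolding definitions once we observe that $\phi\adj \colon H_2 \to H_1$ is itself linear (as remarked after Definition~\ref{def:adjoint-of-linear-map}) and hence, by Proposition~\ref{prop:semilinear-induces-homomorphism}, induces a well-defined map $\P(\phi\adj) \colon \P(H_2) \to \P(H_1)$ sending $\lin v$ to $\lin{\phi\adj(v)}$. Similarly $\P(\phi)$ is defined on $\P(H_1)$ by $\lin u \mapsto \lin{\phi(u)}$; both send $\lin 0$ to $\lin 0$, so they are well defined on the whole orthosets including the zero element.

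Having these candidates, the substantive step is simply the following chain of equivalences, for any $u \in H_1$ and $v \in H_2$:
\begin{align*}
\P(\phi)(\lin u) \perp \lin v
 &\iff \lin{\phi(u)} \perp \lin v
 \iff \phi(u) \perp v
 \iff \herm{\phi(u)}{v} = 0 \\
 &\iff \herm{u}{\phi\adj(v)} = 0
 \iff u \perp \phi\adj(v)
 \iff \lin u \perp \P(\phi\adj)(\lin v),
\end{align*}
where the middle equivalence is exactly~(\ref{fml:adjoint-of-linear-map}), and the orthogonality of one-dimensional subspaces is read off from the orthogonality relation on $\P(H_1)$ and $\P(H_2)$ given in Theorem~\ref{thm:Hermitian-spaces-linear-orthosets}. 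This shows simultaneously that $\P(\phi)$ is adjointable and that $\P(\phi\adj)$ is an adjoint. Since a linear orthoset is irredundant (as noted after the definition), Lemma~\ref{lem:unique-adjoint} guarantees that this adjoint is unique, so we may conclude $\P(\phi)\adj = \P(\phi\adj)$.

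There is no real obstacle; the only point that requires a moment's care is that $\phi(u)$ or $\phi\adj(v)$ could vanish, but in that case $\lin{\phi(u)} = \lin 0$ is orthogonal to everything in $\P(H_2)$, which matches the fact that $\herm{\phi(u)}{v} = 0$ for all $v$, and similarly on the other side; the equivalences above remain valid without alteration.
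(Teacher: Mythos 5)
Your proposal is correct and follows essentially the same route as the paper's own proof: the chain of equivalences $\phi(u)\perp v \iff \herm{\phi(u)}{v}=0 \iff \herm{u}{\phi\adj(v)}=0 \iff u\perp\phi\adj(v)$ shows that $\P(\phi\adj)$ is an adjoint of $\P(\phi)$, and uniqueness follows from the irredundancy of $\P(H_1)$ via Lemma~\ref{lem:unique-adjoint}. Your additional remarks on well-definedness and the vanishing cases are fine but not needed beyond what the paper records.
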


\begin{proof}
For any $u \in H_1$ and $v \in H_2$, we have $\phi(u) \perp v$ iff $\herm{\phi(u)}{v} = 0$ iff $\herm{u}{\phi\adj(v)} = 0$ iff $u \perp \phi\adj(v)$. It follows that $\P(\phi\adj)$ is an adjoint of $\P(\phi)$. As $\P(H_1)$ is irredundant, the adjoint is by Lemma~\ref{lem:unique-adjoint} unique.
\end{proof}

For the converse direction, some preparations are necessary. For the remainder of the section, let $H_1$ be a Hermitian space over the sfield $F_1$ and $H_2$ a Hermitian space over the sfield $F_2$.

\begin{lemma} \label{lem:adjointable-map-is-homomorphism}
Any adjointable map $f \colon \P(H_1) \to \P(H_2)$ between orthosets is a projective homomorphism. 

Moreover, the rank of $f$ as a map between orthosets coincides with the rank of $f$ as a projective homomorphism.
\end{lemma}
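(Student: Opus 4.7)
The plan is to verify the two defining conditions of a projective homomorphism (from Definition~\ref{def:homomorphism-of-projective-spaces}) for $f$ and then separately check the rank assertion. Both conditions will follow essentially from Lemma~\ref{lem:continuity-of-adjointable-maps} combined with Lemma~\ref{lem:perp-determines-star}, which together translate the continuity of adjointable maps into the preservation of the projective operation $\star$.

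First I would dispose of condition (ii) of Definition~\ref{def:homomorphism-of-projective-spaces}: Lemma~\ref{lem:continuity-of-adjointable-maps}(i) gives $f(0) = 0$ immediately. For condition (i), suppose $\lin a \in \lin b \star \lin c$ in $\P(H_1)$. By Lemma~\ref{lem:perp-determines-star} this means $\lin a \in \{\lin b, \lin c\}\cc$. Applying Lemma~\ref{lem:continuity-of-adjointable-maps}(ii) with $A = \{\lin b, \lin c\}$ yields
\[
f(\lin a) \;\in\; f(\{\lin b, \lin c\}\cc) \;\subseteq\; \{f(\lin b), f(\lin c)\}\cc \;=\; f(\lin b) \star f(\lin c),
\]
where the last equality uses Lemma~\ref{lem:perp-determines-star} in $H_2$. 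Hence $f$ is a projective homomorphism.

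For the rank statement, let $W$ denote the projective subspace of $\P(H_2)$ spanned by $\image f$ and let $S = (\image f)\cc$ denote its orthoclosure. Under the correspondence of Theorem~\ref{thm:Hermitian-spaces-linear-orthosets}, $W$ corresponds to the linear span $U \subseteq H_2$ of a family of representatives for $\image f$, while $S$ corresponds to the orthoclosure $U\cc$ in $H_2$. The projective rank of $f$ equals $\dim U$, and the orthoset rank of $f$ is, by definition, the rank of $S$ in $\P(H_2)$, i.e.\ the supremum of the sizes of $\perp$-sets contained in $S$, which is $\dim U\cc$ when finite and $\infty$ otherwise. I would then split into two cases. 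If $\dim U < \infty$, Lemma~\ref{lem:splitting-subspaces} yields that $U$ is splitting and hence orthoclosed, so $U = U\cc$ and both ranks equal $\dim U$. If $\dim U = \infty$, then $U \subseteq U\cc$ forces $\dim U\cc = \infty$ as well, and both ranks equal $\infty$. Either way the two notions of rank agree.

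The only step requiring any thought is the rank comparison in infinite dimensions, where one must avoid conflating the projective span with the generally larger orthoclosure; the argument above sidesteps this subtlety by observing that, at the level of cardinalities measured by $\perp$-sets, the two subspaces cannot be distinguished.
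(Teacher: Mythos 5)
Your proposal is correct and follows essentially the same route as the paper: the homomorphism property is obtained from Lemma~\ref{lem:continuity-of-adjointable-maps} together with Lemma~\ref{lem:perp-determines-star}, and the rank comparison is settled by identifying the projective span of $\image f$ with a linear subspace $U$ of $H_2$, invoking Lemma~\ref{lem:splitting-subspaces} to get $U = U\cc$ in the finite-dimensional case (with Lemma~\ref{lem:orthogonal-basis} giving the orthogonal basis that realises the $\perp$-set), and noting both ranks are infinite otherwise. No substantive differences.
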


\begin{proof}
For $u, v, w \in H_1$, $\,\lin u \in \{ \lin v, \lin w \}\cc$ implies $f(\lin u) \in \{ f(\lin v), f(\lin w) \}\cc$ by~(\ref{fml:preserving-linear-dependence}), and we have $f(\{0\}) = \{0\}$ by Lemma~\ref{lem:continuity-of-adjointable-maps}(i). In view of Lemma~\ref{lem:perp-determines-star}, the first assertion follows.

It remains to show that the rank $k$ of the subspace $\closure \image f$ of the projective space $\P(H_2)$ coincides with the rank $k'$ of the subspace $(\image f)\cc$ of the orthoset $\P(H_2)$. Let $S$ be the linear subspace of $H_2$ spanned by the union of the elements of $\image f$. Then $\P(S) = \closure \image f$ and $S$ is $k$-dimensional. Assume that $k$ is finite. Then $S$ is orthoclosed, hence so is $\P(S)$. As $\closure \image f \subseteq (\image f)\cc$, we conclude that $\P(S) = (\image f)\cc$. Moreover, by Lemma~\ref{lem:orthogonal-basis}, $S$ has a $k$-element orthogonal basis and hence the rank of $\P(S)$ is $k$. That is, $k' = k$. If $k = \infty$, $\closure \image f$ contains an infinite set of mutually orthogonal one-dimensional subspaces. Hence $k' = \infty$ as well.
\end{proof}

\begin{lemma} \label{lem:rank-of-adjoint}
Let $f \colon \P(H_1) \to \P(H_2)$ be adjointable. Then the ranks of $f$ and its adjoint $f\adj$ coincide.
\end{lemma}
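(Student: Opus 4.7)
The plan is to identify the orthoset ranks of $f$ and $g := f\adj$ with linear dimensions of spans of images in the ambient Hermitian spaces, exploit the splitting property in the finite-rank case to control the image of the adjoint, and invoke the fact that a projective homomorphism cannot increase rank. By Lemma~\ref{lem:adjointable-map-is-homomorphism}, both $f$ and $g$ are projective homomorphisms whose orthoset ranks equal their projective-homomorphism ranks; writing $M = (\image f)\cc$ and $N = (\image g)\cc$, and letting $M_0 \subseteq H_2$ and $N_0 \subseteq H_1$ be the linear spans of the representative vectors of $\image f$ and $\image g$, we thus have $k := \operatorname{rank} f = \dim M_0$ and $l := \operatorname{rank} g = \dim N_0$.

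It suffices to show: if $k < \infty$, then $l \leq k$. By symmetry, $l < \infty$ will force $k \leq l$; the two implications together give $k = l$ in all cases (both finite and equal, or both infinite, since otherwise we would derive ``$\infty \leq$ finite''). So assume $k < \infty$. Then $M_0$ is finite-dimensional, hence splitting by Lemma~\ref{lem:splitting-subspaces}: $H_2 = M_0 \oplus M_0\c$. The technical core is the identity $(\image g)\cc = g(M)\cc$. The inclusion ``$\supseteq$'' is immediate from $g(M) \subseteq \image g$; for ``$\subseteq$'' it suffices to show $\image g \subseteq g(M) \cup \{0\}$.

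Given $\lin v \in \P(H_2)$, decompose $v = v_M + v_{M\c}$. Since $v_{M\c} \perp M_0$ and every representative vector of an element of $\image f$ lies in $M_0$, we have $\lin{v_{M\c}} \in (\image f)\c$. The orthoset axioms force $z = 0$ iff $z$ is orthogonal to every element, so combining with adjointness shows $g(y) = 0$ precisely when $y \in (\image f)\c$; hence $g(\lin{v_{M\c}}) = 0$. If both components of $v$ are non-zero, then $\lin v \in \lin{v_M} \star \lin{v_{M\c}}$ and the projective-homomorphism property yields $g(\lin v) \in g(\lin{v_M}) \star 0 = \{0, g(\lin{v_M})\}$; the degenerate cases $v_M = 0$ and $v_{M\c} = 0$ are trivial.

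To conclude, $g|_M \colon M \to \P(H_1)$ is a projective homomorphism from the rank-$k$ projective space $M$. A routine induction from $h(a \star b) \subseteq h(a) \star h(b)$ shows that any projective homomorphism $h$ satisfies $h(\closure B) \subseteq \closure{h(B)}$, so $g(M)\cc$ is spanned by at most $k$ elements and therefore has rank at most $k$; via the identity just established and Lemma~\ref{lem:adjointable-map-is-homomorphism}, this rank equals $l$, so $l \leq k$. The main obstacle, I expect, is the clean orchestration of $(\image g)\cc = g(M)\cc$: one must make the splitting of $M_0$, the projective-homomorphism property of $g$, and the adjointness characterisation of $g^{-1}(0)$ cooperate in a single argument.
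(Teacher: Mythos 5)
Your proof is correct, and its skeleton matches the paper's: in the finite-rank case one shows that $(\image f\adj)\cc$ is the closure of the image under $f\adj$ of a $k$-element spanning set of $(\image f)\cc$, deduces $\operatorname{rank} f\adj \leq k$, and concludes by symmetry, the infinite case following automatically. The difference is in how the key identity $(\image f\adj)\cc = f\adj((\image f)\cc)\cc$ is obtained: the paper simply cites \cite{PaVe4} (Lemmas 3.5 and 3.8), whereas you prove the needed containment from scratch by splitting $H_2$ along the finite-dimensional span $M_0$ of the representatives of $\image f$, observing that $f\adj(y)=0$ exactly when $y \in (\image f)\c$, and using the projective-homomorphism property to get $f\adj(\lin v) \in \{0, f\adj(\lin{v_M})\}$. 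Your route is self-contained and elementary, at the cost of using finite-dimensionality of $M_0$ (harmless here, since the identity is only needed in the finite-rank case); the paper's version is shorter but leans on the external reference. One small point to make explicit at the end: the projective span of the at most $k$ elements of $f\adj(B)$ is $\P$ of a finite-dimensional, hence splitting, hence orthoclosed subspace of $H_1$, which is why $f\adj(M)\cc$ is contained in it and has rank at most $k$; this is implicit in your ``therefore has rank at most $k$'' but worth a sentence.
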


\begin{proof}
Assume first that $f$ has the finite rank $k$. Again, the subspace $S$ of $H_2$ spanned by the union of the elements of $\image f$ is $k$-dimensional and we have $\P(S) = (\image f)\cc$. Let $\{ u_1, \ldots, u_k \}$ be a basis of $S$ and let $B = \{ \lin{u_1}, \ldots, \lin{u_k} \}$. Then $B\cc = (\image f)\cc$ and by \cite[Lemma 3.5, 3.8]{PaVe4}, $(\image f\adj)\cc = f\adj((\image f)\cc)\cc = \linebreak f\adj(B\cc)\cc = f\adj(B)\cc$. Hence $\image f\adj$ is contained in an at most $k$-dimensional subspace of $H_1$ and we conclude that $f\adj$ has a rank $\leq k$. In particular, also $f\adj$ has a finite rank, and by symmetry it follows that both ranks coincide.

If $f$ has infinite rank, it is now clear that $f\adj$ has infinite rank as well.
\end{proof}

\begin{lemma} \label{lem:inclusion-of-finite-dimensional-subspaces}
Let $S$ be a finite-dimensional subspace of a Hermitian space $H$. Then the inclusion map $\iota \colon S \to H$ is adjointable and the adjoint is the orthogonal projection of $H$ to $S$.
\end{lemma}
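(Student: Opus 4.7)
The plan is to use the splitting property of finite-dimensional subspaces (Lemma~\ref{lem:splitting-subspaces}) to define the orthogonal projection, then verify directly that it serves as an adjoint in the sense of Definition~\ref{def:adjoint-of-linear-map}.

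First, since $S$ is finite-dimensional, Lemma~\ref{lem:splitting-subspaces} gives $H = S \oplus S\c$, so every $v \in H$ decomposes uniquely as $v = v_S + v_{S\c}$ with $v_S \in S$ and $v_{S\c} \in S\c$. I define $\pi \colon H \to S$ by $\pi(v) = v_S$; this is clearly linear.

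Next, I verify adjointability as required by Definition~\ref{def:adjoint-of-linear-map}. Let $\rho \in H\adj$. By Lemma~\ref{lem:dual-space}, $\rho = \herm{\cdot}{w}$ for some $w \in H$. Writing $w = w_S + w_{S\c}$, for any $u \in S$ we have $(\rho \circ \iota)(u) = \herm{u}{w} = \herm{u}{w_S} + \herm{u}{w_{S\c}} = \herm{u}{w_S}$ since $u \perp w_{S\c}$. Thus $\rho \circ \iota = \herm{\cdot}{w_S}|_S$, which lies in $S\adj$ by Lemma~\ref{lem:dual-space} applied to $S$. Hence $\iota$ is adjointable.

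Finally, I identify the adjoint with $\pi$ via equation~(\ref{fml:adjoint-of-linear-map}). For $u \in S$ and $v \in H$, the same splitting yields
\[ \herm{\iota(u)}{v} \;=\; \herm{u}{v_S + v_{S\c}} \;=\; \herm{u}{v_S} \;=\; \herm{u}{\pi(v)}, \]
so $\iota\adj = \pi$ as claimed.

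There is no real obstacle here: the statement reduces entirely to the existence of the orthogonal decomposition $H = S \oplus S\c$, which is guaranteed by Lemma~\ref{lem:splitting-subspaces}, together with a direct computation. The only point requiring attention is confirming that the adjoint notion of Definition~\ref{def:adjoint-of-linear-map} is satisfied, i.e., that $\rho \circ \iota$ actually lies in $S\adj$; this follows because the corresponding linear form on $S$ is represented by the vector $w_S \in S$.
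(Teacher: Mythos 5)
Your proof is correct and follows the same route as the paper, which simply invokes Lemma~\ref{lem:splitting-subspaces} (every finite-dimensional subspace is splitting) and leaves the rest implicit; you have spelled out the details of defining the projection, checking $\rho \circ \iota \in S\adj$, and verifying~(\ref{fml:adjoint-of-linear-map}), all of which are accurate.
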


\begin{proof}
By Lemma~\ref{lem:splitting-subspaces}, every finite-dimensional subspace of a Hermitian space is splitting.
\end{proof}

\begin{lemma} \label{lem:adjointable-quasilinear}
Let $\phi \colon H_1 \to H_2$ and $\psi \colon H_2 \to H_1$ be semilinear maps such that $\P(\phi)$ and $\P(\psi)$ form an adjoint pair. Assume furthermore that $\phi$ has rank $\geq 2$. Then $\phi$ is quasilinear.
\end{lemma}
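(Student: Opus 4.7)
The plan is to show that the sfield homomorphism $\sigma \colon F_1 \to F_2$ associated with $\phi$ is surjective, by applying Lemma~\ref{lem:Fundamental-Theorem-Faure-Froelicher-quasilinearity}. Thus I must exhibit $2$-dimensional subspaces $S_1 \subseteq H_1$ and $S_2 \subseteq H_2$ such that $\P(\phi)$ maps $\P(S_1)$ onto $\P(S_2)$.

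Since $\phi$ has rank $\geq 2$, I would pick $u_1, u_2 \in H_1$ with $\phi(u_1), \phi(u_2)$ linearly independent, and set $S_1 = \lin{u_1, u_2}$, $S_2 = \lin{\phi(u_1), \phi(u_2)}$. Both are $2$-dimensional and $\phi(S_1) \subseteq S_2$. Moreover, $\phi|_{S_1}$ is injective: if $\alpha^\sigma \phi(u_1) + \beta^\sigma \phi(u_2) = 0$, the $F_2$-independence of the $\phi(u_i)$ forces $\alpha^\sigma = \beta^\sigma = 0$, whence $\alpha = \beta = 0$ because any nonzero sfield homomorphism is injective.

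The core of the argument is the surjectivity $\P(\phi)(\P(S_1)) = \P(S_2)$. For any $y \in S_2 \setminus \{0\}$, the adjoint relation rewrites
\[ \{ x \in S_1 \colon \phi(x) \perp y \} \;=\; S_1 \cap \{\psi(y)\}\c. \]
The subspace $\{\psi(y)\}\c$ of $H_1$ has codimension at most $1$ (being all of $H_1$ when $\psi(y) = 0$), so its intersection with the $2$-dimensional $S_1$ has $F_1$-dimension at least $1$. Choosing a nonzero $x_0$ in it, the injectivity of $\phi|_{S_1}$ guarantees that $\phi(x_0)$ is a nonzero element of $S_2 \cap \{y\}\c$, the unique line of $\P(S_2)$ orthogonal to $\lin y$. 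As $\lin y$ varies over the proper elements of $\P(S_2)$, this orthogonal line varies over all lines of $\P(S_2)$ by the nondegeneracy of the Hermitian form on $S_2$; hence every line of $\P(S_2)$ meets $\image(\phi|_{S_1})$ nontrivially. This is the required surjectivity, and Lemma~\ref{lem:Fundamental-Theorem-Faure-Froelicher-quasilinearity} then yields that $\phi$ is quasilinear.

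The cleverness lies in the use of the adjoint: establishing surjectivity of $\P(\phi|_{S_1})$ onto $\P(S_2)$ directly would entangle with the surjectivity of $\sigma$ itself --- the very thing one wants to prove --- but the adjoint trades this question for an elementary dimension count on the $H_1$ side.
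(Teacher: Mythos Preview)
Your proof is correct and follows essentially the same approach as the paper: restrict to $2$-dimensional subspaces $S_1$, $S_2$, use the adjoint relation to show $\P(\phi)$ maps $\P(S_1)$ onto $\P(S_2)$, and apply Lemma~\ref{lem:Fundamental-Theorem-Faure-Froelicher-quasilinearity}. The only cosmetic difference is that the paper packages the adjoint via the projected map $\tilde\psi = \pi_1 \circ \psi \circ \iota_2$ and argues surjectivity by passing from a target $v \in S_2$ through an orthogonal $v' \in S_2$ first, whereas you work directly with $\psi$ and the orthogonality bijection on lines of $S_2$; the underlying dimension count is identical.
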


\begin{proof}
Let $S_1$ be a $2$-dimensional subspace of $H_1$ such that the subspace $S_2$ of $H_2$ spanned by $\phi(S_1)$ is $2$-dimensional. By Lemma~\ref{lem:inclusion-of-finite-dimensional-subspaces}, the inclusion maps $\iota_1 \colon S_1 \to H_1$ and $\iota_2 \colon S_2 \to H_2$ have adjoints $\pi_1$ and $\pi_2$, which are the orthogonal projections. Put $\tilde\phi = \pi_2 \circ \phi \circ \iota_1$ and $\tilde\psi = \pi_1 \circ \psi \circ \iota_2$. Then $\P(\tilde\phi) \colon \P(S_1) \to \P(S_2)$ and $\P(\tilde\psi) \colon \P(S_2) \to \P(S_1)$ are an adjoint pair: for any $u \in S_1$ and $v \in S_2$, we have $\tilde\phi(u) \perp v$ if and only if $u \perp \tilde\psi(v)$.

We shall show that $\P(\tilde\phi)$ is surjective; then the assertion will follow from Lemma~\ref{lem:Fundamental-Theorem-Faure-Froelicher-quasilinearity}. Let $v \in S_2 \setminus \{0\}$. Let $v' \in S_2 \setminus \{0\}$ be such that $v' \perp v$, and let $u \in S_1 \setminus \{0\}$ be such that $u \perp \tilde\psi(v')$ and hence $\tilde\phi(u) \perp v'$. If $\tilde\phi(u) = 0$, it would follow that $\phi(S_1) = \tilde\phi(S_1)$ is at most $1$-dimensional, a contradiction. Hence $\tilde\phi(u) \neq 0$ and we conclude $\P(\tilde\phi)(\lin u) = \lin{\tilde\phi(u)} = \lin{v}$.
\end{proof}

\begin{theorem} \label{thm:adjointable-quasilinear}
Let $f \colon \P(H_1) \to \P(H_2)$ be an adjointable map of rank $\geq 3$. Then $f$ is induced by a quasilinear map from $H_1$ to $H_2$.
\end{theorem}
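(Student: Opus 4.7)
The proof is essentially a matter of assembling the preparatory lemmas, so my plan is to trace through them in order.

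First I would invoke Lemma~\ref{lem:adjointable-map-is-homomorphism} to conclude that $f$ is a projective homomorphism from $\P(H_1)$ to $\P(H_2)$, and that its rank as a projective homomorphism coincides with its rank as a map between orthosets, so $f$ has projective rank $\geq 3$. Theorem~\ref{thm:Fundamental-Theorem-Faure-Froelicher} then produces a semilinear map $\phi \colon H_1 \to H_2$ with $f = \P(\phi)$. This semilinear witness has rank $\geq 3$ as well, since the subspace of $H_2$ spanned by the image of $\phi$ is exactly the one encoding the projective rank of $\P(\phi)$.

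Next I would apply the same machinery to the adjoint. Since $f$ is adjointable, so is $f\adj$ (with adjoint $f$), and by Lemma~\ref{lem:rank-of-adjoint} the rank of $f\adj$ equals that of $f$, hence is $\geq 3$. Repeating the argument of the previous paragraph with $f\adj$ in place of $f$, Lemma~\ref{lem:adjointable-map-is-homomorphism} together with Theorem~\ref{thm:Fundamental-Theorem-Faure-Froelicher} yields a semilinear map $\psi \colon H_2 \to H_1$ with $f\adj = \P(\psi)$.

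Finally, the pair $(\P(\phi), \P(\psi)) = (f, f\adj)$ is an adjoint pair in the sense of orthosets, and $\phi$ has rank $\geq 3 \geq 2$. Lemma~\ref{lem:adjointable-quasilinear} then applies and forces $\phi$ to be quasilinear, completing the proof.

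I do not anticipate any real obstacle: everything has been set up in the preceding lemmas. The only point that requires a moment's attention is making sure that the two notions of rank (projective versus orthoset) agree for both $f$ and $f\adj$, so that the hypothesis ``rank $\geq 3$'' licenses both the Faure--Frölicher theorem (which needs projective rank $\geq 3$) and Lemma~\ref{lem:adjointable-quasilinear} (which needs semilinear rank $\geq 2$); but this bookkeeping is precisely what Lemmas~\ref{lem:adjointable-map-is-homomorphism} and~\ref{lem:rank-of-adjoint} were designed to handle.
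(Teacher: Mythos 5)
Your proposal is correct and follows exactly the same route as the paper's own proof: pass to projective homomorphisms via Lemma~\ref{lem:adjointable-map-is-homomorphism}, represent both $f$ and $f\adj$ by semilinear maps via Theorem~\ref{thm:Fundamental-Theorem-Faure-Froelicher} (using Lemma~\ref{lem:rank-of-adjoint} for the rank of the adjoint), and conclude with Lemma~\ref{lem:adjointable-quasilinear}. The rank bookkeeping you flag is handled just as you describe.
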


\begin{proof}
By Lemma~\ref{lem:rank-of-adjoint}, also its adjoint $f\adj$ has rank $\geq 3$. By Lemma~\ref{lem:adjointable-map-is-homomorphism}, $f$ and $f\adj$ are projective homomorphisms of rank $\geq 3$. By Theorem~\ref{thm:Fundamental-Theorem-Faure-Froelicher}, there are semilinear maps $\phi \colon H_1 \to H_2$ and $\psi \colon H_2 \to H_1$ such that $f = \P(\phi)$ and $f\adj = \P(\psi)$. By Lemma~\ref{lem:adjointable-quasilinear}, $\phi$ is quasilinear.
\end{proof}

Given a quasilinear map $\phi \colon H_1 \to H_2$, we may again replace the scalar sfield of $H_2$ to achieve linearity. We note that the space $H_2'$ replacing $H_2$ will share with $H_1$ the same scalar sfield, but the involutions might not coincide.

\begin{lemma} \label{lem:quasilinear-to-linear-for-Hermitian-spaces}
Let $\phi \colon H_1 \to H_2$ be a quasilinear map and let $f = \P(\phi)$. Then there is a Hermitian space $H_2'$ over $F_1$ and an orthoisomorphism $t \colon \P(H_2) \to \P(H_2')$ such that $t \circ f$ is induced by a linear map from $H_1$ to $H_2'$.
\end{lemma}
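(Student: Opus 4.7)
The plan is to mimic Lemma~\ref{lem:quasilinear-to-linear} but additionally transport the Hermitian structure of $H_2$ to the ``reindexed'' space, so that the identity map becomes not merely semilinear but also orthogonality-preserving. The only extra ingredient needed, compared with the linear-space version, is that $F_1$ must be equipped with a new involution, dictated by the one on $F_2$.

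Let $\sigma \colon F_1 \to F_2$ be the sfield isomorphism associated with $\phi$, and denote by $\ast$ the involution of $F_2$. Exactly as in the proof of Lemma~\ref{lem:quasilinear-to-linear}, I would leave the additive structure of $H_2$ unchanged and define a new scalar multiplication $\alpha \scmu u = \alpha^\sigma u$ for $\alpha \in F_1$ and $u \in H_2$; call the resulting $F_1$-linear space $H_2'$, and note that $\tau \colon H_2 \to H_2' \komma u \mapsto u$ is $\sigma^{-1}$-linear. To upgrade $H_2'$ to a Hermitian space over $F_1$, I would transport both the involution and the form via $\sigma$: equip $F_1$ with the involution $\alpha \mapsto \sigma^{-1}((\alpha^\sigma)^\ast)$, and define $\herm{u}{v}' = \sigma^{-1}(\herm{u}{v})$ for $u, v \in H_2$. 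Because $\sigma$ is an isomorphism of sfields, this yields an involutory antiautomorphism of $F_1$, and a routine verification shows that $\herm{\cdot}{\cdot}'$ is sesquilinear with respect to this involution, symmetric, and anisotropic (the last because $\sigma^{-1}$ has trivial kernel), making $H_2'$ a Hermitian space in the sense of the paper.

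Having arranged this, the remainder is immediate: for $u, v \in H_2$ we have $\herm{u}{v}' = 0$ iff $\herm{u}{v} = 0$, so $\tau$ preserves the orthogonality relation in both directions and therefore $t = \P(\tau)$ is an orthoisomorphism between $\P(H_2)$ and $\P(H_2')$. Moreover, the composite $\tau \circ \phi \colon H_1 \to H_2'$ is $F_1$-linear, since $\tau$ is $\sigma^{-1}$-linear and $\phi$ is $\sigma$-linear. Finally, $t \circ f = \P(\tau) \circ \P(\phi) = \P(\tau \circ \phi)$, so $t \circ f$ is indeed induced by a linear map from $H_1$ to $H_2'$. The only subtle point is to remember that the involution used on $F_1$ to turn $H_2'$ into a Hermitian space is generally \emph{not} the involution carried by $H_1$, consistent with the warning given just before the statement; but this causes no difficulty, since the assertion only concerns the underlying sfield of~$H_2'$.
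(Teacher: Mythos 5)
Your proposal is correct and follows essentially the same route as the paper's own proof: the underlying $F_1$-space $H_2'$ is built exactly as in Lemma~\ref{lem:quasilinear-to-linear}, the form is transported by $\herm{u}{v}_2' = {\herm{u}{v}_2}^{\sigma^{-1}}$, and the involution $\alpha \mapsto \sigma^{-1}\bigl((\alpha^\sigma)^{\ast_2}\bigr)$ you introduce is precisely the paper's $\alpha^{\sigma \ast_2 \sigma^{-1}}$. Nothing is missing.
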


\begin{proof}
Let $\sigma \colon F_1 \to F_2$ be the isomorphism associated with $\phi$ and let $H_2'$ be the linear space over $F_1$ that is constructed from $H_2$ as in the proof of Lemma~\ref{lem:quasilinear-to-linear}. Let $\sigma \colon F_1 \to F_2$ be the isomorphism associated with $\phi$ and let $H_2'$ be the linear space over $F_1$ that is constructed from $H_2$ as in the proof of Lemma~\ref{lem:quasilinear-to-linear}. Moreover, for $u, v \in H_2$ let $\herm{u}{v}_2' = {\herm{u}{v}_2}^{\sigma^{-1}}$. We readily check that $\herm{\cdot}{\cdot}_2'$ is a Hermitian form on $H_2'$ w.r.t.\ to the involution $\ast_2'$ given by $\alpha^{\ast_2'} = \alpha^{\sigma \ast_2 \sigma^{-1}}$.

Furthermore, $\tau \colon H_2 \to H_2' \komma u \mapsto u$ is $\sigma^{-1}$-linear and hence $\tau \circ \phi$ is linear. Clearly, $t = \P(\tau)$ is an orthoisomorphism between $\P(H_2)$ and $\P(H_2')$, and we have $t \circ f = \P(\tau \circ \phi)$.
\end{proof}

A linear map inducing an adjointable map between orthosets is likewise adjointable.

\begin{theorem} \label{thm:phi-Pphi-adjointable}
Let $\phi \colon H_1 \to H_2$ be a linear map. Then $\phi$ is adjointable if and only if \/ $\P(\phi)$ is adjointable.
\end{theorem}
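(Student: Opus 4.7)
The forward implication, $\phi$ adjointable $\Rightarrow$ $\P(\phi)$ adjointable, is already Proposition \ref{prop:adjointable-linear-map}, so the content of the theorem lies entirely in the converse. My plan is to verify Definition \ref{def:adjoint-of-linear-map} directly, exploiting Lemma \ref{lem:dual-space} to reduce the adjointability of $\phi$ to an orthoclosed-kernel condition and then using the orthoset adjoint of $\P(\phi)$ to exhibit the required orthoclosed subspace.

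Concretely, assume $\P(\phi)$ is adjointable, with orthoset adjoint $g \colon \P(H_2) \to \P(H_1)$. By Lemma \ref{lem:dual-space}, every element of $H_2^\ast$ has the form $\herm{\cdot}{v}$ for some $v \in H_2$, and so it suffices to show that for each such $v$ the form $u \mapsto \herm{\phi(u)}{v}$ lies in $H_1^\ast$, i.e.\ that its kernel $K_v = \{u \in H_1 : \phi(u) \perp v\}$ is orthoclosed. Once this is done, Lemma \ref{lem:dual-space} produces the vector $\phi\adj(v) \in H_1$ representing $\herm{\phi(\cdot)}{v}$, and $\phi$ is adjointable by definition.

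The main step is to translate $K_v$ across $g$. By the definition of $\perp$ on $\P(H_2)$ (Theorem \ref{thm:Hermitian-spaces-linear-orthosets}), $\phi(u) \perp v$ iff $\lin{\phi(u)} \perp \lin v$, i.e.\ iff $\P(\phi)(\lin u) \perp \lin v$, which by the adjoint relation between $\P(\phi)$ and $g$ is equivalent to $\lin u \perp g(\lin v)$. Choose $w \in H_1$ with $g(\lin v) = \lin w$ (allowing $w = 0$ when $g(\lin v) = \{0\}$); then $\lin u \perp \lin w$ is equivalent to $u \perp w$. Hence $K_v = \{w\}\c$, which is orthoclosed by construction.

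I do not anticipate any genuine obstacle here: the argument is essentially a routine unwinding of the two notions of adjointness on rank-one subspaces, together with Lemma \ref{lem:dual-space} to convert ``orthoclosed kernel'' into ``element of $H^\ast$''. The only minor point of care is the degenerate case $g(\lin v) = \{0\}$, where one simply reads $K_v = H_1 = \{0\}\c$, still orthoclosed.
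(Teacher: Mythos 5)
Your proof is correct and follows essentially the same route as the paper's: both reduce the converse to showing, via Lemma~\ref{lem:dual-space}, that $\kernel(\rho \circ \phi)$ is orthoclosed for every $\rho \in H_2^\ast$. The only difference is in execution --- the paper regards $\phi$ itself as an adjointable map between the orthosets $H_1$ and $H_2$ and invokes Lemma~\ref{lem:continuity-of-adjointable-maps}(iii), whereas you compute the kernel directly as $\{w\}\c$ where $\lin w = \P(\phi)\adj(\lin v)$, a slightly more explicit variant that also exhibits the representing vector for the adjoint.
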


\begin{proof}
The ``only if'' part holds by Proposition~\ref{prop:adjointable-linear-map}.

Before proceeding, we note that any Hermitian space $H$, equipped with the usual orthogonal relation and the zero vector, is an orthoset. Moreover, the orthoclosed subspaces of the Hermitian space $H$ coincide with the orthoclosed subsets of the orthoset $H$.

To see the ``if'' part, assume that $\P(\phi)$ is adjointable. It follows that $\phi$, seen as a map between orthosets, is likewise adjointable. For any $\rho \in {H_2}^\ast$, $\kernel \rho$ is by definition an orthoclosed subspace of $H_2$ and hence an orthoclosed subset of the orthoset $H_2$. By Lemma~\ref{lem:continuity-of-adjointable-maps}(iii), $\kernel (\rho \circ \phi) = \phi^{-1}(\kernel \rho)$ is an orthoclosed subset of the orthoset $H_1$. This in turn means that $\kernel (\rho \circ \phi)$ is an orthoclosed subspace of the Hermitian space $H_1$ and hence $\rho \circ \phi \in {H_1}^\ast$. We conclude that $\phi$ is adjointable.
\end{proof}

\begin{remark}
In the particular case of complex Hilbert spaces, we have by Theorem~\ref{thm:phi-Pphi-adjointable} and Example~\ref{ex:adjointable-maps}-2 that, for any linear map $\phi \colon H_1 \to H_2$, the following are equivalent: {\rm (a)} $\phi$ is adjointable, {\rm (b)} $\P(\phi)$ is adjointable, {\rm (c)} $\phi$ is bounded.
\end{remark}

\section{Orthometric correspondences}
\label{sec:Orthometric-correspondences}

We have dealt in the preceding section with adjointable maps between pairs of Hermitian spaces on the one hand and between pairs of the corresponding orthosets on the other hand. In this section, we focus on a certain type of such maps. In case of the Hermitian spaces, we consider bijections that preserve, in some sense, the Hermitian form. In case of the orthosets, we consider orthogonality-preserving bijections.

\begin{definition} \label{def:quasiunitary}
Let $H_1$ be a Hermitian space over the sfield $F_1$ and $H_2$ a Hermitian space over the sfield $F_2$. We call a bijection $\phi \colon H_1 \to H_2$ {\it quasiunitary} if $\phi$ is quasilinear with associated isomorphism $\sigma$ and there is a $\lambda \in F_2 \setminus \{0\}$ such that, for any $u, v \in H_1$,
\begin{equation} \label{fml:quasiunitary}
\herm{\phi(u)}{\phi(v)}_2 \;=\; {\herm{u}{v}_1}^\sigma \; \lambda.
\end{equation}
If in this situation $F_1$ and $F_2$ coincide, $\sigma$ is the identity, and $\lambda = 1$, $\phi$ is called {\it unitary}. For $H_1$ and $H_2$ to be {\it isomorphic} means that there is a unitary map $\phi \colon H_1 \to H_2$.
\end{definition}

\begin{remark} \label{rem:quasiunitary}
Assume that $H_1$ and $H_2$ are not the zero spaces, and let the scalar sfields $F_1$ and $F_2$ be equipped with the involutions $\ast_1$ and $\ast_2$, respectively. If there is a quasiunitary map $\phi \colon H_1 \to H_2$, we conclude from~{\rm (\ref{fml:quasiunitary})} that $\ast_1$ and $\ast_2$ are related by the equation $\alpha^{\sigma \ast_2} = \lambda^{-1} \alpha^{\ast_1 \sigma} \lambda$, $\;\alpha \in F_1$. In particular, if $\phi$ is unitary, we have that $\ast_1 = \ast_2$.

We note that furthermore $\lambda^{\ast_2} = \lambda$. Indeed, let $u \in H_1 \setminus \{0\}$. Then $\herm{\phi(u)}{\phi(u)}_2 = {\herm{\phi(u)}{\phi(u)}_2}^{\ast_2}$ and hence, by~{\rm (\ref{fml:quasiunitary})},
${\herm{u}{u}_1}^\sigma \lambda
= \lambda^{\ast_2} {\herm{u}{u}_1}^{\sigma\ast_2}
= \lambda^{\ast_2} \lambda^{-1} {\herm{u}{u}_1}^{\ast_1\sigma} \lambda
= \lambda^{\ast_2} \lambda^{-1} {\herm{u}{u}_1}^\sigma \lambda$ and the claim follows.
\end{remark}

Among the linear maps, we may characterise unitary maps by means of adjointability, cf.\ Proposition~\ref{prop:orthoisomorphism}.

\begin{proposition} \label{prop:unitary-map}
Let $\phi \colon H_1 \to H_2$ be a bijective linear map between Hermitian spaces. Then $\phi$ is unitary if and only if $\phi$ and $\phi^{-1}$ form an adjoint pair.
\end{proposition}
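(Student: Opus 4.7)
The plan is to prove both implications by direct substitution in the defining equations. Since Definition~\ref{def:adjoint-of-linear-map} already presupposes that $H_1$ and $H_2$ share a common $\ast$-sfield $F$ when one speaks of a linear map being adjointable, the phrase ``$\phi$ and $\phi^{-1}$ form an adjoint pair'' is to be read in the sense of~(\ref{fml:adjoint-of-linear-map}), i.e.\ as $\herm{\phi(u)}{v}_2 = \herm{u}{\phi^{-1}(v)}_1$ for all $u \in H_1$, $v \in H_2$. With this in mind, unitarity of a linear bijection simplifies: the associated skew-field isomorphism $\sigma$ must be $\id_F$, the form-scaling factor $\lambda$ must equal $1$, so unitarity boils down to $\herm{\phi(u)}{\phi(v)}_2 = \herm{u}{v}_1$ for all $u, v \in H_1$.

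For the ``only if'' direction, I would assume $\phi$ unitary and, given $u \in H_1$ and $v \in H_2$, use bijectivity of $\phi$ to write $v = \phi(w)$ with $w = \phi^{-1}(v) \in H_1$; then
\[
\herm{\phi(u)}{v}_2 \;=\; \herm{\phi(u)}{\phi(w)}_2 \;=\; \herm{u}{w}_1 \;=\; \herm{u}{\phi^{-1}(v)}_1,
\]
which states exactly that $\phi^{-1}$ is the adjoint of $\phi$. By the symmetry remark following Definition~\ref{def:adjoint-of-linear-map}, it follows automatically that $\phi$ is the adjoint of $\phi^{-1}$, hence they form an adjoint pair.

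For the ``if'' direction, I would assume the adjoint-pair identity $\herm{\phi(u)}{v}_2 = \herm{u}{\phi^{-1}(v)}_1$ for all $u \in H_1$, $v \in H_2$, substitute $v = \phi(w)$ with $w \in H_1$ (using bijectivity again), and obtain $\herm{\phi(u)}{\phi(w)}_2 = \herm{u}{w}_1$ for all $u, w \in H_1$. Since $\phi$ is linear (hence quasilinear with $\sigma = \id_F$) and bijective, this is the condition in Definition~\ref{def:quasiunitary} with $\sigma = \id_F$ and $\lambda = 1$, so $\phi$ is unitary.

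There is essentially no technical obstacle here: the argument is a two-line calculation in each direction, and the proposition is the Hermitian analogue of Proposition~\ref{prop:orthoisomorphism}. The only point requiring a moment of care is to verify that the adjoint-pair condition is the same as the inner-product identity~(\ref{fml:adjoint-of-linear-map}) rather than the orthogonality-based condition used for orthosets in Proposition~\ref{prop:orthoisomorphism}; but this is immediate from the definition of adjointability given for linear maps between Hermitian spaces.
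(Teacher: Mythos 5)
Your proof is correct and follows exactly the paper's argument: the paper's proof is the same two-way substitution $w = \phi(v)$, reading unitarity as $\herm{\phi(u)}{\phi(v)}_2 = \herm{u}{v}_1$ and the adjoint-pair condition as $\herm{\phi(u)}{w}_2 = \herm{u}{\phi^{-1}(w)}_1$. Your extra remarks unpacking Definition~\ref{def:quasiunitary} and the meaning of adjointability for linear maps are sound but add nothing beyond the paper's one-line chain of equivalences.
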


\begin{proof}
The map $\phi$ is unitary iff $\herm{\phi(u)}{\phi(v)}_2 = \herm{u}{v}_1$ for any $u, v \in H_1$ iff $\herm{\phi(u)}{w}_2 = \herm{u}{\phi^{-1}(w)}_1$ for any $u \in H_1$ and $w \in H_2$.
\end{proof}

In line with our previous considerations, we wonder how (quasi)unitary maps between Hermitian spaces are related to orthoisomorphisms of the corresponding orthosets.

\begin{proposition} \label{prop:quasiunitary-map-induces-orthoiso}
Let $\phi \colon H_1 \to H_2$ be a quasiunitary map between Hermitian spaces. Then $\P(\phi)$ is an orthoisomorphism.
\end{proposition}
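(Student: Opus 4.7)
The plan is to verify the three requirements of an orthoisomorphism for $\P(\phi)$: well-definedness, bijectivity, and biconditional preservation of $\perp$.

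First, since $\phi$ is quasilinear, it is in particular semilinear, so Proposition~\ref{prop:semilinear-induces-homomorphism} already gives us that the assignment $\P(\phi) \colon \lin u \mapsto \lin{\phi(u)}$ is a well-defined projective homomorphism from $\P(H_1)$ to $\P(H_2)$; in particular $\P(\phi)(\{0\}) = \{0\}$.

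Second, I would establish that $\P(\phi)$ is a bijection. Surjectivity onto the proper elements is immediate: given $w \in H_2 \setminus \{0\}$, surjectivity of $\phi$ provides $u \in H_1$ with $\phi(u) = w$, whence $\P(\phi)(\lin u) = \lin w$. For injectivity, suppose $\lin{\phi(u)} = \lin{\phi(v)}$ with $u, v$ proper. Then $\phi(v) = \mu \phi(u)$ for some $\mu \in F_2 \setminus \{0\}$; because $\sigma$ is an isomorphism, choose $\alpha \in F_1$ with $\alpha^\sigma = \mu$, and then $\phi(\alpha u) = \alpha^\sigma \phi(u) = \phi(v)$, so injectivity of $\phi$ gives $\alpha u = v$ and hence $\lin u = \lin v$.

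Third, the preservation of orthogonality in both directions falls out of the defining identity~(\ref{fml:quasiunitary}). For any $u, v \in H_1$,
\[
\lin u \perp \lin v \;\Longleftrightarrow\; \herm{u}{v}_1 = 0 \;\Longleftrightarrow\; \herm{u}{v}_1^\sigma \lambda = 0 \;\Longleftrightarrow\; \herm{\phi(u)}{\phi(v)}_2 = 0 \;\Longleftrightarrow\; \lin{\phi(u)} \perp \lin{\phi(v)},
\]
where the second equivalence uses that $\sigma$ is an isomorphism of sfields (so has trivial kernel) and that $\lambda \neq 0$. Combined with the bijectivity established above, this shows $\P(\phi)$ is an orthoisomorphism.

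There is no real obstacle here; every step is a routine unpacking of the definitions, and the only point that warrants care is the injectivity argument, where the surjectivity of $\sigma$ (guaranteed because $\phi$ is quasilinear, not merely semilinear) is exactly what lets us lift the scalar $\mu$ back to $F_1$.
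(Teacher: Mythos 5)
Your proof is correct and follows essentially the same route as the paper, which simply notes that $\P(\phi)$ is bijective because $\phi$ is and that the orthogonality equivalence is immediate from~(\ref{fml:quasiunitary}). You merely spell out the details the paper leaves implicit, in particular the injectivity of $\P(\phi)$ via lifting the scalar through $\sigma^{-1}$.
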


\begin{proof}
The map $\P(\phi)$ is bijective because so is $\phi$. Hence the assertion is immediate from~(\ref{fml:quasiunitary}).
\end{proof}

For the converse direction, the key result of which we may make use is a generalisation of a theorem of Piziak \cite[Theorem (2.2)]{Piz1}. Piziak proved the subsequent result under the additional assumption that the sesquilinear forms are orthosymmetric and that $\phi$ is quasilinear. The hypothesis of orthosymmetry is not used in his proof. In order to weaken the hypothesis of quasilinearity, however, we need to modify the argumentation. For the reader's convenience, we reproduce shortly also the remaining parts of the proof.

\begin{theorem} \label{thm:Piziak-original}
Let $H_1$ be an at least $2$-dimensional linear space over $F_1$, equipped with a non-degenerate sesquilinear form $\herm{\cdot}{\cdot}_1$, and let $H_2$ be a linear space over $F_2$ equipped with a sesquilinear form $\herm{\cdot}{\cdot}_2$. Let $\phi \colon H_1 \to H_2$ be a semilinear map with associated homomorphism $\sigma$ and assume that $u \perp v$ implies $\phi(u) \perp \phi(v)$ for any $u, v \in H_1$. Then there is a unique $\lambda \in F_2$ such that~{\rm (\ref{fml:quasiunitary})} holds for any $u, v \in H_1$.
\end{theorem}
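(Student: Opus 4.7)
The plan is to study the pulled-back form
\[
[u, v] \;:=\; \herm{\phi(u)}{\phi(v)}_2, \qquad u, v \in H_1.
\]
A direct computation from the semilinearity of $\phi$ shows that $[\,\cdot,\,\cdot\,]$ is biadditive and satisfies $[\alpha u, v] = \alpha^\sigma [u,v]$ together with $[u, \alpha v] = [u,v] \, \alpha^{\sigma \ast_2}$, where $\ast_2$ denotes the involution of $F_2$. The orthogonality hypothesis on $\phi$ reads as $\herm{u}{v}_1 = 0 \Rightarrow [u, v] = 0$, and the goal is to find a single $\lambda \in F_2$ with $[u,v] = \herm{u}{v}_1^\sigma \lambda$ for all $u, v \in H_1$.

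Fix a proper $v \in H_1$. By non-degeneracy, $\{v\}\c = \kernel \herm{\cdot}{v}_1$ is a hyperplane, so I can pick $v_1 \in H_1$ with $\herm{v_1}{v}_1 = 1$ and set $\lambda(v) := [v_1, v]$. Both $\sigma$-linear maps $u \mapsto [u,v]$ and $u \mapsto \herm{u}{v}_1^\sigma \lambda(v)$ vanish on $\{v\}\c$ and agree on $v_1$, so the decomposition $H_1 = \{v\}\c \oplus \lin{v_1}$ yields $[u,v] = \herm{u}{v}_1^\sigma \lambda(v)$ for every $u \in H_1$.

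To show $\lambda(v)$ is independent of $v$, take linearly independent proper $v, w \in H_1$. A short non-degeneracy argument shows that $u \mapsto (\herm{u}{v}_1, \herm{u}{w}_1)$ maps $H_1$ onto $F_1^2$, so I can choose $u_1$ with $\herm{u_1}{v}_1 = 1$ and $\herm{u_1}{w}_1 = 0$. Biadditivity of $[\,\cdot,\,\cdot\,]$ in the second slot then yields
\[
\lambda(v+w) \;=\; [u_1, v+w] \;=\; [u_1, v] + [u_1, w] \;=\; \lambda(v),
\]
and symmetrically $\lambda(v+w) = \lambda(w)$, hence $\lambda(v) = \lambda(w)$. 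Since $\dim H_1 \geq 2$, iterating this comparison (through a common linearly independent third vector if $v$ and $w$ are collinear) extends the identification to all proper pairs, so $\lambda$ is constant on $H_1 \setminus \{0\}$; the case $v = 0$ is trivial, and uniqueness of $\lambda$ is immediate from non-degeneracy and the injectivity of $\sigma$. I expect the main obstacle to be the weakening from quasilinearity to semilinearity: Piziak's reduction step used the surjectivity of $\sigma$, whereas the normalisations $\herm{v_1}{v}_1 = 1$ and $\herm{u_1}{v}_1 = 1$ above allow the whole argument to proceed inside $\image \sigma$, invoking only biadditivity and the injectivity of $\sigma$.
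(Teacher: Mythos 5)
Your proposal is correct and follows essentially the same route as the paper: fix $v$, normalise a vector $w$ with $\herm{w}{v}_1 = 1$ to define $\lambda_v$, use the hyperplane $\{v\}\c$ to obtain $\herm{\phi(u)}{\phi(v)}_2 = {\herm{u}{v}_1}^\sigma \lambda_v$ for all $u$, and then compare $\lambda_v$, $\lambda_{v'}$ and $\lambda_{v+v'}$ for linearly independent $v, v'$, passing through a third vector for collinear pairs. The only (harmless) variation is in the independence step, where the paper derives an identity valid for all $u$ and refutes $\lambda_{v+v'} \neq \lambda_v$ by forcing a linear dependence of $v$ and $v'$, whereas you test against a vector dual to the pair $\{v, v'\}$ (justified by the surjectivity of $u \mapsto (\herm{u}{v}_1, \herm{u}{v'}_1)$, which indeed follows from non-degeneracy) and read off $\lambda_{v+v'} = \lambda_v$ directly.
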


\begin{proof}
Let $v \in H_1 \setminus \{0\}$. As $\herm{\cdot}{\cdot}_1$ is supposed to be non-degenerate, there is a $w \in H_1$ such that $\herm{w}{v}_1 = 1$. Put $\lambda_v = \herm{\phi(w)}{\phi(v)}_2$. We claim that
\[ \herm{\phi(u)}{\phi(v)}_2
\;=\; {\herm{u}{v}_1}^\sigma \, \lambda_v \quad \text{for all $u \in H_1$.} \]
Indeed, $u-\herm{u}{v}_1 w \perp v$ because $\herm{u-\herm{u}{v}_1 w}{\,v}_1 = \herm{u}{v}_1 - \herm{u}{v}_1 \herm{w}{v}_1 = 0$. Hence $\phi(u-\herm{u}{v}_1 w) \perp \phi(v)$, that is,
\[ \begin{split}
0 & \;=\; \herm{\phi(u-\herm{u}{v}_1 w)}{\phi(v)}_2
\;=\; \herm{\phi(u)}{\phi(v)}_2 - {\herm{u}{v}_1}^\sigma \herm{\phi(w)}{\phi(v)}_2 \\
& \;=\; \herm{\phi(u)}{\phi(v)}_2 - {\herm{u}{v}_1}^\sigma \lambda_v. \end{split}\]
It is our aim to show that the factor $\lambda_v$ is independent of $v$. Let $v' \in H_1 \setminus \{0\}$ be linearly independent of $v$. Then for any $u \in H_1$
\[ \begin{split}
& {\herm{u}{v}_1}^\sigma \, \lambda_v + {\herm{u}{v'}_1}^\sigma \, \lambda_{v'}
\;=\; \herm{\phi(u)}{\phi(v)}_2 + \herm{\phi(u)}{\phi(v')}_2
\;=\; \herm{\phi(u)}{\phi(v+v')}_2 \\
& =\; {\herm{u}{v+v'}_1}^\sigma \, \lambda_{v+v'}
\;=\; {\herm{u}{v}_1}^\sigma \, \lambda_{v+v'} + {\herm{u}{v'}_1}^\sigma \, \lambda_{v+v'}
\end{split} \]
and hence ${\herm{u}{v}_1}^\sigma (\lambda_{v+v'}-\lambda_v) = {\herm{u}{v'}_1}^\sigma (\lambda_{v'}-\lambda_{v+v'})$. Assume now that $\lambda_{v+v'} \neq \lambda_{v}$. Choose a $\tilde u \in H_1$ such that $\herm{\tilde u}{v'}_1 = 1$, and put $\beta = \herm{\tilde u}{v}_1$. Then $\beta^\sigma = (\lambda_{v'}-\lambda_{v+v'}) (\lambda_{v+v'}-\lambda_v)^{-1}$ and we have
\[ 0 \;=\; \herm{u}{v}_1 - \herm{u}{v'}_1 \, \beta
\;=\; \herm{u}{v - \beta^{\ast_1} v'}_1 \]
for all $u \in H_1$. Again using that $\herm{\cdot}{\cdot}_1$ is non-degenerate, we conclude $v = \beta^{\ast_1} v'$, that is, $v$ and $v'$ are linearly dependent, a contradiction. We conclude that $\lambda_{v+v'} = \lambda_{v}$ and similarly we see that $\lambda_{v+v'} = \lambda_{v'}$, that is, $\lambda_{v} = \lambda_{v'}$. As for any two vectors in $H_1$ there is a third vector linearly independent of both, we have shown that~(\ref{fml:quasiunitary}) holds for all $u, v \in H_1$. As regards the uniqueness assertion, we just note that there are $u, v \in H_1$ such that $\herm{u}{v}_1 = 1$.
\end{proof}

\begin{corollary} \label{thm:Piziak}
Let $H_1$ and $H_2$ be Hermitian spaces and assume that $H_1$ is at least $2$-dimensional. Let $\phi \colon H_1 \to H_2$ be a quasilinear bijection. Then $\phi$ is quasiunitary if and only if $\phi$ preserves the orthogonality relation.
\end{corollary}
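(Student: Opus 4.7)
The plan is to derive both directions directly from the machinery already assembled, with Theorem~\ref{thm:Piziak-original} doing the heavy lifting in the less trivial direction.

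For the ``only if'' direction, suppose $\phi$ is quasiunitary with associated isomorphism $\sigma$ and factor $\lambda \in F_2 \setminus \{0\}$. Then for any $u,v \in H_1$, equation~(\ref{fml:quasiunitary}) gives $\herm{\phi(u)}{\phi(v)}_2 = {\herm{u}{v}_1}^\sigma \lambda$. Since $\sigma$ is a bijection with $0^\sigma = 0$ and $\lambda$ is a nonzero element of the sfield $F_2$, the right-hand side vanishes exactly when $\herm{u}{v}_1 = 0$. This yields $u \perp v$ iff $\phi(u) \perp \phi(v)$, so $\phi$ preserves orthogonality (in both directions, in fact).

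For the ``if'' direction, assume $\phi$ is a quasilinear bijection with $u \perp v \Rightarrow \phi(u) \perp \phi(v)$. Since the Hermitian form on $H_1$ is anisotropic, it is non-degenerate: if $\herm{u}{v}_1 = 0$ for all $v$, then in particular $\herm{u}{u}_1 = 0$ and so $u = 0$. As $\phi$ is in particular semilinear, $H_1$ is at least $2$-dimensional by hypothesis, and $H_2$ carries a sesquilinear form, all hypotheses of Theorem~\ref{thm:Piziak-original} are satisfied. We therefore obtain a (unique) $\lambda \in F_2$ such that
\[
\herm{\phi(u)}{\phi(v)}_2 \;=\; {\herm{u}{v}_1}^\sigma\,\lambda
\]
for all $u,v \in H_1$, where $\sigma$ is the isomorphism associated with $\phi$.

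It remains to verify that $\lambda \neq 0$, which is the only point requiring a small additional argument. Pick any $u \in H_1 \setminus \{0\}$, which exists because $H_1$ is at least $2$-dimensional. By anisotropy, $\herm{u}{u}_1 \neq 0$, hence ${\herm{u}{u}_1}^\sigma \neq 0$ since $\sigma$ is an isomorphism of sfields. Since $\phi$ is a bijection, $\phi(u) \neq 0$, and anisotropy of $\herm{\cdot}{\cdot}_2$ yields $\herm{\phi(u)}{\phi(u)}_2 \neq 0$. The displayed identity then forces $\lambda \neq 0$, so $\phi$ satisfies Definition~\ref{def:quasiunitary} and is quasiunitary. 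The argument contains no real obstacle; the essential work has already been done in Theorem~\ref{thm:Piziak-original}, and the corollary is essentially a matter of verifying that anisotropy supplies the non-degeneracy hypothesis and rules out $\lambda = 0$.
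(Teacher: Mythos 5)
Your proof is correct and follows the same route as the paper: the ``only if'' direction is immediate from the quasiunitary identity (the paper cites Proposition~\ref{prop:quasiunitary-map-induces-orthoiso}), and the ``if'' direction is an application of Theorem~\ref{thm:Piziak-original}. Your explicit checks that anisotropy supplies non-degeneracy and that $\lambda \neq 0$ are details the paper leaves implicit, and they are carried out correctly.
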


\begin{proof}
The ``only if'' part is clear from Proposition~\ref{prop:quasiunitary-map-induces-orthoiso}. The ``if'' part holds by Theorem~\ref{thm:Piziak-original}.
\end{proof}

We are led to a version of Wigner's Theorem for Hermitian spaces.

\begin{theorem} \label{thm:Wigner-iso}
Let $H_1$ and $H_2$ be at least $3$-dimensional Hermitian spaces. Then any orthoisomorphism between $\P(H_1)$ and $\P(H_2)$ is induced by a quasiunitary map from $H_1$ to $H_2$.
\end{theorem}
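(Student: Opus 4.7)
The plan is to assemble the statement from Theorem~\ref{thm:adjointable-quasilinear} together with the generalised Piziak result (Corollary~\ref{thm:Piziak}). Let $f \colon \P(H_1) \to \P(H_2)$ be an orthoisomorphism. By Proposition~\ref{prop:orthoisomorphism}, $f$ and $f^{-1}$ form an adjoint pair, so $f$ is adjointable; and since $f$ is bijective onto $\P(H_2)$, we have $(\image f)\cc = \P(H_2)$, whose rank equals $\dim H_2 \geq 3$. Theorem~\ref{thm:adjointable-quasilinear} therefore delivers a quasilinear map $\phi \colon H_1 \to H_2$ with $f = \P(\phi)$; let $\sigma$ denote its associated sfield isomorphism.

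Next I would check that $\phi$ itself is a bijection of the underlying vector spaces. For any $u \in H_1 \setminus \{0\}$, the image $\P(\phi)(\lin u) = \lin{\phi(u)}$ is a proper element of $\P(H_2)$, so $\phi(u) \neq 0$. If $\phi(u) = \phi(v)$ with $u, v$ both nonzero, then $\lin u = \lin v$, so $v = \alpha u$ for some $\alpha \in F_1$; the identity $\alpha^\sigma \phi(u) = \phi(u)$ then forces $\alpha^\sigma = 1$, and since $\sigma$ is injective, $\alpha = 1$. For surjectivity, given $w \in H_2 \setminus \{0\}$, pick $u \in H_1$ with $f(\lin u) = \lin w$; then $\phi(u) = \beta w$ for some $\beta \in F_2 \setminus \{0\}$, and surjectivity of $\sigma$ allows us to choose $\gamma \in F_1$ with $\gamma^\sigma = \beta^{-1}$, whence $\phi(\gamma u) = w$.

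Finally, because $f$ is an orthoisomorphism, $u \perp v$ in $H_1$ implies $\lin u \perp \lin v$ in $\P(H_1)$, hence $\lin{\phi(u)} \perp \lin{\phi(v)}$ in $\P(H_2)$, that is, $\phi(u) \perp \phi(v)$; so $\phi$ preserves the orthogonality relation. Since $H_1$ has dimension at least $3$, in particular at least $2$, Corollary~\ref{thm:Piziak} applies to the quasilinear bijection $\phi$ and yields that $\phi$ is quasiunitary. There is no genuine obstacle in this argument: the real work has already been done in Theorem~\ref{thm:adjointable-quasilinear} and in the generalisation of Piziak's theorem established earlier in this section, and the only small point to verify is that the quasilinear representative of $f$ can be taken to be a bijection on vectors rather than just on lines.
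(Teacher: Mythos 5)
Your proof is correct, but it reaches the quasilinear representative by a different route than the paper. The paper's proof first observes that, by Lemma~\ref{lem:perp-determines-star}, the orthoisomorphism $f$ preserves the operation $\star$ (since $\lin u \star \lin v = \{\lin u, \lin v\}\cc$ and $f$ preserves $\perp$ in both directions), and then invokes the classical bijective Fundamental Theorem in the form of Corollary~\ref{cor:Fundamental-Theorem-Faure-Froelicher-2}, which delivers a quasilinear \emph{bijection} $\phi$ with $f = \P(\phi)$ in one stroke; the final appeal to Corollary~\ref{thm:Piziak} is then the same as yours. You instead view $f$ through the adjointability framework: Proposition~\ref{prop:orthoisomorphism} makes $f$ adjointable with adjoint $f^{-1}$, its rank is $\dim H_2 \geq 3$ because $(\image f)\cc = \P(H_2)$, and Theorem~\ref{thm:adjointable-quasilinear} produces a quasilinear $\phi$ inducing $f$. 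The price is that Theorem~\ref{thm:adjointable-quasilinear} does not assert bijectivity of $\phi$, so you must supply the injectivity and surjectivity arguments by hand --- which you do correctly (the kernel-triviality, the reduction of $\phi(u)=\phi(v)$ to $\alpha^\sigma = 1$, and the use of surjectivity of $\sigma$ to rescale preimages are all sound). Your route is arguably more in the spirit of the paper's adjointability theme and reuses its Section~3 machinery, while the paper's route is shorter because the bijective version of the Fundamental Theorem already carries the bijectivity of the representing map.
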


\begin{proof}
Let $f \colon \P(H_1) \to \P(H_2)$ be an orthoisomorphism. Then, for any $u, v, w \in H_1$, we have $\lin u \in \{ \lin v, \lin w \}\cc$ if and only if $f(\lin u) \in \{ f(\lin v), f(\lin w) \}\cc$. By Lemma~\ref{lem:perp-determines-star}, this means that $\lin u \in \lin v \star \lin w$ if and only if $f(\lin u) \in f(\lin v) \star f(\lin w)$. By Corollary~\ref{cor:Fundamental-Theorem-Faure-Froelicher-2}, there is a quasilinear bijection $\phi \colon H_1 \to H_2$ such that $f = \P(\phi)$. As $\phi$ preserves the orthogonality relation, the assertion follows from Corollary~\ref{thm:Piziak}.
\end{proof}

We note that also quasiunitary maps inducing a certain orthoisomorphism are determined up to a factor only.

\begin{lemma} \label{lem:uniqueness-of-quasilinear-maps}
Let $\phi \colon H_1 \to H_2$ be a quasiunitary map between Hermitian spaces and assume that $H_1$ is at least $2$-dimensional. Let $\psi \colon H_1 \to H_2$ be a further semilinear map. Then $\P(\phi) = \P(\psi)$ if and only if there is a $\kappa \in F_2 \setminus \{0\}$ such that $\psi = \kappa \phi$. In this case, also $\psi$ is quasiunitary.
\end{lemma}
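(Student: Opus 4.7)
The plan is to split the statement into its two assertions and dispatch them in turn, leaning almost entirely on Lemma~\ref{lem:uniqueness-of-semilinear-maps}.

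For the first assertion, observe that since $\phi$ is quasiunitary, it is in particular a quasilinear bijection from $H_1$ onto $H_2$. Because $H_1$ is at least $2$-dimensional, so is $H_2$, hence the image of $\phi$ spans a subspace of dimension $\geq 2$; that is, $\phi$ has rank $\geq 2$. The hypothesis $\P(\phi) = \P(\psi)$ then puts us exactly in the setting of Lemma~\ref{lem:uniqueness-of-semilinear-maps}, which yields some $\kappa \in F_2 \setminus \{0\}$ with $\psi = \kappa \phi$, together with the fact that $\psi$ is quasilinear with associated isomorphism $\sigma' \colon F_1 \to F_2$ given by $\alpha^{\sigma'} = \kappa \alpha^\sigma \kappa^{-1}$, where $\sigma$ denotes the isomorphism associated with $\phi$.

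For the second assertion, I check the three ingredients of Definition~\ref{def:quasiunitary} for $\psi$. Bijectivity is immediate: $\psi = \kappa\phi$ is the composition of the bijection $\phi$ with multiplication by the unit $\kappa$. Quasilinearity has already been recorded. It remains to produce the scalar $\lambda'$ for which~(\ref{fml:quasiunitary}) holds. The natural candidate is $\lambda' = \kappa \lambda \kappa^{\ast_2}$, where $\lambda$ is the factor associated with $\phi$; this element is nonzero since $\kappa$, $\lambda$ and $\kappa^{\ast_2}$ all are. A direct computation, using the sesquilinearity of $\herm{\cdot}{\cdot}_2$ and~(\ref{fml:quasiunitary}) for $\phi$, gives for all $u,v \in H_1$
\[
\herm{\psi(u)}{\psi(v)}_2
= \kappa \herm{\phi(u)}{\phi(v)}_2 \kappa^{\ast_2}
= \kappa\, \herm{u}{v}_1^\sigma\, \lambda\, \kappa^{\ast_2}
= \herm{u}{v}_1^{\sigma'}\,(\kappa\lambda\kappa^{\ast_2}),
\]
where the last equality inserts $\kappa^{-1}\kappa = 1$ and uses $\alpha^{\sigma'} = \kappa\alpha^\sigma\kappa^{-1}$. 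This is exactly~(\ref{fml:quasiunitary}) for $\psi$ with $\lambda' = \kappa\lambda\kappa^{\ast_2}$, so $\psi$ is quasiunitary.

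There is no real obstacle here; the only step that requires a moment's care is getting the one-sided factors $\kappa$ and $\kappa^{\ast_2}$ to line up correctly in the sesquilinear computation, which is why the new factor $\lambda'$ inherits a ``sandwich'' form rather than a simple scalar multiple of $\lambda$.
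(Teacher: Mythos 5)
Your proposal is correct and follows essentially the same route as the paper: the equivalence is obtained from Lemma~\ref{lem:uniqueness-of-semilinear-maps} (after noting that $\phi$ has rank $\geq 2$), and the quasiunitarity of $\kappa\phi$ is then verified directly. The paper leaves that last verification as a ``readily checked'' remark; your explicit computation, with the twisted isomorphism $\alpha^{\sigma'}=\kappa\alpha^\sigma\kappa^{-1}$ and the sandwiched factor $\lambda'=\kappa\lambda\kappa^{\ast_2}$, is exactly the intended check and is carried out correctly for the paper's sesquilinearity conventions.
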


\begin{proof}
The asserted equivalence holds by Lemma~\ref{lem:uniqueness-of-semilinear-maps}. Furthermore, we readily check that for any $\kappa \in F_2 \setminus \{0\}$, also $\kappa \phi$ is quasiunitary.
\end{proof}

We may formulate in this context a statement similarly to Lemma~\ref{lem:quasilinear-to-linear-for-Hermitian-spaces}. Given a quasiunitary map $\phi \colon H_1 \to H_2$, we may make $H_2$ into a space over the scalar sfield of $H_1$, to get a unitary map from $H_1$ to the modified space $H_2'$. We note that in this case, by Remark~\ref{rem:quasiunitary},  also the sfield involutions associated with the Hermitian forms of $H_1$ and $H_2'$ coincide.

\begin{lemma} \label{lem:quasiunitary-to-unitary}
Let $H_1$ and $H_2$ be Hermitian spaces over the sfields $F_1$ and $F_2$, respectively. Let $\phi \colon H_1 \to H_2$ be a quasiunitary map and let $f = \P(\phi)$. Then there is a Hermitian space $H_2'$ over $F_1$ and an orthoisomorphism $t \colon \P(H_2) \to \P(H_2')$ such that $t \circ f$ is induced by a unitary map from $H_1$ to $H_2'$.
\end{lemma}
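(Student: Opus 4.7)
The plan is to imitate the construction in Lemma~\ref{lem:quasilinear-to-linear-for-Hermitian-spaces}, but to choose the Hermitian form on the rescalarised space so that $\tau \circ \phi$ is not merely linear but actually unitary. Let $\sigma \colon F_1 \to F_2$ be the isomorphism associated with $\phi$ and let $\lambda \in F_2 \setminus \{0\}$ be the scalar witnessing quasiunitarity, so $\herm{\phi(u)}{\phi(v)}_2 = \herm{u}{v}_1^\sigma \lambda$ for all $u, v \in H_1$. As in the cited lemma I would take $H_2'$ to be the additive group of $H_2$ endowed with the $F_1$-scalar multiplication $\alpha \scmu u = \alpha^\sigma u$, and set $\tau \colon H_2 \to H_2',\, u \mapsto u$, which is $\sigma^{-1}$-linear.

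For the Hermitian form on $H_2'$, I would define $\herm{u}{v}_2' := \herm{u}{v}_2^{\sigma^{-1}} \mu$ with $\mu := (\lambda^{-1})^{\sigma^{-1}}$. A direct computation then gives $\herm{(\tau \circ \phi)(u)}{(\tau \circ \phi)(v)}_2' = (\herm{u}{v}_1^\sigma \lambda)^{\sigma^{-1}} \mu = \herm{u}{v}_1$, so that $\tau \circ \phi$ will be unitary as soon as $\herm{\cdot}{\cdot}_2'$ has been verified to be a Hermitian form on $H_2'$ over the $\ast$-sfield $(F_1, \ast_1)$. Linearity in the first slot with respect to the $\scmu$-action, and anisotropy, are immediate from the corresponding properties of $\herm{\cdot}{\cdot}_2$ together with $\mu \neq 0$.

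The delicate point is conjugate-linearity in the second argument and symmetry, where the two identities from Remark~\ref{rem:quasiunitary}, namely $\alpha^{\sigma\ast_2} = \lambda^{-1} \alpha^{\ast_1\sigma} \lambda$ and $\lambda^{\ast_2} = \lambda$, are essential. Applying $\sigma^{-1}$ to the first yields $\alpha^{\sigma\ast_2\sigma^{-1}} = \mu \alpha^{\ast_1} \mu^{-1}$, which is precisely what is needed to rewrite the scalar that appears when one moves $\alpha \scmu v$ out of the second slot in terms of $\ast_1$. Symmetry then reduces, via the same identity applied at $\alpha = \lambda^{\sigma^{-1}}$, to the relation $\mu^{\ast_1} = \mu$, which in turn follows from $\lambda^{\ast_2} = \lambda$ together with $\mu^{-1} = \lambda^{\sigma^{-1}}$. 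I expect this bookkeeping between $\sigma$, $\ast_1$, $\ast_2$ and $\lambda$ to be the main, and essentially the only, obstacle. Once it is settled, $t := \P(\tau)$ is an orthoisomorphism $\P(H_2) \to \P(H_2')$, because its set-level map is a bijection and by construction $\herm{u}{v}_2' = 0$ if and only if $\herm{u}{v}_2 = 0$; and $t \circ f = \P(\tau \circ \phi)$ is then induced by the unitary map $\tau \circ \phi$, as required.
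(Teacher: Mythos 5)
Your proposal is correct and takes essentially the same route as the paper: the same rescalarised space $H_2'$, the same form (your $\herm{u}{v}_2^{\sigma^{-1}}\mu$ with $\mu=(\lambda^{-1})^{\sigma^{-1}}$ is identical to the paper's $(\herm{u}{v}_2\,\lambda^{-1})^{\sigma^{-1}}$ since $\sigma^{-1}$ is multiplicative), and the same identity map $\tau$. The paper packages the verification by exhibiting the involution $\alpha^{\ast_2'}=\lambda^{\sigma^{-1}}\alpha^{\sigma\ast_2\sigma^{-1}}(\lambda^{\sigma^{-1}})^{-1}$, which by Remark~\ref{rem:quasiunitary} coincides with $\ast_1$ --- exactly the identity you isolate and correctly reduce to $\mu^{\ast_1}=\mu$.
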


\begin{proof}
Let again $\sigma \colon F_1 \to F_2$ be the isomorphism associated with $\phi$ and let $\lambda \in F_2$ be such that~(\ref{fml:quasiunitary}) holds for $u, v \in H_1$. Let $H_2'$ be the linear space over $F_1$ that is constructed from $H_2$ as in the proof of Lemma~\ref{lem:quasilinear-to-linear}. This time, we define $\herm{\cdot}{\cdot}_2'$ by $\herm{u}{v}_2' = (\herm{u}{v}_2 \, \lambda^{-1})^{\sigma^{-1}}$ for $u, v \in H_2$.

Recall that, by Remark~\ref{rem:quasiunitary}, $\lambda = \lambda^{\ast_2}$. Hence $\ast_2'$ given by $\alpha^{\ast_2'} = \lambda^{\sigma^{-1}} \alpha^{\sigma \ast_2 \sigma^{-1}} (\lambda^{\sigma^{-1}})^{-1}$ is an involutory antiautomorphism on $F_1$. Moreover, a somewhat tedious but straightforward calculation shows that $\herm{\cdot}{\cdot}_2'$ is a Hermitian form on $H_2'$ w.r.t.\ $\ast_2'$.

Again, $\tau \colon H_2 \to H_2' \komma u \mapsto u$ is $\sigma^{-1}$-linear and hence $\tau \circ \phi$ is linear. Moreover, $\tau \circ \phi$ preserves the Hermitian form, that is, it is a unitary map. Clearly, $t = \P(\tau)$ is an orthoisomorphism. As $t \circ f = \P(\tau \circ \phi)$, the lemma follows.
\end{proof}

The following consequence of Theorem~\ref{thm:Wigner-iso} is also shown in \cite{PaVe3}. By an {\it ortho\-automorphism}, we mean an orthoisomorphism of an orthoset with itself.

\begin{corollary} \label{cor:Wigner-auto}
Let $H$ be an at least $3$-dimensional Hermitian space and let $f$ be an orthoautomorphism of $\P(H)$. Assume moreover that there is an at least $2$-dimen\-sional subspace $S$ of $H$ such that $f|_{\P(S)} = \id_{\P(S)}$. Then there is a unique unitary map $\phi \colon H \to H$ inducing $f$ such that $\phi|_S = \id_S$.
\end{corollary}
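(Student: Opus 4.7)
My plan is to obtain $\phi$ by first invoking Theorem~\ref{thm:Wigner-iso} to get a quasiunitary map $\tilde\phi \colon H \to H$ with $\P(\tilde\phi) = f$, and then to rescale $\tilde\phi$ so that it fixes $S$ pointwise. The assumption $f|_{\P(S)} = \id_{\P(S)}$ tells us that for every $u \in S \setminus \{0\}$ there is a scalar $\lambda_u \in F \setminus \{0\}$ with $\tilde\phi(u) = \lambda_u u$. I would then use a standard linear-independence argument: pick two linearly independent $u,v \in S$; since quasilinear maps are additive, $\lambda_{u+v}(u+v) = \tilde\phi(u+v) = \tilde\phi(u) + \tilde\phi(v) = \lambda_u u + \lambda_v v$, and comparing coefficients yields $\lambda_u = \lambda_v = \lambda_{u+v}$. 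Extending this to any two nonzero elements of $S$ (through a common third vector linearly independent of each), there is a single scalar $\lambda \in F \setminus \{0\}$ with $\tilde\phi(u) = \lambda u$ for all $u \in S$. By Lemma~\ref{lem:uniqueness-of-quasilinear-maps}, $\phi := \lambda^{-1} \tilde\phi$ is again quasiunitary and induces $f$, and now $\phi|_S = \id_S$.

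The main step, and really the only place I expect any thought to be needed, is to promote $\phi$ from quasiunitary to unitary using the fact that it fixes $S$. Let $\sigma$ be the sfield automorphism associated with $\phi$. For any $\alpha \in F$ and any fixed nonzero $u \in S$, the vector $\alpha u$ lies in $S$ and hence $\alpha u = \phi(\alpha u) = \alpha^\sigma \phi(u) = \alpha^\sigma u$, whence $\alpha^\sigma = \alpha$. Thus $\sigma = \id$, and $\phi$ is linear. For the scalar factor $\mu \in F \setminus \{0\}$ of~(\ref{fml:quasiunitary}), choose $u, v \in S$ with $\herm{u}{v} \neq 0$ (which exists because $S$ is at least $2$-dimensional and its restricted form is anisotropic, so non-degenerate); then $\herm{u}{v} = \herm{\phi(u)}{\phi(v)} = \herm{u}{v} \, \mu$, giving $\mu = 1$. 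Hence $\phi$ is unitary.

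For uniqueness, suppose $\phi_1, \phi_2 \colon H \to H$ are two unitary maps inducing $f$ with $\phi_i|_S = \id_S$. Since $\phi$ has rank $\geq 3 \geq 2$, Lemma~\ref{lem:uniqueness-of-quasilinear-maps} provides a $\kappa \in F \setminus \{0\}$ with $\phi_2 = \kappa \phi_1$. Evaluating at any $u \in S \setminus \{0\}$ gives $u = \kappa u$, so $\kappa = 1$ and $\phi_1 = \phi_2$.

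I do not anticipate serious obstacles: the Fundamental-Theorem-style results from earlier in the section do all the heavy lifting, and the only slightly delicate point is ensuring that the scalars $\lambda_u$ coalesce to a single $\lambda$, which reduces to exploiting additivity together with the $\dim S \geq 2$ hypothesis.
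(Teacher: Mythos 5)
Your proof is correct and follows essentially the same route as the paper's: invoke Theorem~\ref{thm:Wigner-iso} to get a quasiunitary $\tilde\phi$ inducing $f$, rescale so that it fixes $S$ pointwise, note that fixing the at least $2$-dimensional subspace $S$ forces the associated automorphism to be the identity and the factor $\lambda$ to be $1$, and obtain uniqueness from the scalar-uniqueness lemma. The only cosmetic difference is that you reprove by hand (via additivity) that $\tilde\phi|_S$ is a scalar multiple of $\id_S$, whereas the paper gets this in one step from Lemma~\ref{lem:uniqueness-of-semilinear-maps} applied to $\tilde\phi|_S$ and the inclusion of $S$ into $H$.
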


\begin{proof}
By Theorem~\ref{thm:Wigner-iso}, there is quasiunitary map $\psi \colon H \to H$ inducing $f$. Moreover, $\P(\psi)$ is the identity on $\P(S)$, hence by Lemma~\ref{lem:uniqueness-of-semilinear-maps} there is a non-zero scalar $\kappa$ such that $\psi|_S = \kappa \, \id_S$. Put $\phi = \kappa^{-1} \psi$. By Lemma~\ref{lem:uniqueness-of-quasilinear-maps}, also $\phi$ is quasiunitary. Moreover, $\phi|_S = \id_S$ is unitary and hence $\phi$ itself is unitary. The uniqueness of $\phi$ is again clear from Lemma~\ref{lem:uniqueness-of-semilinear-maps}.
\end{proof}

\section{Orthomodular spaces and linear Dacey spaces}
\label{sec:Orthomodular-spaces-Dacey-spaces}

The condition on a map to preserve the orthogonality relation is not only of interest if the map is bijective. Consider a Hermitian space $H$ and an orthoclosed subspace $S$ of $H$. The most elementary examples of maps between $S$ and $H$ are, first, the inclusion map $\iota \colon S \to H$ and, second, a map $\sigma \colon H \to S$ such that $\sigma|_S = \id_S$ and $\kernel \sigma = S\c$. The map $\iota$ is linear but not necessarily adjointable; a linear map $\sigma$ with the indicated properties might not exist. We deal in this section with Hermitian spaces that behave in this respect in the desired way.

\begin{definition}
An {\it orthomodular space} is a Hermitian space $H$ such that every orthoclosed subspace $S$ of $H$ is splitting.
\end{definition}

\begin{proposition} \label{prop:orthomodular-space}
For a Hermitian space $H$, the following are equivalent.
\begin{itemize}

\item[\rm (a)] $H$ is an orthomodular space.

\item[\rm (b)] For any orthoclosed subspace $S$ of $H$, there is a linear map $\sigma \colon H \to S$ such that $\sigma|_S = \id_S$ and $\kernel \sigma = S\c$.

\item[\rm (c)] For any orthoclosed subspace $S$ of $H$, the inclusion map $\iota \colon S \to H$ is adjointable.

\end{itemize}
In this case, let $S$ be an orthoclosed subspace of $H$. Then the map $\sigma \colon H \to S$ as specified in {\rm (b)} is the adjoint of the inclusion map $\iota \colon S \to H$.
\end{proposition}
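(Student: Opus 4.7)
The strategy is to prove the cycle (a) $\Rightarrow$ (b) $\Rightarrow$ (c) $\Rightarrow$ (a), handling the closing (c) $\Rightarrow$ (a) jointly with the identification of $\sigma$ as the adjoint of $\iota$.

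For (a) $\Rightarrow$ (b), I would take an orthoclosed subspace $S$ and use the splitting $H = S \oplus S\c$ to define $\sigma$ as the linear projection onto $S$ with kernel $S\c$; clearly $\sigma|_S = \id_S$ and $\kernel \sigma = S\c$.

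For (b) $\Rightarrow$ (a), given $\sigma \colon H \to S$ as in (b), for any $u \in H$ note that $\sigma(u - \sigma(u)) = \sigma(u) - \sigma(u) = 0$, so $u = \sigma(u) + (u - \sigma(u))$ with $\sigma(u) \in S$ and $u - \sigma(u) \in \kernel \sigma = S\c$. Since $S \cap S\c = \{0\}$ by anisotropy, $H = S \oplus S\c$, i.e.\ $S$ is splitting.

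For (b) $\Rightarrow$ (c), I verify that the map $\sigma$ from (b) acts as the algebraic adjoint of $\iota$. Given $u \in S$ and $v \in H$, decompose $v = \sigma(v) + (v - \sigma(v))$, where $v - \sigma(v) \in S\c$, so $\herm{u}{v - \sigma(v)} = 0$ and hence $\herm{\iota(u)}{v} = \herm{u}{v} = \herm{u}{\sigma(v)}$. This is the identity~(\ref{fml:adjoint-of-linear-map}), and since for every $\rho = \herm{\cdot}{v} \in H^\ast$ we obtain $\rho \circ \iota = \herm{\cdot}{\sigma(v)} \in S^\ast$, the inclusion $\iota$ is adjointable in the sense of Definition~\ref{def:adjoint-of-linear-map}, with $\iota\adj = \sigma$.

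For (c) $\Rightarrow$ (b), let $\sigma = \iota\adj \colon H \to S$ be the adjoint of the inclusion, characterised by $\herm{u}{v} = \herm{u}{\sigma(v)}$ for $u \in S$ and $v \in H$. Taking $v \in S$, we get $\herm{u}{v - \sigma(v)} = 0$ for every $u \in S$; since $v - \sigma(v) \in S$, anisotropy yields $\sigma(v) = v$, so $\sigma|_S = \id_S$. For general $v \in H$, the identity gives $v - \sigma(v) \in S\c$; conversely, if $v \in S\c$, then $\herm{u}{\sigma(v)} = \herm{u}{v} = 0$ for all $u \in S$, forcing $\sigma(v) \in S \cap S\c = \{0\}$. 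Hence $\kernel \sigma = S\c$, establishing (b) and simultaneously verifying the additional claim that $\sigma$ is the adjoint of $\iota$. This closes the cycle, since (b) has already been shown to imply (a).

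There is no real obstacle; the only point to handle carefully is the bridging between the two uses of the word \emph{adjointable}: the definition via the dual space (Definition~\ref{def:adjoint-of-linear-map}) versus the working identity~(\ref{fml:adjoint-of-linear-map}). Lemma~\ref{lem:dual-space}, which identifies $H\adj$ with $\{\herm{\cdot}{v} : v \in H\}$, makes this transition automatic, so the argument reduces to elementary linear manipulations once the splitting is available.
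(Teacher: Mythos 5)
Your proposal is correct. The paper dismisses this proposition with ``This is easily checked,'' and your cycle (a)$\Rightarrow$(b)$\Rightarrow$(c)$\Rightarrow$(b)$\Rightarrow$(a), together with the use of Lemma~\ref{lem:dual-space} to translate between Definition~\ref{def:adjoint-of-linear-map} and the identity~(\ref{fml:adjoint-of-linear-map}), is exactly the routine verification the authors have in mind.
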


\begin{proof}
This is easily checked.
\end{proof}

The orthosets relevant in this context are the following \cite{Dac,Wlc}.

\begin{definition}
A {\it Dacey space} is an orthoset $X$ such that the following holds: if $A$ and $B$ are subspaces of $X$ such that $B \subseteq A$, then $B$ is a subspace of $A$.
\end{definition}

We note that Dacey spaces can alternatively be defined lattice-theoretically. Given an orthoset $X$, we denote by ${\mathsf C}(X)$ the ortholattice of subspaces of $X$; cf., e.g., \cite[Section 2]{PaVe4}.

\begin{lemma} \label{lem:Dacey-orthomodularity}
An orthoset $X$ is a Dacey space if and only if ${\mathsf C}(X)$ is orthomodular.
\end{lemma}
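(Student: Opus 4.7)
The plan is to reduce the Dacey condition to an equation in the ortholattice $\mathsf{C}(X)$ and then match it with a well-known equivalent form of orthomodularity. First I would reformulate both sides as identities involving $\cap$ and $\perp$ in $X$. Given subspaces $B\subseteq A$ of $X$, to say that $B$ is a subspace of $A$ (in the orthoset sense inherited from $X$) means that $B$ is orthoclosed inside $A$. The orthocomplement of $B$ relative to $A$ is $B^{\perp_A}=A\cap B\c$ and its double is $A\cap(A\cap B\c)\c$. Since the inclusion $B\subseteq A\cap (A\cap B\c)\c$ is automatic, the Dacey condition is simply: for all $B\leq A$ in $\mathsf{C}(X)$,
\[
 B \;=\; A\cap(A\cap B\c)\c.
\]

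For the direction ``$\mathsf{C}(X)$ orthomodular $\Rightarrow$ $X$ Dacey'', I would use that orthomodularity is self-dual, so applied to $A\c\leq B\c$ it yields $B\c = A\c\vee(A\cap B\c)$. Taking orthocomplements and using that $\mathsf{C}(X)$ is an ortholattice (so De Morgan holds), I would obtain $B = A\cap(A\cap B\c)\c$, which is exactly the Dacey identity above.

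For the converse, I would use the standard weak form of orthomodularity: an ortholattice is orthomodular iff $B\leq A$ and $A\wedge B\c = 0$ force $A=B$. Given $B\leq A$, put $C=B\vee(A\cap B\c)$, so $B\leq C\leq A$. A short computation shows $A\cap C\c=\{0\}$: any $x$ in it lies in $A\cap B\c$ and is orthogonal to all of $A\cap B\c$, hence $x\perp x$ and so $x=0$ by (O2). Then the Dacey property applied to $C\leq A$ gives $C=A\cap (A\cap C\c)\c = A\cap \{0\}\c = A$, which is the orthomodular identity $A = B\vee(A\cap B\c)$.

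The only delicate point is being careful that the ``subspace of $A$'' in the Dacey definition uses the orthogonality relation of $X$ restricted to $A$, so that double-orthocomplementation inside $A$ coincides with $A\cap(A\cap(\,\cdot\,)\c)\c$ computed in $X$; once that translation is made, both implications reduce to elementary ortholattice manipulations, and I do not anticipate any substantive obstacle.
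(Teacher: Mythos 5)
Your argument is correct and complete. The paper itself gives no proof of this lemma --- it only cites \cite[Lemma 2.15]{PaVe4} --- so you have supplied a self-contained argument where the authors rely on an external reference. The key translation is exactly right: for orthoclosed $B \subseteq A$, the relative orthocomplement in the orthoset $A$ is $A \cap B\c$, so ``$B$ is a subspace of $A$'' becomes the identity $B = A \cap (A \cap B\c)\c$, and the automatic inclusion $B \subseteq A \cap (A\cap B\c)\c$ means only the reverse containment is at stake. The forward direction via orthomodularity applied to $A\c \leq B\c$ plus De Morgan is sound (using that meets in ${\mathsf C}(X)$ are intersections and that $B\cc = B$), and the converse correctly produces the orthomodular law $A = B \vee (A \cap B\c)$ by applying the Dacey identity to $C = B \vee (A\cap B\c) \leq A$ after checking $A \cap C\c = \{0\}$ via (O2). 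One cosmetic remark: you announce the ``weak form'' of orthomodularity but never actually invoke it --- your computation derives $C = A$ directly from the Dacey identity --- so that sentence could be dropped without loss.
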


\begin{proof}
See \cite[Lemma 2.15]{PaVe4}.
\end{proof}

For linear orthosets, the property of being a Dacey space can be expressed as follows.

\begin{lemma} \label{lem:linear-Dacey-space}
Let $X$ be a linear orthoset. Then $X$ is a Dacey space if and only if the following holds: for any subspace $A$ of $X$ and any $x \in X$, there are $y \in A$ and $z \in A\c$ such that $x \in \{y, z\}\cc$.
\end{lemma}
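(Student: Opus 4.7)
The plan is to exploit Lemma~\ref{lem:Dacey-orthomodularity} in both directions, recasting the Dacey condition as orthomodularity of ${\mathsf C}(X)$.

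For the ``if'' direction, assume the decomposition property holds, let $B \leq A$ be subspaces of $X$ and let $x \in A$. Applying the hypothesis to $B$ and $x$ yields $y \in B$ and $z \in B\c$ with $x \in \{y, z\}\cc$. The sub-cases where $x = 0$, where $y = 0$ or $z = 0$, or where $x = y$ lead immediately to $x \in B$ or $x \in A \cap B\c$ (using the easily verified fact that in a linear orthoset $\{w\}\cc = \{0, w\}$ for every proper $w$). In the remaining case, $x, y, z$ are proper and $x \neq y$, so $\{y, z\}\cc$ is a 2-dimensional subspace (by linearity of $X$) containing the distinct proper elements $x$ and $y$; hence $\{x, y\}\cc = \{y, z\}\cc$, and since $x, y \in A$ this subspace lies in $A$, forcing $z \in A \cap B\c$ and therefore $x \in B \vee (A \cap B\c)$.

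For the ``only if'' direction, assume $X$ is linear and Dacey. The non-trivial case is when $x$ is proper with $x \notin A$ and $x \notin A\c$. I set $B = A \vee \{x\}\cc$ and invoke orthomodularity of ${\mathsf C}(X)$ to write $B = A \vee C$ with $C = B \cap A\c \neq \{0\}$. The key subsidiary claim is that $C$ is an atom $\{0, z\}$: the covering property of ${\mathsf C}(X)$, furnished by linearity of $X$, implies that $B$ covers $A$; if $C$ contained orthogonal proper $z_1, z_2$, the subspace $A \vee \{0, z_1\}$ would also cover $A$ and lie in $B$, forcing $A \vee \{0, z_1\} = B$, and then $z_2 \in B$ together with $z_2 \perp A$ and $z_2 \perp z_1$ would place $z_2 \in B\c$, yielding $z_2 = 0$ and a contradiction.

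Having $C = \{0, z\}$, I consider the 2-dimensional subspace $L = \{x, z\}\cc \leq B$. Linearity furnishes a unique proper $y \in L$ with $y \perp z$, so $L = \{y, z\}\cc$ and in particular $x \in \{y, z\}\cc$. To see $y \in A$, note that in the orthomodular lattice ${\mathsf C}(X)$ the three subspaces $A$, $\{0, z\}$, and $\{z\}\c$ pairwise commute: $A \leq \{z\}\c$ since $z \in A\c$, and orthogonal subspaces always commute. The distributive law then gives $B \cap \{z\}\c = (A \vee \{0, z\}) \cap \{z\}\c = A$, and since $y \in L \cap \{z\}\c \leq B \cap \{z\}\c$ we conclude $y \in A$, completing the decomposition. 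The principal obstacle is the atom claim for $C$, which rests on the covering property for ${\mathsf C}(X)$ — a feature of linear orthosets that I would either establish directly from the linearity axiom or cite from the authors' earlier work such as \cite{PaVe4}.
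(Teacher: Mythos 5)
Your argument is correct in substance, but it takes a genuinely different route from the paper. The paper's proof is essentially a reduction-plus-citation: it records that ${\mathsf C}(X)$ is an atomistic ortholattice with the covering property whose atoms are the sets $\{0,x\}$ (citing \cite{PaVe3} and \cite{PaVe1}), invokes Lemma~\ref{lem:Dacey-orthomodularity} to translate the Dacey condition into orthomodularity of ${\mathsf C}(X)$, and then appeals to \cite[Lemma (30.7)]{MaMa}, which is exactly the lattice-theoretic form of the stated equivalence (an orthocomplemented AC-lattice is orthomodular iff every atom lies below $q \vee r$ for some atoms $q \leq a$, $r \leq a\c$). You use the same translation and the same background facts (atomisticity, covering property, atoms of the form $\{0,x\}$), but instead of citing Maeda--Maeda you reprove that equivalence directly: the ``if'' direction via the observation that a rank-$2$ subspace is generated by any two of its distinct proper elements, and the ``only if'' direction via the covering property and Foulis--Holland. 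What this buys is a self-contained proof whose only external inputs are the structural facts about ${\mathsf C}(X)$ for linear orthosets; what it costs is length, and the need to be careful at exactly the points where the general lattice lemma hides work.

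One such point needs patching: in your subsidiary claim that $C = B \cap A\c$ is an atom, you only exclude the case that $C$ contains two \emph{orthogonal} proper elements, which by itself shows only that $C$ has rank $1$, not that $C = \{0,z\}$. This is repaired by noting that in a linear orthoset any orthoclosed set containing two distinct proper non-orthogonal elements $z_1, z_2$ also contains an orthogonal pair (linearity yields a proper $z \perp z_1$ with $\{z_1,z\}\c = \{z_1,z_2\}\c$, whence $z \in \{z_1,z_2\}\cc \subseteq C$); alternatively, once $B = A \vee \{0,z\}$ is known from the covering argument, Foulis--Holland gives $C = (A \vee \{0,z\}) \cap A\c = \{0,z\}$ outright, exactly as in your later computation of $B \cap \{z\}\c$. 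A second, very minor point: your appeal to linearity to produce $y \perp z$ in $L = \{x,z\}\cc$ tacitly uses $x \notperp z$; this holds in the case under consideration since $x \perp z$ would give $x \in B \cap \{z\}\c = A$, contradicting $x \notin A$. With these two remarks supplied, the proof is complete.
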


\begin{proof}
By \cite[Lemma 4.4 and Corollary 3.11]{PaVe3}, ${\mathsf C}(X)$ is an atomistic ortholattice with the covering property. By \cite[Lemma 5.3]{PaVe1}, the atoms of ${\mathsf C}(X)$ are $\{x\}\cc = \{0,x\}$, $x \in X \setminus \{0\}$.

By Lemma~\ref{lem:Dacey-orthomodularity}, $X$ is Dacey if and only if ${\mathsf C}(X)$ is orthomodular. Hence the assertion holds by \cite[Lemma~(30.7)]{MaMa}.
\end{proof}

We arrive at the following well-known characterisation of orthomodular spaces, see, e.g., \cite[Theorem (2.8)]{Piz2}.

\begin{proposition} \label{prop:orthomodular-space-Dacey-space}
A Hermitian space $H$ is orthomodular if and only if $\P(H)$ is a Dacey space.
\end{proposition}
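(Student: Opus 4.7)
The plan is to exploit Lemma~\ref{lem:linear-Dacey-space}, which reformulates the Dacey property for the linear orthoset $\P(H)$ as the following concrete condition: for every subspace $A$ of $\P(H)$ and every $x \in \P(H)$, there exist $y \in A$ and $z \in A\c$ such that $x \in \{y,z\}\cc$. Combined with Lemma~\ref{lem:perp-determines-star}, which identifies $\{\lin u, \lin v\}\cc$ with $\lin u \star \lin v = \{\lin w : w \in \lin{u,v}\}$, this reduces the whole question to whether an arbitrary vector of $H$ decomposes as a sum of a vector from an orthoclosed subspace $S$ and a vector from $S\c$. At the outset I would invoke the (standard) correspondence that the subspaces of the orthoset $\P(H)$ are exactly the sets $\P(S)$ for $S$ an orthoclosed linear subspace of $H$, and that $\P(S)\c = \P(S\c)$; this is the same bookkeeping already used, e.g., in the proof of Theorem~\ref{thm:phi-Pphi-adjointable}.

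For the forward implication, I would assume $H$ is orthomodular and verify the criterion of Lemma~\ref{lem:linear-Dacey-space}. Given a subspace $A = \P(S)$ of $\P(H)$ and $x = \lin u \in \P(H)$, orthomodularity yields the decomposition $u = u_1 + u_2$ with $u_1 \in S$ and $u_2 \in S\c$. Setting $y = \lin{u_1}$ and $z = \lin{u_2}$, we have $y \in A$, $z \in A\c$, and $u \in \lin{u_1, u_2}$, so that $x \in \lin{u_1}\star\lin{u_2} = \{y,z\}\cc$. The cases $u_1=0$ or $u_2=0$ are harmless because $0 \in A \cap A\c$.

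For the converse, I would start from the assumption that $\P(H)$ is a Dacey space and pick an arbitrary orthoclosed linear subspace $S$ of $H$; the goal is to show $H = S \oplus S\c$. For each $u \in H$, Lemma~\ref{lem:linear-Dacey-space} applied to $A = \P(S)$ and $x = \lin u$ produces $y = \lin v \in \P(S)$ and $z = \lin w \in \P(S\c)$ with $\lin u \in \lin v \star \lin w$, i.e.\ $u = \alpha v + \beta w$ for some scalars $\alpha, \beta$. Since $\alpha v \in S$ and $\beta w \in S\c$, this gives $H = S + S\c$. Anisotropy of the Hermitian form forces $S \cap S\c = \{0\}$, so the sum is direct; hence $S$ is splitting.

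Both directions are essentially formal once the right dictionary between orthoset subspaces of $\P(H)$ and orthoclosed linear subspaces of $H$ is in place; the only mild obstacle is making that dictionary (and the translation of $\perp$-closures into linear spans via Lemma~\ref{lem:perp-determines-star}) explicit before applying Lemma~\ref{lem:linear-Dacey-space}.
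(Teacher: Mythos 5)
Your argument is correct and is essentially the paper's own proof: both reduce the statement to the criterion of Lemma~\ref{lem:linear-Dacey-space} via the dictionary $A = \P(S)$, $\P(S)\c = \P(S\c)$ and the identification $\{\lin v,\lin w\}\cc = \lin v \star \lin w$ from Lemma~\ref{lem:perp-determines-star}, translating the splitting decomposition $u = u_1 + u_2$ into membership $\lin u \in \{\lin{u_1},\lin{u_2}\}\cc$. You merely spell out a few details (the degenerate cases and the directness of the sum via anisotropy) that the paper leaves implicit.
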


\begin{proof}
For $H$ to be orthomodular means that for any orthoclosed subspace $S$ of $H$ and any $u \in H$ there are $v \in S$ and $w \in S\c$ such that $u = v + w$. This is case iff, for any orthoclosed subspace $\P(S)$ of $\P(H)$ and $\lin u \in \P(H)$, there are $\lin v \in \P(S)$ and $\lin w \in \P(S)\c = \P(S\c)$ such that $\lin u \in \{ \lin v, \lin w \}\cc$. Hence the assertion follows from Lemma~\ref{lem:linear-Dacey-space}.
\end{proof}

\begin{theorem} \label{thm:orthomodular-spaces-Dacey-spaces}
Let $H$ be an orthomodular space. Then $\P(H)$ is a linear Dacey space.

Conversely, let $X$ be a linear Dacey space of rank $\geq 4$. There there is an orthomodular space $H$ and an orthoisomorphism between $X$ and $\P(H)$.
\end{theorem}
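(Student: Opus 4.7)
The statement is essentially a packaging of two earlier results, so my plan is to combine them with minimal extra work.

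For the forward direction, I would start from the assumption that $H$ is an orthomodular space. By Theorem~\ref{thm:Hermitian-spaces-linear-orthosets}, $\P(H)$, equipped with the induced orthogonality relation and the zero subspace, is already a linear orthoset. It then remains to invoke Proposition~\ref{prop:orthomodular-space-Dacey-space}, which says precisely that a Hermitian space $H$ is orthomodular if and only if $\P(H)$ is a Dacey space; since $H$ is orthomodular by hypothesis, $\P(H)$ is Dacey. Putting the two properties together, $\P(H)$ is a linear Dacey space, as required.

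For the converse direction, let $X$ be a linear Dacey space of rank $\geq 4$. The rank hypothesis is exactly what enables the coordinatisation provided by Theorem~\ref{thm:Hermitian-spaces-linear-orthosets}: there is a Hermitian space $H$ and an orthoisomorphism $t \colon X \to \P(H)$. It then suffices to argue that $H$ is in fact orthomodular. Being a Dacey space is a property formulated purely in terms of subspaces of an orthoset and is therefore preserved under orthoisomorphism; hence $\P(H)$ inherits the Dacey property from $X$ via $t$. Applying Proposition~\ref{prop:orthomodular-space-Dacey-space} in the other direction, $H$ must be orthomodular. The orthoisomorphism $t$ is then the desired one.

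The only real verification hiding in the plan is that the Dacey property transfers along an orthoisomorphism, but this is immediate from the fact that an orthoisomorphism induces a bijection between subspaces that preserves inclusion and orthocomplementation, so the defining clause ``if $B \subseteq A$ are subspaces, then $B$ is a subspace of $A$'' transfers verbatim. No genuine obstacle arises; the content of the theorem has already been carried by Theorem~\ref{thm:Hermitian-spaces-linear-orthosets} and Proposition~\ref{prop:orthomodular-space-Dacey-space}, and this statement is a clean corollary combining the two.
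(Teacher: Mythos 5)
Your proposal is correct and matches the paper's own proof, which simply cites Theorem~\ref{thm:Hermitian-spaces-linear-orthosets} and Proposition~\ref{prop:orthomodular-space-Dacey-space}; you merely spell out the routine details, including the transfer of the Dacey property along an orthoisomorphism, which the paper leaves implicit.
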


\begin{proof}
This is clear from Theorem~\ref{thm:Hermitian-spaces-linear-orthosets} and Proposition~\ref{prop:orthomodular-space-Dacey-space}.
\end{proof}

For dimensions $\geq 4$, orthomodular spaces may thus be described as linear Dacey spaces. There is a further way of specifying the orthosets corresponding to orthomodular spaces, namely, by means of the adjointability of the inclusion maps. This was the idea that led to the representation theorem of orthomodular spaces in \cite{LiVe} and the following theorem is an alternative formulation of \cite[Theorem~1.8]{LiVe}.

We call an orthoset $X$ {\it irreducible} if $X \setminus \{0\}$ cannot be partitioned into two non-empty orthogonal subsets of $X$. Moreover, we call $X$ {\it Fr\' echet} if, for any proper elements $x, y \in X$, $\{x\}\c \subseteq \{y\}\c$ implies $x = y$.

\begin{theorem} \label{thm:orthomodular-space-as-orthoset}
Let $X$ be an irreducible Fr\' echet orthoset of rank $\geq 4$. Assume that for any subspace $A$ of $X$, the inclusion map $\iota \colon A \to X$ is adjointable. Then there is an orthomodular space $H$ and an orthoisomorphism between $X$ and $\P(H)$.
\end{theorem}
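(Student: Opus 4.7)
The plan is to show that $X$ is a linear Dacey space of rank $\geq 4$ and then to apply Theorem~\ref{thm:orthomodular-spaces-Dacey-spaces}. By hypothesis, for every subspace $A \subseteq X$ the inclusion $\iota_A \colon A \to X$ possesses an adjoint $\pi_A := \iota_A\adj \colon X \to A$; this adjoint is unique by Lemma~\ref{lem:unique-adjoint}, since Fr\' echet implies irredundancy, and is characterised by $a \perp x \Leftrightarrow a \perp \pi_A(x)$ for $a \in A$ and $x \in X$. One should picture $\pi_A$ as an orthogonal projection onto $A$.

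First I would verify linearity. The Fr\' echet property forces $\{y\}\cc = \{0, y\}$ for every proper $y$, because any proper $z \in \{y\}\cc$ satisfies $\{y\}\c \subseteq \{z\}\c$ and hence $z = y$. Let now $x, y$ be distinct proper elements. In the case $x \notperp y$, set $z := \pi_{\{x\}\c}(y) \in \{x\}\c$; then $z$ is proper (for $z = 0$ would give $\{x\}\c \subseteq \{y\}\c$ and hence $x = y$ by Fr\' echet), and the defining property of $\pi$ yields $\{x, y\}\c = \{x\}\c \cap \{y\}\c = \{x\}\c \cap \{z\}\c = \{x, z\}\c$. In the case $x \perp y$, irreducibility together with the rank assumption supplies some $w \notperp x$, and I would take $z := \pi_{\{x, y\}\cc}(w)$. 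Applying the adjoint axiom for $B := \{x, y\}\cc$ with $b := x \in B$ shows $z \notperp x$, and trivially $z \in B$. The remaining, more delicate point is to establish $\{x, z\}\cc = \{x, y\}\cc$; this relies on the coherent behaviour of the whole family $(\pi_C)_C$ restricted to $B$, along the lines of \cite{LiVe}.

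Second I would verify the Dacey property via Lemma~\ref{lem:linear-Dacey-space}: for a subspace $A$ and $x \in X$, I claim $x \in \{\pi_A(x), \pi_{A\c}(x)\}\cc$. Given $w$ orthogonal to both $\pi_A(x)$ and $\pi_{A\c}(x)$, two applications of the adjoint axiom pass from $w \perp \pi_A(x)$ first to $\pi_A(w) \perp \pi_A(x)$ and then, via the defining property of $\pi_A$, to $\pi_A(w) \perp x$; symmetrically $\pi_{A\c}(w) \perp x$. Inferring $w \perp x$ from these two facts, however, is tantamount to the Dacey property itself, so the naive argument is circular. Breaking this circularity is the principal obstacle, and requires the coherence of the entire family $(\pi_A)_A$ viewed as a system of Sasaki maps; this is precisely the content of \cite[Theorem~1.8]{LiVe}.

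Once linearity and the Dacey property are established, Theorem~\ref{thm:orthomodular-spaces-Dacey-spaces} delivers the desired orthomodular space $H$ together with an orthoisomorphism $X \cong \P(H)$.
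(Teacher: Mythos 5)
Your overall strategy (verify that $X$ is a linear Dacey space and invoke Theorem~\ref{thm:orthomodular-spaces-Dacey-spaces}) is a reasonable plan and differs from the paper's route, and your treatment of the non-orthogonal case of linearity (via $z = \pi_{\{x\}\c}(y)$) is correct. But as written the proposal has two genuine gaps, both of which you flag yourself and neither of which is closed. First, in the linearity check, the case $x \perp y$ is incomplete: after setting $z = \pi_B(w)$ with $B = \{x,y\}\cc$ you still owe the identity $\{x,z\}\c = \{x,y\}\c$ (equivalently $\{x,z\}\cc = B$), and you must also rule out $z \in \{0,x\}$, which depends on the choice of $w$; none of this is supplied. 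Second, and more seriously, the Dacey property is exactly the hard content of the theorem, and your attempted argument for $x \in \{\pi_A(x), \pi_{A\c}(x)\}\cc$ is, as you concede, circular. Deferring both obstacles to ``the coherence of the family $(\pi_A)_A$'' and to \cite[Theorem~1.8]{LiVe} does not repair this: that theorem is phrased in terms of Sasaki maps, and the present statement is announced as an \emph{alternative formulation} of it, so the translation between the two sets of hypotheses is precisely what a proof here must establish. A proof whose two key steps are each declared to be ``the principal obstacle'' is an outline, not a proof.

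For comparison, the paper avoids the linear-orthoset representation entirely and argues lattice-theoretically: the Fr\'echet property gives that ${\mathsf C}(X)$ is atomistic, the adjointability of all inclusion maps gives that ${\mathsf C}(X)$ is an orthomodular lattice with the covering property, and irreducibility of $X$ gives irreducibility of ${\mathsf C}(X)$ (all by results of \cite{PaVe4}); the representation theorem \cite[Theorem~(34.5)]{MaMa} then produces a Hermitian space $H$ with $X$ orthoisomorphic to $\P(H)$, and Proposition~\ref{prop:orthomodular-space-Dacey-space} identifies $H$ as orthomodular. If you want to pursue your route, the substance you are missing is essentially \cite[Proposition~4.10]{PaVe4} --- that adjointability of all inclusion maps forces orthomodularity of ${\mathsf C}(X)$ --- which is where the non-circular argument lives.
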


\begin{proof}
As $X$ is Fr\' echet, it follows by \cite[Lemma 2.10]{PaVe4} that ${\mathsf C}(X)$ is atomistic. By \cite[Proposition 4.10]{PaVe4}, ${\mathsf C}(X)$ is an orthomodular lattice with the covering property. By \cite[Lemma 2.14]{PaVe4}, ${\mathsf C}(X)$ is irreducible. Hence, by \cite[Theorem (34.5)]{MaMa}, there is a Hermitian space $H$ such that $X$ and $\P(H)$ are orthoisomorphic. By Proposition~\ref{prop:orthomodular-space-Dacey-space}, $H$ is orthomodular.
\end{proof}

Adding a transitivity condition, we may moreover characterise the infinite-dimension\-al classical Hilbert spaces; cf.~\cite[Theorem 5.3]{LiVe}.

\begin{theorem} \label{thm:classical-HS-as-lattice}
Let $X$ be an irreducible Fr\' echet orthoset of infinite rank. Assume that {\rm (i)}~for any subspace $A$ of $X$, the inclusion map $\iota \colon A \to X$ is adjointable and {\rm (ii)}~for any proper elements $x$ and $y$ of $X$, there is an orthoautomorphism $f$ of $X$ such that $f(x) = y$ and $f(z) = z$ \/ for any $z \perp x, y$. Then there is a Hilbert space $H$ over $\Reals$, $\Complexes$, or $\Quaternions$ and an orthoisomorphism between $X$ and $\P(H)$.
\end{theorem}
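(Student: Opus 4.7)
The plan is to reduce this statement to a known Solèr-type characterisation of the classical Hilbert spaces among orthomodular spaces. The first step is to apply Theorem~\ref{thm:orthomodular-space-as-orthoset}: hypothesis (i) together with irreducibility and the Fr\'echet property yields an orthomodular space $H$ and an orthoisomorphism $\iota \colon X \to \P(H)$. Since $X$ has infinite rank, $H$ is infinite-dimensional by Theorem~\ref{thm:Hermitian-spaces-linear-orthosets}. In view of this identification, it suffices to exhibit in $H$ an infinite family of mutually orthogonal vectors of equal Hermitian norm; by Sol\`er's theorem (applied in the lattice-theoretic form of \cite[Theorem (34.5)]{MaMa}, which gives the $\Reals,\Complexes,\Quaternions$ trichotomy once such a sequence is present in an infinite-dimensional irreducible orthomodular space), $H$ will then be a Hilbert space over one of the three classical sfields.

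To produce such a sequence, I would pick any infinite $\perp$-set $\{x_1, x_2, \ldots\} \subseteq X$, which exists because the rank of $X$ is infinite, and fix corresponding non-zero representatives $e_1, e_2, \ldots \in H$ with $x_i = \lin{e_i}$. For each $i \geq 2$, hypothesis (ii) applied to the pair $x_1, x_i$ provides an orthoautomorphism $f_i$ of $X$ with $f_i(x_1) = x_i$ and $f_i(z) = z$ for every $z \perp x_1, x_i$. Transported to $\P(H)$, each $f_i$ is an orthoautomorphism that is the identity on the orthoclosed subspace $\P(\lin{e_1, e_i}^\perp)$. Since $H$ is infinite-dimensional, $\lin{e_1, e_i}^\perp$ contains at least a $2$-dimensional subspace $S$ on which $f_i$ restricts to the identity; by Corollary~\ref{cor:Wigner-auto}, $f_i$ is therefore induced by a unique unitary map $\phi_i \colon H \to H$ with $\phi_i|_S = \id_S$.

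The main step is now to use unitarity to equalise norms. From $\phi_i(\lin{e_1}) = \lin{e_i}$ we obtain a scalar $\alpha_i \in F \setminus \{0\}$ with $\phi_i(e_1) = \alpha_i e_i$. Set $e_i' = \alpha_i e_i$ for $i \geq 2$ and $e_1' = e_1$; then $\lin{e_i'} = \lin{e_i} = x_i$, so the $e_i'$ remain mutually orthogonal. Because $\phi_i$ is unitary, $\herm{e_i'}{e_i'} = \herm{\phi_i(e_1)}{\phi_i(e_1)} = \herm{e_1}{e_1} = \herm{e_1'}{e_1'}$, and thus $\{e_1', e_2', \ldots\}$ is an infinite orthogonal family of vectors of common Hermitian norm.

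The main obstacle I anticipate is the careful invocation of Sol\`er's theorem, and in particular making sure that the lattice-theoretic hypotheses needed in~\cite[Theorem (34.5)]{MaMa} (irreducibility, atomisticity, covering property, orthomodularity, infinite length, existence of an infinite harmonic sequence of atoms) are all inherited from the orthoset-level data established above, via the properties of ${\mathsf C}(X)$ already recorded in the proofs of Lemma~\ref{lem:linear-Dacey-space} and Theorem~\ref{thm:orthomodular-space-as-orthoset}. Once the orthogonal family $\{e_i'\}$ of equal norm is at hand, the family $\{\lin{e_i'}\}$ is precisely a ``harmonic sequence'' of atoms in ${\mathsf C}(\P(H))$, which is exactly the input required, and the conclusion follows.
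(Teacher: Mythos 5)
Your proposal is correct and follows essentially the same route as the paper's proof: apply Theorem~\ref{thm:orthomodular-space-as-orthoset} to obtain an orthomodular space $H$, use hypothesis (ii) together with Corollary~\ref{cor:Wigner-auto} to produce unitary maps transporting a fixed vector to every relevant line, thereby obtaining an infinite orthogonal family of vectors of equal length, and conclude by Sol\`er's Theorem. One small correction: the $\Reals$/$\Complexes$/$\Quaternions$ trichotomy is Sol\`er's result \cite{Sol}, not \cite[Theorem (34.5)]{MaMa} --- the latter is only the representation by a Hermitian space, which is already built into Theorem~\ref{thm:orthomodular-space-as-orthoset}.
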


\begin{proof}
By Theorem~\ref{thm:orthomodular-space-as-orthoset}, there is an orthomodular space $H$ such that $\P(H)$ and $X$ are orthoisomorphic. Let $u, v \in H \setminus \{0\}$. By assumption, there is an orthoautomorphism $f$ of $\P(H)$ such that $f(\lin u) = \lin v$ and $f(\lin w) = \lin w$ for any $w \perp u, v$. By Corollary~\ref{cor:Wigner-auto}, $f$ is induced by a unitary map $\phi$ on $H$. We conclude that there is a non-zero scalar $\alpha$ such that each one-dimensional subspace of $H$ contains a vector of length $\alpha$. The assertion follows now from Sol\` er's Theorem \cite{Sol}.
\end{proof}

\section{Partial orthometric correspondences}
\label{sec:Partial-orthometries}

The aim of our final section is to generalise the results of Section~\ref{sec:Orthometric-correspondences}: we want to consider maps between Hermitian spaces that do not necessarily establish an orthometric correspondence between the whole spaces but only between certain subspaces. General Hermitian spaces are not eligible to this concern, we hence restrict our considerations to orthomodular spaces.

For a map $f \colon A \to B$ and sets $A_0 \subseteq A$ and $\image f \subseteq B_0 \subseteq B$, we denote by $f|_{A_0}^{B_0}$ the map $f$ restricted to $A_0$ and corestricted to $B_0$.

\begin{definition} \label{def:partial-quasiisometry}
Let $H_1$ be an orthomodular space over the sfield $F_1$ and $H_2$ an orthomodular space over the sfield $F_2$. Let $S_1$ be an orthoclosed subspace of $H_1$ and $S_2$ an orthoclosed subspace of $H_2$. Assume that $\phi \colon H_1 \to H_2$ is a quasilinear map such that $\kernel \phi = {S_1}\c$, $\image \phi = S_2$, and $\phi|_{S_1}^{S_2}$ is quasiunitary. Then we call $\phi$ a {\it partial quasiisometry}. If $\phi$ is linear and $\phi|_{S_1}^{S_2}$ is unitary, we call $\phi$ a {\it partial isometry}.
\end{definition}

Let $\phi \colon H_1 \to H_2$ be a partial isometry. Thanks to Proposition \ref{prop:orthomodular-space}, we may define its generalised inverse $\psi$ in the expected way. Let $S_1 = (\kernel \phi)\c$ and $S_2 = \image \phi$; then we require that $\kernel \psi = {S_2}\c$ and $\psi|_{S_2}^{S_1} = (\phi|_{S_1}^{S_2})^{-1}$.

\begin{proposition}
Let $\phi \colon H_1 \to H_2$ be a partial isometry between orthomodular spaces. Then $\phi$ is adjointable and $\phi\adj$ is the generalised inverse of $\phi$.
\end{proposition}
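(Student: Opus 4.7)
The plan is to exhibit the generalised inverse $\psi$ explicitly as a linear map $H_2 \to H_1$, verify by direct computation that it satisfies the characterising equation~(\ref{fml:adjoint-of-linear-map}), and then read off adjointability in the sense of Definition~\ref{def:adjoint-of-linear-map}.

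First I would use orthomodularity of $H_1$ and $H_2$, via Proposition~\ref{prop:orthomodular-space}, to obtain the direct sum decompositions $H_1 = S_1 \oplus {S_1}\c$ and $H_2 = S_2 \oplus {S_2}\c$, where $S_1 = (\kernel\phi)\c$ and $S_2 = \image\phi$. This lets me define $\psi \colon H_2 \to H_1$ by $\psi(v) = (\phi|_{S_1}^{S_2})^{-1}(v_{S_2})$, where $v_{S_2}$ is the component of $v$ in $S_2$; equivalently $\psi|_{S_2}^{S_1} = (\phi|_{S_1}^{S_2})^{-1}$ and $\kernel\psi = {S_2}\c$, so $\psi$ is precisely the generalised inverse described before the proposition. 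Note that $\psi$ is linear, being the composition of the orthogonal projection onto $S_2$ with the inverse of the unitary restriction of $\phi$.

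The core verification is the following short computation. For $u \in H_1$, decompose $u = u_1 + u_1'$ with $u_1 \in S_1$ and $u_1' \in {S_1}\c = \kernel\phi$; for $v \in H_2$, decompose $v = v_2 + v_2'$ with $v_2 \in S_2$ and $v_2' \in {S_2}\c$. Then $\phi(u) = \phi(u_1) \in S_2$, so the $v_2'$-contribution drops and $\herm{\phi(u)}{v}_2 = \herm{\phi(u_1)}{v_2}_2$. Unitarity of $\phi|_{S_1}^{S_2}$ (cf.\ Proposition~\ref{prop:unitary-map}) turns this into $\herm{u_1}{\psi(v)}_1$, and since $\psi(v) \in S_1$ while $u_1' \in {S_1}\c$, this in turn equals $\herm{u}{\psi(v)}_1$. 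Hence~(\ref{fml:adjoint-of-linear-map}) holds for the pair $(\phi,\psi)$.

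To finish, I would appeal to Lemma~\ref{lem:dual-space} to identify, for each $v \in H_2$, the linear form $\rho = \herm{\cdot}{v}_2 \in {H_2}\adj$ with $v$, so that $\rho \circ \phi = \herm{\cdot}{\psi(v)}_1 \in {H_1}\adj$; this is exactly what Definition~\ref{def:adjoint-of-linear-map} requires for $\phi$ to be adjointable, with adjoint $\psi$. I do not anticipate a substantial obstacle here: all the work is packed into invoking orthomodularity of \emph{both} spaces, without which neither the direct-sum decomposition used to define $\psi$ on all of $H_2$ nor the decomposition used to split $u$ would be available; once both decompositions are in hand, the calculation is essentially bookkeeping.
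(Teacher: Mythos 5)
Your argument is correct, and all the essential ingredients coincide with the paper's: you take $S_1 = (\kernel\phi)\c$ and $S_2 = \image\phi$, use orthomodularity (Proposition~\ref{prop:orthomodular-space}) to obtain the splittings $H_1 = S_1 \oplus {S_1}\c$ and $H_2 = S_2 \oplus {S_2}\c$, and exploit the unitarity of $\phi|_{S_1}^{S_2}$. The difference lies in how adjointability is then extracted. The paper factorises $\phi = \iota_2 \circ \phi|_{S_1}^{S_2} \circ \sigma_1$ as a composition of three maps, each adjointable by Propositions~\ref{prop:unitary-map} and~\ref{prop:orthomodular-space}, and reads off $\phi\adj = \sigma_1\adj \circ (\phi|_{S_1}^{S_2})\adj \circ \iota_2\adj = \iota_1 \circ (\phi|_{S_1}^{S_2})^{-1} \circ \sigma_2$ by reversing the order of the adjoints; this is short and makes the generalised-inverse form of $\phi\adj$ visible at once, but it silently relies on the contravariance of taking adjoints under composition. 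You instead verify the defining identity $\herm{\phi(u)}{v}_2 = \herm{u}{\psi(v)}_1$ by a direct two-sided orthogonal decomposition of $u$ and $v$, which is more elementary and self-contained, at the cost of a little bookkeeping. One small point worth making explicit in your write-up: the identity ${S_1}\c = \kernel\phi$ that you use to kill the $u_1'$-term holds because $\kernel\phi = {S_1}\c$ with $S_1$ orthoclosed is part of Definition~\ref{def:partial-quasiisometry}, so ${S_1}\c = (\kernel\phi)\cc{}\c{} = \kernel\phi$; this is immediate but deserves a word. Your closing step, passing from the identity~(\ref{fml:adjoint-of-linear-map}) to adjointability in the sense of Definition~\ref{def:adjoint-of-linear-map} via Lemma~\ref{lem:dual-space}, is exactly the translation the paper records after that definition and is fine.
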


\begin{proof}
Let $S_1 = (\kernel \phi)\c$ and $S_2 = \image \phi$. Let $\iota_1 \colon S_1 \to H_1$ be the inclusion map and $\sigma_1 \colon H_1 \to S_1$ the projection map, and define $\iota_2$ and $\sigma_2$ similarly. By Propositions~\ref{prop:unitary-map} and~\ref{prop:orthomodular-space}, $\sigma_1$, $\phi|_{S_1}^{S_2}$, and $\iota_2$ are adjointable and we have $\phi\adj = (\iota_2 \circ \phi|_{S_1}^{S_2} \circ \sigma_1)\adj = \sigma_1\adj \circ (\phi|_{S_1}^{S_2})\adj \circ \iota_2\adj = \iota_1 \circ (\phi|_{S_1}^{S_2})^{-1} \circ \sigma_2$. This map is the generalised inverse of $\phi$.
\end{proof}

\begin{definition} \label{def:partial-orthometry}
Let $f \colon X \to Y$ be an adjointable map between Dacey spaces. We call $f$ a {\it partial orthometry} if there are subspaces $A$ of $X$ and $B$ of $Y$ such that $f$ establishes an orthoisomorphism between $A$ and $B$ and $f(x) = 0$ for $x \perp A$.

In this case, let $g \colon Y \to X$ be an adjointable map with the following property: $g$ establishes an orthoisomorphism between $B$ and $A$ such that $g|_B^A = (f|_A^B)^{-1}$, and $g(y) = 0$ for $y \perp B$. Then $g$ is called a {\it generalised inverse} of $f$.
\end{definition}

In case when $g$ is a generalised inverse of a partial orthometry $f$, it is obvious that then $g$ is likewise a partial orthometry and $f$ is a generalised inverse of $g$.

For an adjointable map $f \colon X \to Y$ between orthosets, we put $\zerokernel f = f\big|_{(\kernel f)\c}^{(\image f)\cc}$, called the {\it zero-kernel restriction} of $f$ \cite[Section 3]{PaVe4}.

Our next Proposition describes partial orthometries between irredundant Dacey spaces. We recall once more that, by Lemma~\ref{lem:unique-adjoint}, adjoints between irredundant orthosets are unique.

\begin{proposition} \label{prop:partial-orthometry-between-Dacey-spaces}
Let $f \colon X \to Y$ be a partial orthometry between irredundant Dacey spaces. Then the adjoint $f\adj$ of $f$ is the unique generalised inverse of $f$.

Moreover, let $A$ and $B$ be the subspaces of $X$ and $Y$ such that, in accordance with Definition~\ref{def:partial-orthometry}, $f|_A^B$ is an orthoisomorphism. Then $A = (\kernel f)\c = \image f\adj$ and $B = (\kernel f\adj)\c = \image f$. Moreover, the inclusion maps $\iota_A \colon A \to X$ and $\iota_B \colon B \to Y$ are adjointable; let $\sigma_A$ be the adjoint of $\iota_A$ and $\sigma_B$ the adjoint of $\iota_B$. We have $\zerokernel f = f|_A^B$ and $\zerokernel{f\adj} = f\adj|_B^A = (f|_A^B)^{-1}$, and
\begin{equation} \label{fml:partial-orthometry-between-Dacey-spaces}
f \;=\; \iota_B \circ \zerokernel f \circ \sigma_A,
\quad f\adj \;=\; \iota_A \circ \zerokernel{f\adj} \circ \sigma_B
\;=\; \iota_A \circ (\zerokernel f)^{-1} \circ \sigma_B.
\end{equation}
\end{proposition}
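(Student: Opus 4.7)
The proof will unfold in four stages: identifying the orthoclosed data, establishing a key orthoclosure identity, using the latter to verify $f\adj$ is a generalised inverse and to construct the adjoints of $\iota_A$ and $\iota_B$, and finally concluding uniqueness.

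First stage. Since $A\c \subseteq \kernel f$ by the definition of a partial orthometry while $A \cap \kernel f = \{0\}$ (because $f|_A^B$ is a bijection fixing $0$), the Dacey property of $X$ (Lemma~\ref{lem:Dacey-orthomodularity}, giving orthomodularity of $\mathsf{C}(X)$) together with the orthomodular law yields $\kernel f = A\c \vee (A \cap \kernel f) = A\c$, hence $(\kernel f)\c = A$. Likewise, the continuity of $f$ (Lemma~\ref{lem:continuity-of-adjointable-maps}) combined with the orthomodular identity $X = (A \cup A\c)\cc$ forces $\image f \subseteq (f(A) \cup f(A\c))\cc = B\cc = B$, and the reverse inclusion is immediate, so $\image f = B$.

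Second stage (the technical heart). I aim to prove the identity $f^{-1}(\{f(a)\}\c) = \{a\}\c$ for every $a \in A$. The inclusion $\supseteq$ follows because the orthoclosed preimage contains both $A\c$ (as $f(A\c) = \{0\}$) and $A \cap \{a\}\c$ (by the orthoisomorphism on $A$), and orthomodularity gives $\{a\}\c = A\c \vee (A \cap \{a\}\c)$ since $A\c \leq \{a\}\c$. For $\subseteq$, if the preimage strictly exceeded $\{a\}\c$, orthomodularity would produce a proper element in $\{a\}\cc \cap f^{-1}(\{f(a)\}\c)$; but $\{a\}\cc \subseteq A$, and the orthoisomorphism on $A$ would then force such an element to be orthogonal to $a$, placing it in $\{a\}\cc \cap \{a\}\c = \{0\}$, a contradiction. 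By irredundancy of $X$ this identity yields $f\adj(f(a)) = a$, whence $f\adj|_B^A = (f|_A^B)^{-1}$.

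Third stage. From the previous step $f\adj$ inverts $f|_A^B$ on $B$, and $f\adj$ vanishes on $B\c$ since every $y \in B\c$ is orthogonal to all of $\image f = B$. Applying the first-stage argument to $f\adj$ gives $\image f\adj \subseteq A$ and $\kernel f\adj = B\c$, so $f\adj$ is a generalised inverse and $A = \image f\adj$, $B = (\kernel f\adj)\c$. The map $\sigma_A := f\adj \circ f$, corestricted to $A$, is then an adjoint of $\iota_A$: the required identity $\{\sigma_A(x)\}\c \cap A = \{x\}\c \cap A$ collapses, via the adjointness $a \perp f\adj(f(x)) \Leftrightarrow f(a) \perp f(x)$, to the key identity applied at each $a \in A$. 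Symmetrically, $\sigma_B := f \circ f\adj$ corestricted to $B$ is an adjoint of $\iota_B$, and the factorisations~(\ref{fml:partial-orthometry-between-Dacey-spaces}) fall out from $f \circ f\adj \circ f = f$, which is immediate from $f\adj|_B^A = (f|_A^B)^{-1}$.

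Uniqueness. Given any generalised inverse $g$ with adjoint $h$, running stages 1--3 on the partial orthometry $g$ (with the roles of $A$ and $B$ swapped) gives $\image g = A$ and $h|_A = f|_A$. For $y \in Y$ and $a \in A$, adjointness yields $a \perp g(y) \Leftrightarrow h(a) \perp y \Leftrightarrow f(a) \perp y$, so $\{g(y)\}\c \cap A = (f|_A^B)^{-1}(\{y\}\c \cap B)$. Since $\iota_B$ has $\sigma_B$ as adjoint, $\{y\}\c \cap B = \{\sigma_B(y)\}\c \cap B$, and hence $\{g(y)\}\c \cap A = \{g(\sigma_B(y))\}\c \cap A$. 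Because $A$ inherits irredundancy from $X$ (by orthomodularity, $\{a\}\c = A\c \vee (\{a\}\c \cap A)$ is determined by its trace on $A$), this forces $g(y) = g(\sigma_B(y)) = (f|_A^B)^{-1}(\sigma_B(y)) = f\adj(y)$. The chief obstacle I anticipate is handling irredundancy with care in a general, possibly non-linear Dacey space, where $\{a\}\cc$ need not be an atom of $\mathsf{C}(X)$; nevertheless, the basic orthoset identity $\{a\}\cc \cap \{a\}\c = \{0\}$ is all the argument ever really needs.
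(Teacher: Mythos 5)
Your proof is correct and its overall architecture matches the paper's: identify $A$ and $B$ with $(\kernel f)\c$ and $\image f$, show that $f\adj$ inverts $f|_A^B$, take $\sigma_A = (f\adj \circ f)|^A$ as the adjoint of $\iota_A$, and read off uniqueness from the factorisation. The genuine difference is how the central fact $f\adj|_B^A = (f|_A^B)^{-1}$ is obtained. The paper imports it: it quotes \cite[Lemma 3.8]{PaVe4} for $\kernel f\adj = (\image f)\c$ and $\image f\adj \subseteq (\kernel f)\c$, takes for granted that $\zerokernel f$ and $\zerokernel{f\adj}$ form an adjoint pair, and then applies Proposition~\ref{prop:orthoisomorphism}. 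You instead prove the orthoclosure identity $f^{-1}(\{f(a)\}\c) = \{a\}\c$ for $a \in A$ directly from the orthomodular law in ${\mathsf C}(X)$ --- one inclusion via $\{a\}\c = A\c \vee (\{a\}\c \cap A)$, the other by extracting a proper element of $\{a\}\cc \cap f^{-1}(\{f(a)\}\c)$ which the orthoisomorphism on $A$ forces into $\{a\}\c \cap \{a\}\cc = \{0\}$ --- and then invoke irredundancy to get $f\adj(f(a)) = a$. This buys a self-contained argument that avoids the external zero-kernel machinery, at the cost of length; your uniqueness step via traces on $A$ is likewise more roundabout than the paper's one-line observation that the factorisation determines $g$ from $\zerokernel g$. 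Two small points you should make explicit: $\kernel f = f^{-1}(\{0\})$ is orthoclosed by Lemma~\ref{lem:continuity-of-adjointable-maps}(iii), which is needed before the orthomodular law may be applied to it, and the identity $X = (A \cup A\c)\cc$ is a general ortholattice fact rather than an instance of orthomodularity.
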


\begin{proof}
We have $A\c \subseteq \kernel f$ and $A \cap \kernel f = \{0\}$. As $X$ is a Dacey space, $(\kernel f)\c \subseteq A$ implies that $(\kernel f)\c$ is a subspace of $A$ and hence $(\kernel f)\c = ((\kernel f)\cc \cap A)\c \cap A = (\kernel f \cap A)\c \cap A = A$. Moreover, $\image f\adj \subseteq (\kernel f)\c = A$ and $\kernel f\adj = (\image f)\c \subseteq B\c$ by \cite[Lemma 3.8]{PaVe4}. As $y \in B\c$ means $y \perp f(A)$ and hence $f\adj(y) \perp A$, we have $y \in \kernel f\adj$, and we conclude $\kernel f\adj = B\c$, that is, $B = (\kernel f\adj)\c$. Also $B \subseteq \image f \subseteq (\kernel f\adj)\c = B$, that is, $\image f = B$.

It is clear now that $\zerokernel f = f|_A^B$. As $(\image f\adj)\cc = (\kernel f)\c = A$, we also have $\zerokernel{f\adj} = f\adj|_B^A$. The map $\zerokernel f$ is an orthoisomorphism whose adjoint is $\zerokernel{f\adj}$, hence by Proposition~\ref{prop:orthoisomorphism} $\zerokernel{f\adj} = (\zerokernel f)^{-1}$. In particular, $\image f\adj = A$, and $f\adj$ is a generalised inverse of $f$.

We further conclude that $\sigma_A = (f\adj \circ f)|^A$. Indeed, $(f\adj \circ f)|^A$ is the adjoint of $\iota_A$ because, for $x \in X$ and $y \in A$, we have $f\adj(f(x)) \perp y$ iff $x \perp f\adj(f(y)) = y = \iota_A(y)$. We calculate for $x \in X$
\[ \iota_B(\zerokernel f(\sigma_A(x)))
\;=\; f(\sigma_A(x))
\;=\; f(f\adj(f(x)))
\;=\; f(x). \]
Hence the first equation in~(\ref{fml:partial-orthometry-between-Dacey-spaces}) holds and the second is seen similarly.

The first equation in~(\ref{fml:partial-orthometry-between-Dacey-spaces}) shows that $f$ is uniquely determined by its zero-kernel restriction $\zerokernel f = f|_A^B$. It follows that the generalised inverse is unique as well.
\end{proof}

\begin{proposition} \label{prop:quasiisometry-induces-parial-orthometry}
Let $\phi \colon H_1 \to H_2$ be a partial quasiisometry between orthomodular spaces. Then $\P(\phi)$ is a partial orthometry.
\end{proposition}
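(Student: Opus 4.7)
The plan is to verify the defining properties of a partial orthometry in turn: adjointability of $\P(\phi)$, existence of suitable subspaces $A \subseteq \P(H_1)$ and $B \subseteq \P(H_2)$ on which $\P(\phi)$ restricts to an orthoisomorphism, and the vanishing condition on the orthocomplement of $A$. Throughout we may use that, by Proposition~\ref{prop:orthomodular-space-Dacey-space}, both $\P(H_1)$ and $\P(H_2)$ are Dacey spaces, so that the very notion of a partial orthometry applies.

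The first step is to construct the generalised inverse $\psi \colon H_2 \to H_1$ of $\phi$ in the quasilinear sense: set $\kernel \psi = {S_2}\c$ and $\psi|_{S_2}^{S_1} = (\phi|_{S_1}^{S_2})^{-1}$, extending by orthomodularity of $H_2$ via the decomposition $H_2 = S_2 \oplus {S_2}\c$ (which is available by Proposition~\ref{prop:orthomodular-space}). Because $\phi|_{S_1}^{S_2}$ is quasiunitary with isomorphism $\sigma$ and factor $\lambda$, its inverse is again quasiunitary (with $\sigma^{-1}$ and $(\lambda^{-1})^{\sigma^{-1}}$, say), and in particular quasilinear. Hence $\psi$ is a quasilinear map.

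Next, I would show that $\P(\phi)$ and $\P(\psi)$ form an adjoint pair. Given $u \in H_1$ and $v \in H_2$, decompose $u = u_1 + u_1'$ with $u_1 \in S_1$, $u_1' \in {S_1}\c$, and $v = v_2 + v_2'$ with $v_2 \in S_2$, $v_2' \in {S_2}\c$. Then $\phi(u) = \phi(u_1) \in S_2$ and $\psi(v) = \psi(v_2) \in S_1$, so
\[
\herm{\phi(u)}{v}_2 \;=\; \herm{\phi(u_1)}{v_2}_2 \quad\text{and}\quad \herm{u}{\psi(v)}_1 \;=\; \herm{u_1}{\psi(v_2)}_1.
\]
Writing $v_2 = \phi(\psi(v_2))$ and applying the quasiunitarity of $\phi|_{S_1}^{S_2}$ gives $\herm{\phi(u_1)}{v_2}_2 = \herm{u_1}{\psi(v_2)}_1^\sigma \, \lambda$, whence $\phi(u) \perp v$ if and only if $u \perp \psi(v)$. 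Passing to rays, this means $\P(\phi)(\lin u) \perp \lin v$ iff $\lin u \perp \P(\psi)(\lin v)$, so $\P(\phi)$ is adjointable.

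Finally, I would take $A = \P(S_1)$ and $B = \P(S_2)$, which are orthoclosed in $\P(H_1)$ and $\P(H_2)$ because $S_1$ and $S_2$ are orthoclosed in $H_1$ and $H_2$. Since $\phi|_{S_1}^{S_2}$ is quasiunitary, Proposition~\ref{prop:quasiunitary-map-induces-orthoiso} yields that $\P(\phi|_{S_1}^{S_2}) = \P(\phi)|_A^B$ is an orthoisomorphism between $A$ and $B$. For $\lin x \perp A$, we have $x \in {S_1}\c = \kernel \phi$, so $\P(\phi)(\lin x) = \lin{0}$. This completes the verification that $\P(\phi)$ is a partial orthometry. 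The only mildly delicate point is the symbolic computation in the adjointability step, where one has to be careful that the $\sigma$-twist and the factor $\lambda$ both vanish harmlessly under the condition $\alpha = 0$; no other substantive obstacle is anticipated.
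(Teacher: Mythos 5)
Your proof is correct and follows essentially the same route as the paper: you construct the generalised inverse $\psi = (\phi|_{S_1}^{S_2})^{-1} \circ \sigma_{S_2}$, verify via the orthogonal decompositions and the quasiunitarity of $\phi|_{S_1}^{S_2}$ that $\P(\phi)$ and $\P(\psi)$ form an adjoint pair, and then conclude with Proposition~\ref{prop:quasiunitary-map-induces-orthoiso}. You merely spell out the inner-product computation (the $\sigma$-twist and the factor $\lambda$) that the paper leaves implicit in its chain of equivalences.
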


\begin{proof}
Let $S_1$ and $S_2$ be the orthoclosed subspaces of $H_1$ and $H_2$ such that, in accordance with Definition~\ref{def:partial-quasiisometry}, $\phi|_{S_1}^{S_2}$ is quasiunitary. Let $\sigma_{S_1}$ and $\sigma_{S_2}$ be the projection maps onto $S_1$ and $S_2$. Define $\psi \colon H_2 \to H_1 \komma v \mapsto (\phi|_{S_1}^{S_2})^{-1}(\sigma_{S_2}(v))$. For any $u \in H_1$ and $v \in H_2$, we then have that $\phi(u) \perp v$ if and only if $\phi(\sigma_{S_1}(u)) \perp \sigma_{S_2}(v)$ if and only if $\sigma_{S_1}(u) \perp (\phi|_{S_1}^{S_2})^{-1}(\sigma_{S_2}(v))$ if and only if $u \perp \psi(v)$. Hence $\P(\psi)$ is an adjoint of $\P(\phi)$.

In particular, $\P(\phi)$ is adjointable. Now the assertion is clear by Proposition~\ref{prop:quasiunitary-map-induces-orthoiso}.
\end{proof}

\begin{theorem} \label{thm:Wigner-iso-for-partial-orthometries}
Let $H_1$ and $H_2$ be orthomodular spaces and let $f \colon \P(H_1) \to \P(H_2)$ be a partial orthometry such that $(\kernel f)\c$ has rank $\geq 3$. Then $f$ is induced by a partial quasiisometry from $H_1$ to $H_2$.
\end{theorem}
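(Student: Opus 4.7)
The plan is to use the decomposition of $f$ provided by Proposition~\ref{prop:partial-orthometry-between-Dacey-spaces}, apply Wigner's Theorem for Hermitian spaces (Theorem~\ref{thm:Wigner-iso}) on the ``essential'' part of $f$, and then use the splitting property of orthomodular spaces to extend the resulting quasiunitary map to the whole of $H_1$.

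First, note that by Theorem~\ref{thm:orthomodular-spaces-Dacey-spaces} both $\P(H_1)$ and $\P(H_2)$ are linear Dacey spaces, and they are irredundant. Hence Proposition~\ref{prop:partial-orthometry-between-Dacey-spaces} applies: putting $A = (\kernel f)\c$ and $B = (\kernel f\adj)\c = \image f$, there are orthoclosed subspaces $S_1$ of $H_1$ and $S_2$ of $H_2$ with $A = \P(S_1)$, $B = \P(S_2)$, such that $\zerokernel f = f|_A^B$ is an orthoisomorphism between $\P(S_1)$ and $\P(S_2)$, and $f = \iota_B \circ \zerokernel f \circ \sigma_A$, where $\iota_B$ is the inclusion $\P(S_2) \to \P(H_2)$ and $\sigma_A$ is the adjoint of the inclusion $\P(S_1) \to \P(H_1)$.

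Since the rank of $A$ equals the dimension of $S_1$ and this is $\geq 3$, and an orthoisomorphism between orthosets preserves rank, also $\dim S_2 \geq 3$. Hence Theorem~\ref{thm:Wigner-iso} supplies a quasiunitary map $\phi_0 \colon S_1 \to S_2$ such that $\P(\phi_0) = \zerokernel f$. Since $H_1$ is orthomodular and $S_1$ is orthoclosed, $S_1$ is splitting, so there is an orthogonal projection $\pi_{S_1} \colon H_1 \to S_1$; let $\iota_{S_2} \colon S_2 \to H_2$ be the inclusion. Define
\[
\phi \;=\; \iota_{S_2} \circ \phi_0 \circ \pi_{S_1} \colon H_1 \to H_2.
\]
Then $\phi$ is quasilinear with the same associated sfield isomorphism as $\phi_0$, we have $\kernel \phi = {S_1}\c$, $\image \phi = S_2$, and $\phi|_{S_1}^{S_2} = \phi_0$ is quasiunitary. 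Thus $\phi$ is a partial quasiisometry in the sense of Definition~\ref{def:partial-quasiisometry}.

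It remains to verify that $\P(\phi) = f$. By Proposition~\ref{prop:adjointable-linear-map} and Lemma~\ref{lem:inclusion-of-finite-dimensional-subspaces} (and its infinite-dimensional analogue given by Proposition~\ref{prop:orthomodular-space}), the inclusion map $\iota_{S_2}$ induces $\iota_B$ at the level of orthosets, while the projection $\pi_{S_1}$ induces the adjoint of the orthoset inclusion, namely $\sigma_A$; indeed, by irredundancy of $\P(H_1)$ (Lemma~\ref{lem:unique-adjoint}) these adjoints are unique. Hence
\[
\P(\phi) \;=\; \P(\iota_{S_2}) \circ \P(\phi_0) \circ \P(\pi_{S_1})
         \;=\; \iota_B \circ \zerokernel f \circ \sigma_A
         \;=\; f,
\]
as required. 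The only subtle point is the identification of the orthoset-level inclusion and projection with their linear counterparts, but this is immediate once one notes that adjoints are unique in irredundant orthosets; no serious obstacle is expected.
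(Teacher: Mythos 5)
Your proof is correct and follows essentially the same route as the paper's: decompose $f$ via Proposition~\ref{prop:partial-orthometry-between-Dacey-spaces} as $\iota_B \circ \zerokernel f \circ \sigma_A$, apply Theorem~\ref{thm:Wigner-iso} to the zero-kernel restriction to obtain a quasiunitary $\phi_0 \colon S_1 \to S_2$, and assemble the partial quasiisometry $\iota_{S_2} \circ \phi_0 \circ \pi_{S_1}$, identifying the orthoset-level inclusion and projection with their linear counterparts via Proposition~\ref{prop:adjointable-linear-map} and uniqueness of adjoints. The only (harmless) addition is your explicit check that $\dim S_2 \geq 3$, which the paper leaves implicit.
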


\begin{proof}
Let $S_1$ and $S_2$ be the orthoclosed subspaces of $H_1$ and $H_2$, respectively, such that $\kernel f = \P(S_1)\c$, $\image f = \P(S_2)$, and $\zerokernel f = f|_{\P(S_1)}^{\P(S_2)}$ is an orthoisomorphism. By Proposition~\ref{prop:partial-orthometry-between-Dacey-spaces}, $f = \iota_{\P(S_2)} \circ \zerokernel f \circ \sigma_{\P(S_1)}$, where $\iota_{\P(S_2)} \colon \P(S_2) \to \P(H_2)$ is the inclusion map and $\sigma_{\P(S_1)} \colon \P(H_1) \to \P(S_1)$ is the adjoint of the inclusion map $\iota_{\P(S_1)} \colon \P(S_1) \to \P(H_1)$. By Theorem~\ref{thm:Wigner-iso}, there is a quasiunitary map $\hat \phi \colon S_1 \to S_2$ such that $\zerokernel f = \P(\hat \phi)$. Let $\iota_{S_1} \colon S_1 \to H_1$ and $\iota_{S_2} \colon S_2 \to H_2$ be the inclusion maps, with adjoints $\sigma_{S_1}$ and $\sigma_{S_2}$, respectively. We clearly have $\iota_{\P(S_1)} = \P(\iota_{S_1})$ and $\iota_{\P(S_2)} = \P(\iota_{S_2})$. Moreover, by Proposition~\ref{prop:adjointable-linear-map}, $\sigma_{\P(S_1)} = \P(\sigma_{S_1})$. We conclude that $f$ is induced by the partial quasiisometry $\iota_{S_2} \circ \hat \phi \circ \sigma_{S_1}$.
\end{proof}

One more time, we may make a partial quasiisometry into a partial isometry.

\begin{lemma} \label{lem:quasiisometry-to-isometry}
Let $H_1$ and $H_2$ be Hermitian spaces over the sfields $F_1$ and $F_2$, respectively. Let $\phi \colon H_1 \to H_2$ be a partial quasiisometry and let $f = \P(\phi)$. Then there is a Hermitian space $H_2'$ over $F_1$ and an orthoisomorphism $t \colon \P(H_2) \to \P(H_2')$ such that $t \circ f$ is induced by a partial isometry from $H_1$ to $H_2'$.
\end{lemma}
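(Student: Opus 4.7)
The plan is to mimic the construction in Lemma~\ref{lem:quasiunitary-to-unitary}, now applied to the restriction $\phi|_{S_1}^{S_2}$, while keeping the underlying set of $H_2$ fixed so that $\tau \circ \phi$ ends up being defined on all of $H_1$.  If $S_1 = \{0\}$ then $\phi$ is the zero map and there is nothing to do, so I assume $S_1 \neq \{0\}$.  Let $\sigma \colon F_1 \to F_2$ be the isomorphism associated with the quasilinear map $\phi$; this is also the isomorphism associated with $\phi|_{S_1}^{S_2}$.  Since $\phi|_{S_1}^{S_2}$ is quasiunitary, there is a unique $\lambda \in F_2 \setminus \{0\}$ with $\herm{\phi(u)}{\phi(v)}_2 = \herm{u}{v}_1^\sigma \, \lambda$ for all $u, v \in S_1$, and by Remark~\ref{rem:quasiunitary} we have $\lambda^{\ast_2} = \lambda$.

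Now I copy the construction of the proof of Lemma~\ref{lem:quasiunitary-to-unitary} verbatim, but applied to all of $H_2$: define $H_2'$ to be the linear space over $F_1$ whose underlying abelian group is that of $H_2$ and whose scalar action is $\alpha \scmu u = \alpha^\sigma u$, equip $F_1$ with the involution $\alpha^{\ast_2'} = \lambda^{\sigma^{-1}} \alpha^{\sigma \ast_2 \sigma^{-1}} (\lambda^{\sigma^{-1}})^{-1}$, and put $\herm{u}{v}_2' = (\herm{u}{v}_2 \, \lambda^{-1})^{\sigma^{-1}}$ for $u, v \in H_2$.  The calculation performed in Lemma~\ref{lem:quasiunitary-to-unitary} shows that $\herm{\cdot}{\cdot}_2'$ is a Hermitian form on $H_2'$ with respect to $\ast_2'$.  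Let $\tau \colon H_2 \to H_2'$ be the set identity; it is $\sigma^{-1}$-linear and preserves addition, and clearly $t = \P(\tau)$ is a bijection between $\P(H_2)$ and $\P(H_2')$.  Since $\herm{u}{v}_2' = 0$ iff $\herm{u}{v}_2 = 0$, the relation $\perp$ on $H_2$ and on $H_2'$ agree, so $t$ is an orthoisomorphism.

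Next I verify that $\tau \circ \phi \colon H_1 \to H_2'$ is a partial isometry.  As the composition of a $\sigma$-linear and a $\sigma^{-1}$-linear map, $\tau \circ \phi$ is $F_1$-linear.  Its set-theoretic kernel and image coincide with those of $\phi$; in particular $\kernel(\tau \circ \phi) = S_1\c$ (in $H_1$) and $\image(\tau \circ \phi) = S_2$ (as a subset of $H_2'$).  Because $H_2$ and $H_2'$ have the same abelian group and a matched scalar action, the subspaces of $H_2$ and $H_2'$ coincide setwise, and because $\perp$ coincides, the orthoclosed subspaces coincide as well; in particular $S_2$ is an orthoclosed subspace of $H_2'$, and $H_2'$ inherits orthomodularity from $H_2$ (one can also read this off $\P(H_2) \cong \P(H_2')$ via Proposition~\ref{prop:orthomodular-space-Dacey-space}).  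Finally, for $u, v \in S_1$, the defining relation of $\lambda$ gives
\[
\herm{(\tau\circ\phi)(u)}{(\tau\circ\phi)(v)}_2'
= (\herm{\phi(u)}{\phi(v)}_2 \, \lambda^{-1})^{\sigma^{-1}}
= (\herm{u}{v}_1^\sigma)^{\sigma^{-1}}
= \herm{u}{v}_1,
\]
so $(\tau \circ \phi)|_{S_1}^{S_2}$ is unitary.  Hence $\tau \circ \phi$ is a partial isometry, and $t \circ f = \P(\tau) \circ \P(\phi) = \P(\tau \circ \phi)$ is induced by it, as required.

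I do not expect any real obstacle: the sesquilinearity verification is the one already carried out in Lemma~\ref{lem:quasiunitary-to-unitary}, and everything else is bookkeeping about what happens to kernels, images, and orthoclosed subspaces under the scalar relabelling $\tau$.  The one point worth being careful about is that the choice of $\lambda$ must come from the quasiunitary \emph{restriction} $\phi|_{S_1}^{S_2}$ and not from $\phi$ itself (for which no such $\lambda$ need exist), and that $\lambda = \lambda^{\ast_2}$ really does hold so that the definition of $\ast_2'$ yields an involution.
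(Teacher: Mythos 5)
Your proposal is correct and follows essentially the same route as the paper: apply the scalar-relabelling construction of Lemma~\ref{lem:quasiunitary-to-unitary} (with $\sigma$ and the $\lambda$ coming from the quasiunitary restriction $\phi|_{S_1}^{S_2}$) to all of $H_2$, and then check that kernels, images, orthoclosed subspaces, and the unitarity of the restriction are preserved under $\tau$. The extra care you take about where $\lambda$ comes from and about $\lambda^{\ast_2}=\lambda$ is exactly the point the paper leaves implicit.
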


\begin{proof}
Let $S_1 = (\kernel \phi)\c$ and $S_2 = \image \phi$. Then $S_1$ and $S_2$ are orthoclosed subspaces and $\phi|_{S_1}^{S_2}$ is quasiunitary. Let $H_2'$ be the Hermitian space over $F_1$ constructed from $H_2$ as in the proof of Lemma~\ref{lem:quasiunitary-to-unitary} and let $\tau \colon H_2 \to H_2' \komma u \mapsto u$. Put $S_2' = \tau(S_2)$. Then $\tau \circ \phi$ is linear and $(\tau \circ \phi)|_{S_1}^{S_2'}$ is unitary. Moreover, $S_1 = \kernel (\tau \circ \phi)\c$ and $S_2' = \image (\tau \circ \phi)$ are orthoclosed subspaces. We conclude that $\tau \circ \phi$ is a partial isometry.
\end{proof}

\begin{lemma} \label{lem:Fundamental-Theorem-for-isometry}
Assume that $S$ is an at least $3$-dimensional subspace of the orthomodular space $H$. Let $f \colon \P(H) \to \P(H)$ be a partial orthometry such that $f|_{\P(S)} = \id_{\P(S)}$. Then $f$ is induced by a unique partial isometry $\phi \colon H_1 \to H_2$ such that $\phi|_S = \id_S$.
\end{lemma}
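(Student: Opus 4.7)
The plan is to mimic the proof of Corollary~\ref{cor:Wigner-auto}, adapting it to the partial setting by first invoking Theorem~\ref{thm:Wigner-iso-for-partial-orthometries} and then rescaling. Since $f|_{\P(S)} = \id_{\P(S)}$ and $\dim S \geq 3$, the closed subspace $(\kernel f)\c$ contains $\P(S)$ and therefore has rank $\geq 3$. Theorem~\ref{thm:Wigner-iso-for-partial-orthometries} then yields a partial quasiisometry $\psi \colon H \to H$ with $f = \P(\psi)$. Denote by $S_1 = (\kernel \psi)\c$ and $S_2 = \image \psi$ the orthoclosed subspaces on which $\psi|_{S_1}^{S_2}$ is quasiunitary; in particular $S \subseteq S_1$, and since $\P(\psi)$ fixes $\P(S)$ pointwise, $\psi(S) \subseteq S$, so $\psi|_S$ is a quasilinear self-map of $S$ whose image spans $S$.

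Next I would apply Lemma~\ref{lem:uniqueness-of-semilinear-maps} to the two quasilinear maps $\psi|_S, \id_S \colon S \to S$ (both of rank $\geq 3 \geq 2$) to obtain $\kappa \in F \setminus \{0\}$ with $\psi|_S = \kappa \, \id_S$. Set $\phi = \kappa^{-1} \psi$; then $\P(\phi) = \P(\psi) = f$ and $\phi|_S = \id_S$, while by Lemma~\ref{lem:uniqueness-of-quasilinear-maps} the restriction $\phi|_{S_1}^{S_2}$ remains quasiunitary, so $\phi$ is a partial quasiisometry. The key remaining point is to verify that $\phi$ is in fact a partial isometry. Let $\sigma$ be the sfield isomorphism associated with $\phi$ and let $\lambda \in F \setminus \{0\}$ be such that $\herm{\phi(u)}{\phi(v)} = \herm u v^\sigma \lambda$ for all $u, v \in S_1$. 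Applying $\phi(\alpha e) = \alpha^\sigma \phi(e)$ to a non-zero $e \in S$ and using $\phi|_S = \id_S$ gives $\alpha = \alpha^\sigma$ for every $\alpha \in F$, hence $\sigma = \id_F$ and $\phi$ is linear; choosing any $u \in S$ with $\herm u u \neq 0$ and again using $\phi|_S = \id_S$ yields $\herm u u = \herm u u \, \lambda$, so $\lambda = 1$. Thus $\phi|_{S_1}^{S_2}$ is unitary and $\phi$ is a partial isometry.

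For uniqueness, if $\phi' \colon H \to H$ is any partial isometry inducing $f$ with $\phi'|_S = \id_S$, then $\phi$ and $\phi'$ are semilinear maps of rank $\geq 3$ with $\P(\phi) = \P(\phi')$, so Lemma~\ref{lem:uniqueness-of-semilinear-maps} provides $\mu \in F \setminus \{0\}$ with $\phi' = \mu \phi$; evaluating at any non-zero $u \in S$ forces $\mu = 1$, hence $\phi' = \phi$. The only conceptually non-routine step is the third-paragraph verification that the rescaling by $\kappa^{-1}$, which is governed by the projective data only, is strong enough to fix the associated isomorphism and the scalar $\lambda$ simultaneously; this is exactly where the assumption $\dim S \geq 3$ is used twice, once to guarantee rank $\geq 2$ for applying Lemma~\ref{lem:uniqueness-of-semilinear-maps}, and once (implicitly, since $\dim S \geq 1$ suffices here) to supply a vector with $\herm u u \neq 0$ inside $S$.
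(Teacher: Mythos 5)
Your proposal follows the paper's route exactly: invoke Theorem~\ref{thm:Wigner-iso-for-partial-orthometries}, rescale by a scalar as in Corollary~\ref{cor:Wigner-auto}, and then argue that the rescaled map is a partial isometry. The genuine gap is the opening claim that ``the closed subspace $(\kernel f)\c$ contains $\P(S)$''. The hypothesis $f|_{\P(S)} = \id_{\P(S)}$ only gives $\P(S) = f(\P(S)) \subseteq \image f = B$, i.e.\ $S$ sits inside the \emph{image} side of the orthometry, not inside $A = (\kernel f)\c$. (For the rank hypothesis of Theorem~\ref{thm:Wigner-iso-for-partial-orthometries} this is harmless: $A$ and $B$ are orthoisomorphic and $B \supseteq \P(S)$ already has rank $\geq 3$.) But your decisive later step, deducing $\lambda = 1$ from $\herm{\phi(u)}{\phi(u)} = \herm{u}{u}^\sigma\lambda$ for $u \in S$, is only available when $u \in S_1 = (\kernel \phi)\c$, since quasiunitarity is asserted only for the restriction $\phi|_{S_1}^{S_2}$. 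If $S \not\subseteq S_1$, that step fails.

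And $S \subseteq (\kernel f)\c$ can genuinely fail. Take $H = \Reals^6$, $T = \lin{e_1,e_2,e_3}$, $s_i = e_i\cos\theta + e_{i+3}\sin\theta$ for $0<\theta<\pi/2$, $S = \lin{s_1,s_2,s_3}$, and $\phi = \iota_S \circ g \circ \pi_T$, where $g \colon T \to S$, $e_i \mapsto s_i$, is unitary and $\pi_T$ is the orthogonal projection. Then $\phi$ is a partial isometry, $f = \P(\phi)$ is a partial orthometry with $(\kernel f)\c = \P(T)$, and $\phi(u) = (\cos\theta)\, u$ for every $u \in S$, so $f|_{\P(S)} = \id_{\P(S)}$ --- yet $S \cap T = \{0\}$. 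Worse, by Lemma~\ref{lem:uniqueness-of-semilinear-maps} every semilinear map inducing $f$ has the form $\mu\phi$; being a partial isometry forces $\mu = \pm 1$, while restricting to $\id_S$ forces $\mu = 1/\cos\theta$, so no map satisfies both. Hence the lemma needs the additional hypothesis $\P(S) \subseteq (\kernel f)\c$ (as in Corollary~\ref{cor:Wigner-auto}, where it holds automatically); to be fair, the paper's own proof silently assumes the same thing at the step ``$\phi|_S$ is unitary and we conclude that $\phi$ is a partial isometry''. With that hypothesis added, your argument --- including the $\sigma = \id$, $\lambda = 1$ computation and the uniqueness part --- goes through as written.
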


\begin{proof}
By Theorem~\ref{thm:Wigner-iso-for-partial-orthometries}, there is a partial quasiisometry $\psi \colon H \to H$ such that $f = \P(\psi)$. As in the proof of Corollary~\ref{cor:Wigner-auto}, we see that there is a $\kappa \neq 0$ such that $\phi = \kappa^{-1} \psi$ is on $S$ the identity. Then $\phi|_S$ is unitary and we conclude that $\phi$ is a partial isometry.
\end{proof}

\subsubsection*{Acknowledgements}

This research was funded in part by the Austrian Science Fund (FWF) 10.55776/ \linebreak PIN5424624 and the Czech Science Foundation (GACR) 25-20013L.

\end{document}